\theoremstyle{plain}
\newtheorem{theorem}{Theorem}[section]
\newtheorem*{theorem*}{Theorem}
\newtheorem*{"theorem"}{``Theorem''}
\newtheorem{corollary}[theorem]{Corollary}
\newtheorem{lemma}[theorem]{Lemma}
\theoremstyle{definition}
\theoremstyle{remark}
\newtheorem{remark}[theorem]{Remark}
\newtheorem{example}[theorem]{Example}
\numberwithin{equation}{section}
\newenvironment{pde}{\left\{\begin{array}{rll} } {\end{array}\right.}
\newcommand{\R}{\mathbb R}
\newcommand{\dist}{{\rm dist}}
\newcommand{\sdist}{{\mathrm{sdist}}}
\newcommand{\sign}{\mathrm{sign}}
\newcommand{\Per}{\mathrm{Per}}
\newcommand{\LRa} {\Leftrightarrow}
\newcommand{\Ra} {\Rightarrow}
\renewcommand{\d}{\mathrm{d}}
\newcommand{\dx}{\,\mathrm{d}x}
\newcommand{\dz}{\,\mathrm{d}z}
\newcommand{\ds}{\,\mathrm{d}s}
\newcommand{\dt}{\,\mathrm{d}t}
\newcommand{\dr}{\,\mathrm{d}r}
\newcommand{\cc}{\Subset}
\newcommand{\eps}{\varepsilon}
\newcommand{\average}{{\mathchoice {\kern1ex\vcenter{\hrule height.4pt
width 6pt depth0pt} \kern-9.7pt} {\kern1ex\vcenter{\hrule
height.4pt width 4.3pt depth0pt} \kern-7pt} {} {} }}
\DeclareMathOperator*{\argmin}{argmin}
\begin{document}

\title[Convex-concave splitting and slow movement]{Convex-concave splitting for the Allen-Cahn equation\\ leads to $\varepsilon^2$-slow movement of interfaces}

\author{Patrick Dondl}
\address{
Patrick Dondl\\
Abteilung für Angewandte Mathematik,
Albert-Ludwigs-Universit\"at Freiburg,
Hermann-Herder-Straße 10,
79104 Freiburg i.\ Br., 
Germany 
}
\email{patrick.dondl@mathematik.uni-freiburg.de}

\author{Akwum Onwunta}
\address{
Akwum Onwunta\\
Industrial and Systems Engineering,
Lehigh University,
200 West Packer Avenue,
Bethlehem, PA 18015, USA
}
\email{ako221@lehigh.edu}

\author{Ludwig Striet}
\address{
Ludwig Striet\\
Abteilung für Angewandte Mathematik,
Albert-Ludwigs-Universit\"at Freiburg,
Hermann-Herder-Straße 10,
79104 Freiburg i.\ Br., 
Germany 
}
\email{ludwig.striet@mathematik.uni-freiburg.de}

\author{Stephan Wojtowytsch}
\address{Stephan Wojtowytsch\\
Department of Mathematics\\
University of Pittsburgh\\
Pittsburgh, PA 15213}
\email{s.woj@pitt.edu}

\date{\today}

\subjclass[2020]{
65M12, 
35A35, 
49Q05
}
\keywords{Allen-Cahn equation, MBO scheme, convex-concave splitting, overstabilizing}

\begin{abstract}
The convex-concave splitting discretization of the Allen-Cahn is easy to implement and guaranteed to be energy decreasing even for large time-steps. We analyze the time-stepping scheme for a large class of potentials which includes the standard potential as well as two extreme settings: Potentials with quadratic convex part (uniform positive curvature), and potentials which are concave between the potential wells and either linear or infinite outside (highly concentrated curvature). In all three scenarios, the `effective time step size' of the scheme scales with the {\em square} of the small parameter $\varepsilon$ governing the width of transition layers. A weaker `slow motion' result is proved under much more general assumptions. Thus, stability is achieved by effectively `freezing' the interfaces in place. The time step limitation is not geometric in origin, but depends on the phase-field parameter $\varepsilon$. Along the way, we establish a new link between an Allen-Cahn type equation and a thresholding approximation of mean curvature flow.
\end{abstract}

\maketitle


\section{Introduction}

The Allen-Cahn equation
\[
\partial_t u = \Delta u - \frac{W'(u)}{\eps^2}, \qquad W(u) = \frac{(u^2-1)^2}4, \qquad W'(u) = u^3-u
\]
arises as the time-normalized $L^2$-gradient flow of the Modica-Mortola functional
\[
E_\eps : H^1(\Omega) \cap L^4(\Omega)\to [0,\infty), \qquad E_\eps (u) = \int_\Omega \frac\eps2\,\|\nabla u\|^2 + \frac{W(u)}\eps\dx.
\]
The functionals $E_\eps$ converge to the $c_W = \int_{-1}^1\sqrt{2\,W(z)}\dz$-fold of the perimeter functional in the sense of $\Gamma$-convergence as $\eps\to 0^+$, or to the perimeter relative to $\Omega$, depending on what boundary conditions are chosen for $u$ \cite{modica1977esempio, modica1987gradient}. Solutions $u$ to the Allen-Cahn equation converge to functions $\chi_V - \chi_{V^c}$ as $\eps\to 0^+$ where $V = \{V_t\}_{t\in [0,\infty}$ is a family of sets $V_t\subseteq\Omega$ whose boundaries evolve by mean curvature flow, the gradient flow of the perimeter functional \cite{bronsard1991motion, ilmanen1993convergence, laux2018convergence, fischer2020convergence}. In this article, we study the numerical solution of the Allen-Cahn equation by the popular `convex-concave splitting' scheme.

The convergence results stated above for the `classical' or `standard' double-well potential $W(u) = (u^2-1)^2/4$ are valid in much greater generality. Typical requirements for the convergence of perimeter-functionals are that $W$ is continuous, even, and $W(u)>0$ unless $u\in\{-1,1\}$. For compactness, it is convenient to prescribe an asymptotic growth condition at $\pm \infty$. To study evolution problems, smoothness of $W$ is generally assumed. 

For finer control of the functionals, it is necessary to consider the behavior of $W$ at the potential wells $u = \pm 1$. Since $W\geq 0$ and $W(u) = 0$ if and only if $u\in \{-1,1\}$, we automatically find that $W'(\pm 1) = 0$ (if $W\in C^1$) and $W''(\pm 1)\geq 0$ (if $W\in C^2$). The optimal transition profile $\phi$ between the potential wells is the solution of the ODE
\[
\phi'' = W'(\phi), \qquad \lim_{x\to\pm \infty} \phi(x) = \pm 1, \qquad \phi(0) = 0
\]
We note that
\[
\frac{d}{dx} \left(\frac{(\phi')^2}2 - W(\phi)\right) = \big(\phi'' - W'(\phi)\big)\phi' = 0\qquad \Ra \quad \phi' \equiv \sqrt{2\,W(\phi)}
\]
since $\phi'(\pm\infty) = W(\phi(\pm\infty)) = 0$. This equation can alternatively be used to analyze existence, uniqueness, and fine properties of $\phi$. In particular, a unique monotone solution $\phi$ of the equation $\phi'= \sqrt{2W(\phi)}$ exists with $\phi(0) = 0$ since $W>0$ inside $(-1,1)$. If $W''(1) >0$, then $(1-\phi)'' = -W'(\phi) \approx W''(1)\,(1-\phi)$ implies that $\phi(t) \sim 1 - C\,\exp(-\sqrt{W''(0)}\, t)$ as $t\to\infty$, i.e.\ $\phi$ approaches the potential wells exponentially fast. If on the other hand $W(\phi) \sim (1-|\phi|)^\gamma$ for $\gamma<2$ at $\pm 1$, then $\phi$ transitions from $-1$ to $1$ on a finite segment of the real line. Non-smooth potentials with this behavior behave advantageously in certain applications certain applications \cite{bosch2018generalizing}.

In this note we consider efficient time-discretizations of the Allen-Cahn equations for double-well potentials fashioned after 
\[
W(u) = |u|^p + W_{conc}(u) 
\]
where $p\geq 2$ and $W_{conc}$ is a concave function such that $W(u)> 0$ unless $u\in \{-1,1\}$ and $W(\pm 1)=0$. The exact conditions allow for slightly more generality. We pay special attention to these three cases:

\begin{enumerate}
\item Potentials with quadratic convex part, i.e.\ we assume that $W(u) = u^2 + W_{conc}(u)$ where we assume that $W_{conc}$ is concave (and smooth enough for our purposes).

\item The standard potential $W(u) = (u^2-1)^2 = u^4 + 1-2u^2$ with $p=4$ and $W_{conc}(u) = 1-2u^2$. This is the most common potential (up to a factor of $1/4$, which is occasionally present in the literature). The choice of $W$ in place of $W/4$ corresponds to replacing $\eps$ by $2\eps$.

\item Barrier potentials of the form $W(u) = \infty\cdot 1_{\{|u|>1\}} + W_{conc}(u)$, where again $W_{conc}$ is assumed to be concave. This special case arises as the formal limit $p\to \infty$ and choosing the lower semi-continuous envelope with $W(\pm 1) = 0$ in order to guarantee the existence of minimizers. 
\end{enumerate}

The first class of potentials is particularly convenient as many statements simplify compared to the standard potential which grows as $u^4$ at $\pm \infty$ -- for instance, its associated energy $E_\eps$ is defined and finite on the whole space $H^1$. Both the first family and the standard potential fall into the category where $W'(\pm 1) = 0$, $W''(\pm 1) > 0$, while potentials in the final family satisfy $W(u) \geq c\,\big|1-|u|\big|$ for some $c>0$. As prototypical examples, we can consider
\[
W(u) = \big(|u|-1\big)^2 = u^2 + \underbrace{1- 2\,|u|}_{=:W_{conc}(u)}\qquad\text{and}\quad W(u) = \infty \cdot 1_{\{|u|>1\}} + \underbrace{1-|u|}_{=: W_{conc}(u)} = \begin{cases} 1-|u| & |u|\leq 1\\ +\infty &\text{else}\end{cases}
\]
or approximations which are smooth inside $(-1,1)$. In particular the first class allows for a simple implementation and analysis of the `convex-concave splitting' time discretization of the Allen-Cahn equation which treats the convex part of $E_\eps$ implicitly in time and the concave part explicitly, i.e.\
\[
\frac{u_{n+1} - u_n} \tau = \Delta u_{n+1} - \frac{W_{vex}'(u_{n+1}) + W_{conc}'(u_n)}{\eps^2}
\]
where $W_{vex}, W_{conc}$ are the `convex part' and `concave part' of $W$ respectively. Such discretizations combine useful features of both implicit and explicit time-stepping schemes:
\begin{itemize}
\item As for an explicit scheme, the next iterate is uniquely determined and can be found by solving a computationally tractable problem.
\item As for an implicit (minimizing movements) scheme, $E_\eps$ is guaranteed to decrease along the iteration, even for large time-steps (assuming that the `minimizing movements' solution is found, which may not be the unique solution to the implicit Euler equation).
\end{itemize}
 If the convex part of $W$ is quadratic, the implicit time-step reduces to solving a linear system with the same operator in every time step, meaning that e.g.\ LU-factorization methods can be used efficiently. In contrast, with the classical potential $W(u) = (u^2-1)^2 /4$ or a barrier potential, each time-step requires the solution of a convex minimization problem and may itself be expensive.

\begin{figure}
\includegraphics[width = .48\linewidth]{./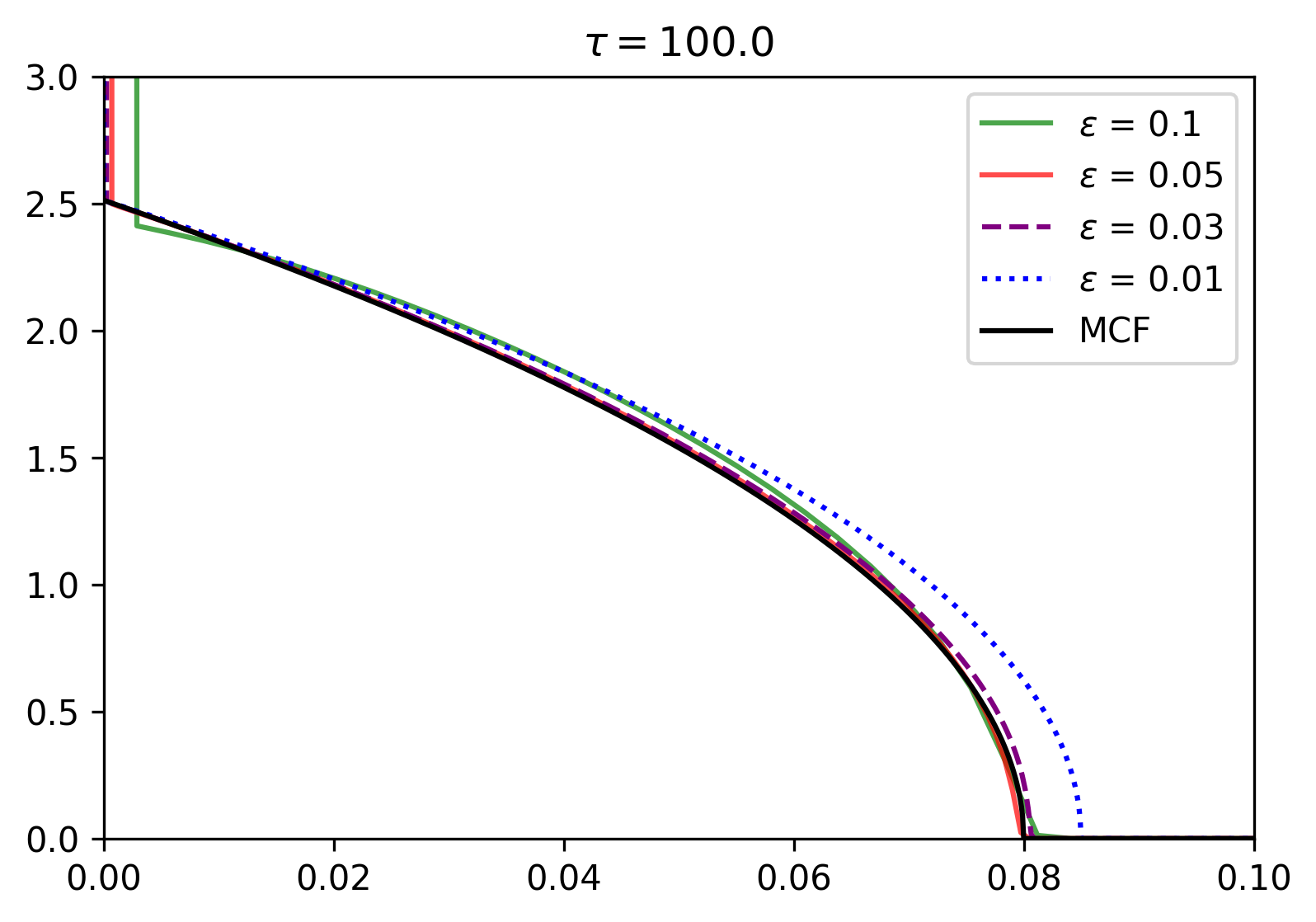}
\includegraphics[width = .48\linewidth]{./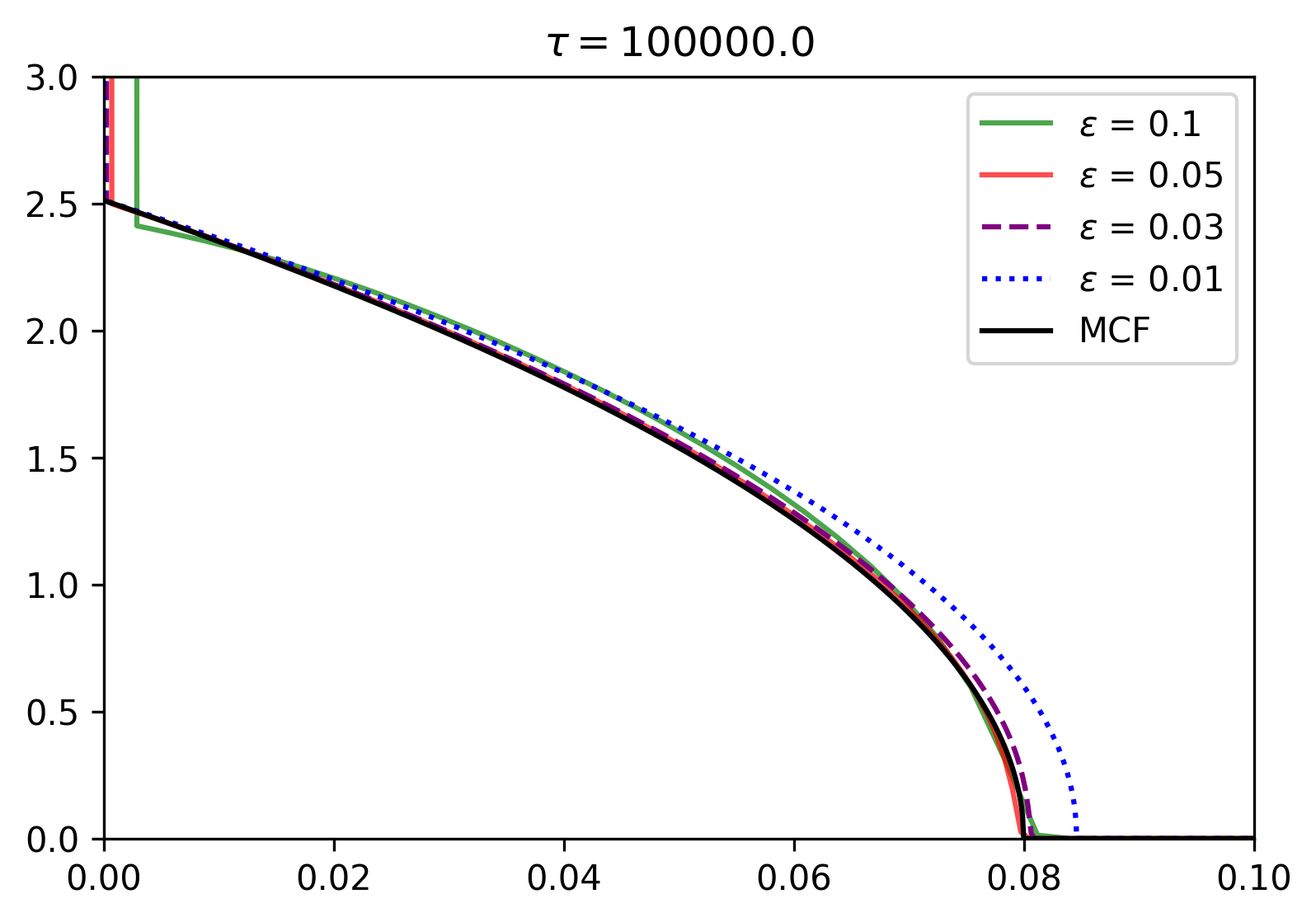}

\caption{Comparing the (normalized) Modica-Mortola energies $c_W^{-1}E_\eps$ along the convex-concave splitting time-stepping scheme for $\tau= 100$ (left)  and $\tau = 100,000$ (right). The initial condition is $2\cdot \chi_E - 1$ where $E$ is a circle of radius $r_0 = 0.4$ in the square $(0,1)^2$ with periodic boundary conditions. The potential is $W_R$ for $R=100$ as in Appendix \ref{appendix W_R}.\\
In each plot, the times are computed not according to the `nominal step-size' $\tau$, but according to an `effective step size' $0.29\cdot \eps^2$, i.e.\ we take 100 times more time steps for $\eps = 0.01$ compared to $\eps = 0.1$. All computations are performed on a uniform $n\times n$ grid for $n=512$ using a Fourier domain representation.\\
The plots visually agree well with the perimeter $2\pi \sqrt{r_0^2-2t}$ of the analytic solution to mean curvature flow/curve shortening flow $\dot r = -1/r$ of a circle. The increase in time step size $\tau$ makes no difference to the behavior of the curves.
\label{figure explore eps and tau}
}
\end{figure}    

Our main results for the convex-concave splitting time discretization of the Allen-Cahn equation are that
\begin{enumerate}
\item If either the convex or the concave part of the potential $W$ satisfies a type of uniform convexity condition, the effective time step size is limited by a power of $\eps$. For instance, if $W(u) = |u|^p + W_{conc}(u)$ for $2\leq p<\infty$, even formally large time-steps in the convex-concave splitting scheme effectively scale at best as $\eps^{1/(p-1)}$, independently of the formal time step size $\tau$ in the scheme. The same result holds for potentials with concave part $W_{conc}(u) = c - |u|^p$. For a more precise statement, see Section \ref{section eps-slow}.

\item For potentials with quadratic convex part, the convex-concave splitting is especially cheap to implement, but the effective step size scales as $\eps^2$ -- see Figure \ref{figure explore eps and tau}. This is even slower than the time scale $\eps$ guaranteed above. The analysis in this setting builds on a link to another computational approximation of mean curvature flow, the thresholding or Merriman-Bence-Osher (MBO) scheme. A full analysis of the modified MBO scheme is beyond the scope of this article, and our analysis is highly suggestive, but only fully rigorous on the entire space $\R^3$.

\item For the barrier potential with concave part $W_{conc}(u) = 1- |u|$, the `curvature' type conditions do not apply. Nevertheless, we demonstrate by example that the motion of interfaces occurs on the $\eps^2$-time scale also in this case.

\item For the  standard potential, we demonstrate the same $\eps^2$-slow motion in a numerical example.

\item We give a heuristic explanation for the universality of the $\eps^2$-time scaling.
\end{enumerate} 

More precise statements can be found in the text. In this work, we focus on the setting of discrete time, but continuous space. We believe that the methods apply equally to spatial discretizations, especially those which preserve the $L^2$-gradient flow structure. Morally, our contribution is the following: {\em Convex-concave splitting time-discretizations of the Allen-Cahn equation are numerically stable and at times easy to implement, but in all cases considered in this note, they achieve stability by drastically limiting the effective size. This allows the practitioner to choose an essentially arbitrary time-step without fear of blowing up, but in general it does not improve upon, for instance, a discretization which treats the Laplacian implicitly and the zeroth order term explicitly (with a sufficiently small step size). For potentials with convex parts that are not merely quadratic, the convex-concave splitting scheme is in fact {\em slower} in practice since each time-step requires the solution of a different linear system, while the na\"ive semi-implicit scheme requires the same system in each step.} 

The article is structured as follows. In Section \ref{section splitting basics}, we explore the motivation and guarantees of the convex-concave splitting algorithm. In Section \ref{section eps-slow}, we establish the result that the effective step size of the convex-concave splitting scales at most as $\eps^{1/(p-1)}$ for any potential with convex part $|u|^p$, based on a simple energy-dissipation estimate. In Section \ref{section quadratic}, we study potentials with quadratic convex part. Here, we also consider initial conditions which are not well-prepared and establish a link between the convex-concave splitting discretization of the Allen-Cahn equation and the MBO scheme for the prototypical double-well potential $W(u) = (|u|-1)^2$. Using the easier structure of linear equations, we observe that the effective step size in fact only scales as $\eps^2$, rather than the sub-optimal rate $\eps$ obtained previously. In Section \ref{section barrier potentials}, we analyze the scheme for barrier potentials which do not fall into one of the previous categories. We present a numerical experiment for the classical potential $W(u) = (u^2-1)^2/4$ in Section \ref{section numerical}. An intuitive explanation for $\eps^2$-slow motion and open questions are discussed in Section \ref{section conclusion}. Smooth potentials with quadratic convex part are constructed in Appendix \ref{appendix W_R}.

\subsection{Conventions and notations}\label{section conventions}

We will assume that the solution operator $(a-b\Delta)^{-1} : L^2(\Omega)\to L^2(\Omega)$ is densely defined, positive semi-definite and self-adjoint  for $a, b\geq 0$ and $a+b>0$. We will write $(a-b\Delta)^{-1}$ without specifying the boundary condition outside of specific examples, but always assume self-adjointness. The specific conditions of interest are:

\begin{enumerate}
\item Constant Dirichlet boundary conditions $u\equiv 1$ on a relatively closed and open subset $\Gamma\subseteq \partial\Omega$ and $u\equiv -1$ on $\partial\Omega\setminus \Gamma$,
\item Homogeneous Neumann boundary conditions $\partial_\nu u \equiv 0$ on $\partial\Omega$, and
\item Periodic boundary conditions.
\end{enumerate}

In the first setting, the solution operator is naturally not linear, but affine-linear. In principle, many results should also apply to evolutions on closed Riemannian manifolds, although we do not pay closer attention to this setting. 

We write $V\cc \Omega$ to signify that $\overline V$ is compact and $V\subseteq \overline V \subset \Omega^\circ \subseteq \Omega$.

The double-well potentials $W$ are always assumed to be positive inside $(-1,1)$ and vanish at $\pm 1$. Norms are denoted by $\|\cdot\|$ and the norm of vectors is assumed to be Euclidean throughout.

\section{Background on convex-concave splitting}\label{section splitting basics}

\subsection{Theoretical Foundations}
Let us briefly consider the optimization of general functions which can be written as $F+G$ with $F$ convex and $G$ concave.
Despite the simple ingredients, $F+G$ can be quite complicated -- indeed, any $C^2$-function $f:\R^d\to\R$ with globally bounded Hessian can be written as
\[
f (x) = \underbrace{ f(x)+ \frac\lambda2\,\|x\|^2}_{\text{convex}} + \underbrace{\frac{-\lambda}2\,\|x\|^2}_{\text{concave}}
\]
for sufficiently large $\lambda$. We assume that such a splitting $f=F+G$ is given.

An explicit gradient descent scheme $x_{n+1} = x_n - \tau \,\nabla f(x_n)$ is only monotonically energy-decreasing for general $f$ if $\tau$ is small enough -- specifically, if $\tau< 2/L$ is sufficient where $L$ is the Lipschitz-constant of $\nabla f$. The necessity of this condition can be proved by considering $f(x) = \frac L2\,x^2$, and its sufficiency is well-known in optimization -- see e.g.\ \cite[Theorem 3.1]{wojtowytsch2023stochastic}. Only in special cases can we choose `large' step sizes to attain faster convergence \cite{altschuler2024acceleration, altschuler2025acceleration, grimmer2025accelerated}.

The implicit gradient descent scheme $x_{n+1} = x_n - \tau\,\nabla f(x_{n+1})$ on the other hand always has an energy decreasing solution $x_{n+1} = \argmin_z \frac12\,\|x-z\|^2 + \tau\,f(z)$, but if $\tau$ is too large, there are generally additional solutions with possibly higher energy -- see Figure \ref{figure implicit euler}. In general, finding any solution requires solving a non-linear system of equations. Finding the energy decreasing solution may not be possible.

    \begin{figure}
    \centering
    \includegraphics[width=0.3\linewidth]{./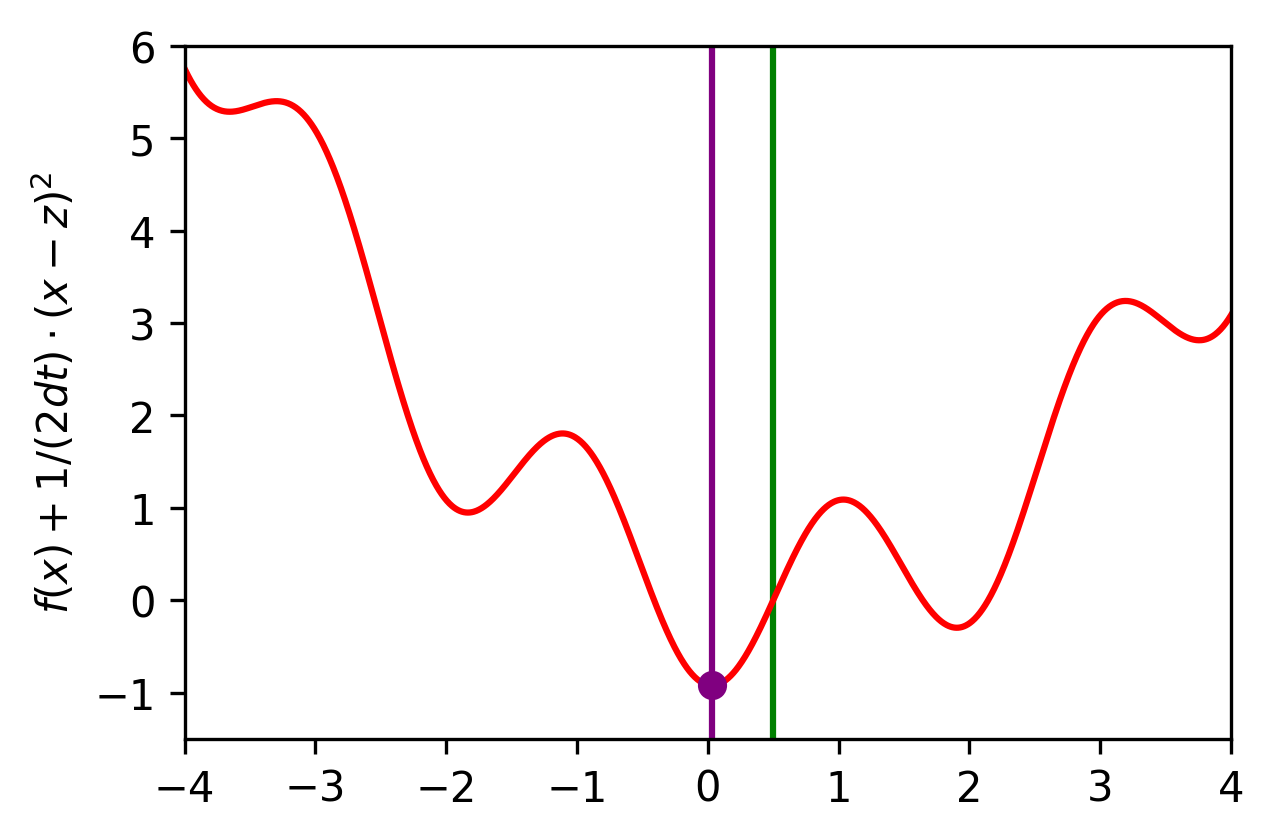}
    \includegraphics[width=0.3\linewidth]{./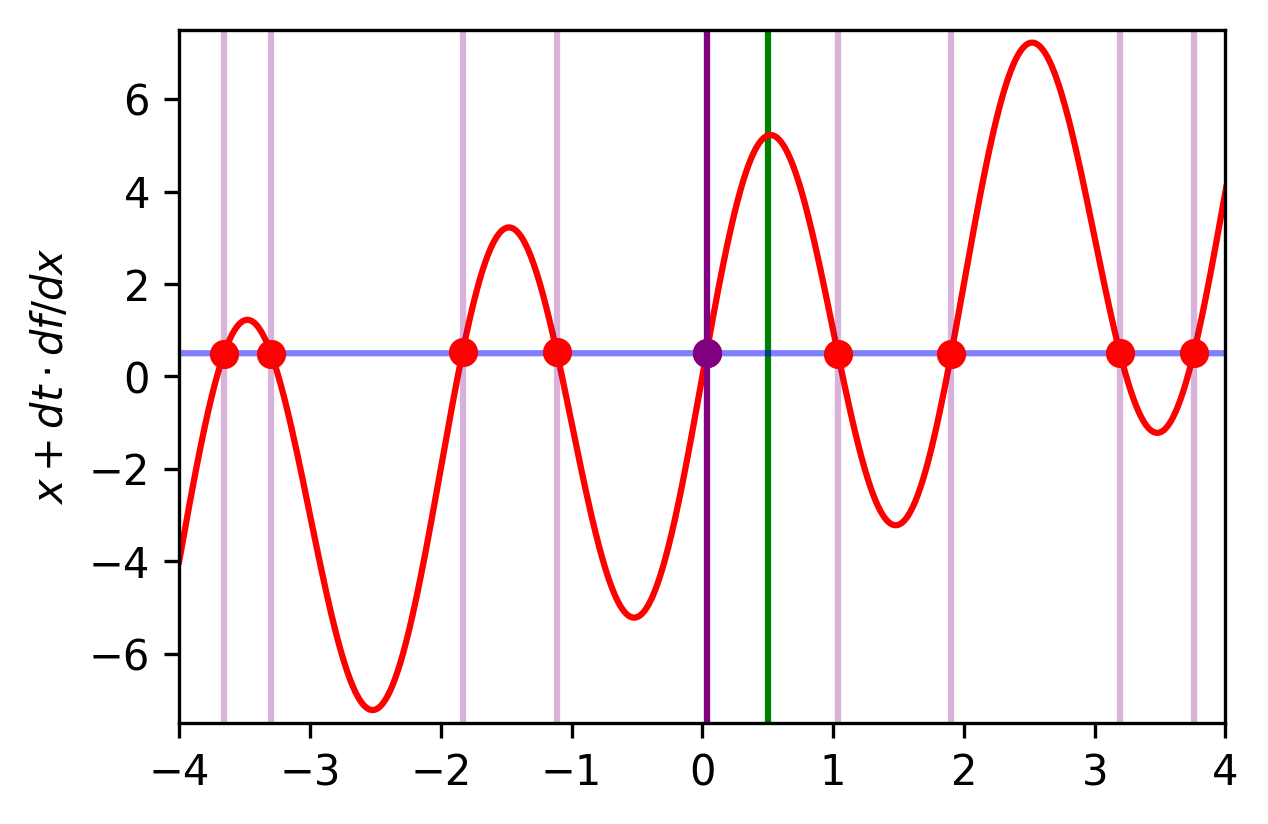}
    \hspace{2mm}       
    \includegraphics[width=0.3\linewidth]{./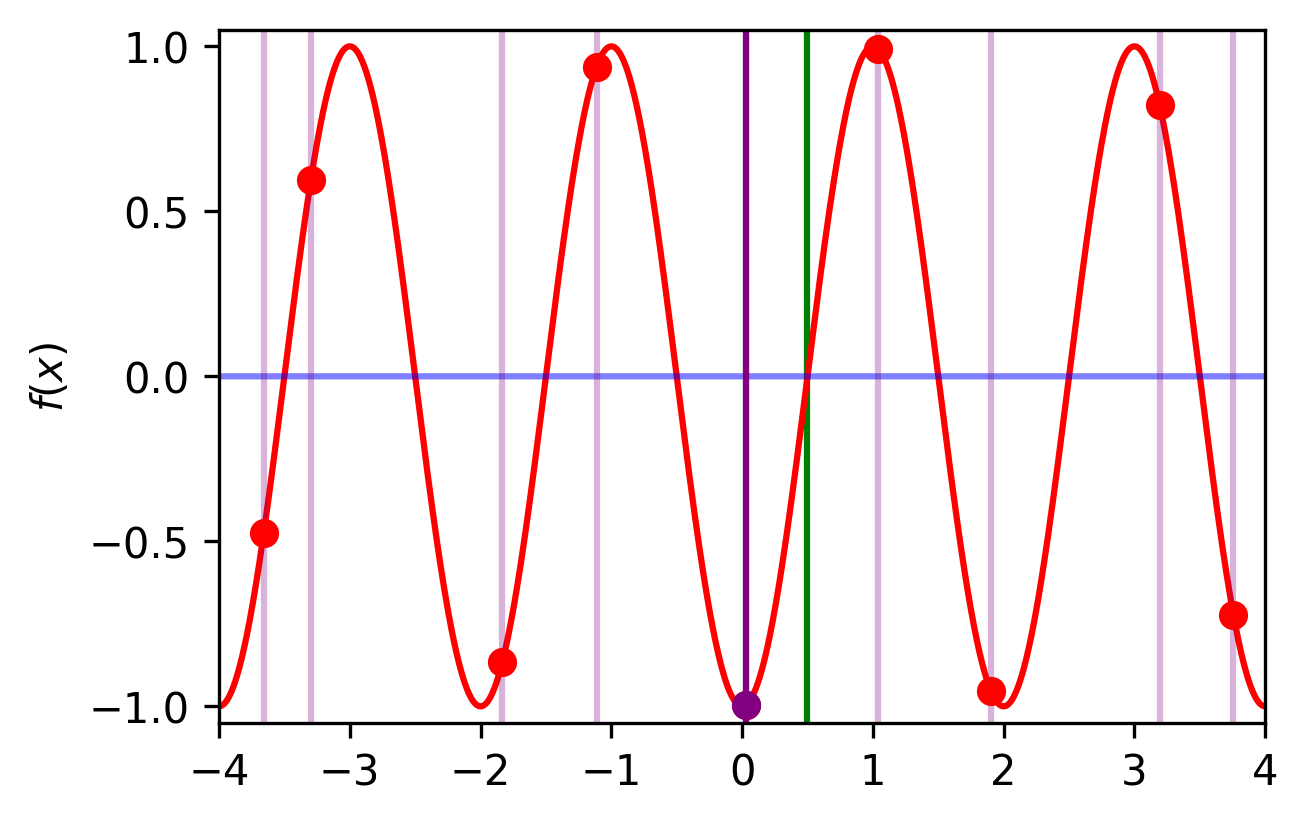}
        \caption{\label{figure implicit euler}
        {\bf Left:} $f(x) + \frac1{2\tau} \|x-z\|^2$ for $f(x) = -\cos(\pi x)$, $\tau = 1.5$ (red line) and $z= 0.5$ (vertical green line). The minimizer $x\approx 0.03$ (marked in purple) is the solution of the `minimizing movements' scheme.
        {\bf Middle:} $x+ \tau f'(x)$  as well as $z= 0.5$ (blue line). The intersections of the lines are the spurious solutions to the implicit Euler gradient descent equation $x+ \tau f'(x) = z$ starting at $z$ (local extrema in the left plot). 
        {\bf Right:} The objective function $f(x)$ (red line) and the energy level $f(z)$ at the initial point (blue line). The admissible implicit Euler solutions and the initial point $z$ are marked by vertical lines. The minimizing movements solution is marked by the solid purple line and dot. We see that the energy at the possible next steps is above the energy at the starting point for four out of nine options.}
    \end{figure}

If a splitting $f= F+ G$ into a convex part $F$ and a concave part $G$ is available, the mixed explicit/implicit iteration scheme
\begin{equation}\label{eq convex concave splitting}
x_{n+1} = x_n - \tau\big(\nabla F(x_{n+1}) + G(x_n)\big)
\end{equation}
defines the sequence $x_n$ uniquely (given $x_0$) and is energy-stable for any step size, see e.g.\ \cite[Chapter 6]{bartels2015numerical}. Namely, $x_{n+1}$ is the {\em unique} minimizer of the strongly convex function 
\[
f_{x_n, \tau}^{aux}(z) := \frac12\,\|z-x_n\|^2 + \tau \big(F(z) + G(x_n) + \langle \nabla G(x_n), z-x_n\rangle\big).
\]
To see that the scheme is energy-stable, recall that a convex $C^1$-function $F:\R^d\to\R$ satisfies the first order convexity condition
\[
F(x) \geq F(z) + \langle \nabla F(z), x-z\rangle\qquad \forall\ x, z\in \R^d.
\]
For a concave function $G$ on the other hand, we note that
\[
G(z) \leq G(x) + \nabla \langle G(x), z-x\rangle\qquad \forall\ x, z\in \R^d,
\]
where we reversed the roles of $x$ and $z$ in the notation. In particular, if $z = x_{n+1}$ and $x = x_n$ as in \eqref{eq convex concave splitting}, then we may add the two inequalities to obtain
\begin{align*}
(F+G)(x_{n+1}) 
    &\leq (F+G)(x_n) + \langle\nabla G(x_n), x_{n+1}-x_n\rangle - \langle \nabla F(x_{n+1}), x_n - x_{n+1}\rangle\\
    &= (F+G)(x_n) + \langle \nabla G(x_n) + \nabla F(x_{n+1}), \:x_{n+1}- x_n\rangle\\
    &= (F+G)(x_n) - \tau\,\big\|\nabla F(x_{n+1}) +\nabla G(x_n)\big\|^2.
\end{align*}
In particular, the sequence $(F+G)(x_n)$ is monotone decreasing independently of the step size.

The assumptions on $F, G$ can be relaxed somewhat: For instance, we may assume that $F, G: H\to \R$ such that $F$ is convex, lower semi-continuous, finite on a dense subspace $H'\subseteq H$ with a sub-differential on a dense sub-space $H''\subseteq H$, and that $G$ is continuous and has a Gateaux-derivative $\delta G(x; v) = \langle w_x, v\rangle$ for some $w_x\in H$ (i.e.\ the Gateaux-derivative is a linear map). These extensions are indeed necessary for the Allen-Cahn equation. For further details, see \cite[Theorem 4.1]{akande2024momentum} in a slightly different context.

For geometric intuition, we provide the following example.

\begin{example}\label{example quadratic convex-concave}
Assume that $F(x) = \frac12 \,x^TAx$ where $A$ is a symmetric positive semi-definite $d\times d$-matrix, and $G:\R^d\to\R$ is a smooth convex function. Then the convex-concave splitting gradient descent scheme reads
\[
    x_{n+1} = x_n - \tau\,Ax_{n+1} - \tau\,\nabla G(x_n),
\]
or equivalently 
\begin{align*}
    (I+\tau A)x_{n+1} &= x_n - \tau\,\nabla G(x_n) = (I+ \tau A) x_n - \tau\,(Ax_n + \nabla G)(x_n)\\
    &= (I+\tau A) x_n - \tau \,\nabla (F+G)(x_n).
\end{align*}
This reduces to the preconditioned explicit gradient descent scheme
\begin{align*}
    x_{n+1} &= x_n - \tau\big(I+\tau\,A\big)^{-1}\nabla (F+G)(x_n)
\end{align*}
which can be interpreted as an interpolation between the explicit gradient descent scheme and a partial Newton iteration since $A = D^2F(x_n)$ is the Hessian of the {\em convex} part of $F+G$. The time step size $\tau$ governs the trade-off between gradient descent and Newton iteration.
    
In one dimension $A = \lambda \in (0,\infty)$ and the effect of the convex-concave splitting is merely to automatically select a suitable step size $\tau / (1+\tau \lambda)$ for the explicit gradient descent scheme. As $\tau\to \infty$, the step size approaches $1/\lambda = 1/[\nabla F]_{Lip}$, which is the minmax optimal step size for gradient descent considering only $F$ and not $G$. 
    
In higher dimensions, the effect of preconditioning is more subtle as $A$ is matrix-valued. If $A$ is strictly positive definite, then
\begin{align*}
 x_{n+1} &= x_n - \tau\big(I+\tau\,A\big)^{-1}\nabla (F+G)(x_n)
    	&&= x_n - \left(\frac 1\tau\,I + A\right)^{-1}\big(Ax_n + \nabla G(x_n)\big)\\
	&\xrightarrow{\tau\to +\infty} x_n - A^{-1} Ax_n - A^{-1} \nabla G(x_n)
	&&= -A^{-1}\,\nabla G(x_n),
\end{align*}
i.e.\ the limiting time-step algorithm as $\tau\to \infty$ exists.
\end{example}

This observation also illustrates the limitations of the convex-concave splitting: A large step size may in reality correspond to a fairly small (explicit) time-step. 
For any convex function $\Phi$, also $(F+\Phi) + (G-\Phi)$ is a convex-concave splitting of $F+G$. Choosing $\Phi$ as a convex quadratic in the example above leads to a more restrictive step size limiter of the same form than $\Phi\equiv 0$.

\subsection{Application to the Allen-Cahn Equation}

A splitting of the potential $W= W_{vex}+W_{conc}$ into a convex part $W_{vex}$ and a concave part $W_{conc}$ induces a splitting of $E_\eps$ 
\[
E_\eps(u) = \int_\Omega \frac\eps2 \,\|\nabla u\|^2 + \frac{W_{vex}(u)}\eps\dx + \int_\Omega \frac{W_{conc}(u)}\eps\dx
\]
into a convex part and a concave part. The (normalized) time-step equation is
\[
u_{n+1} = u_n - \frac\tau\eps\,\left(-\eps\Delta u_{n+1}+ \frac{W_{vex}'(u_{n+1})}{\eps} + \frac{W_{conc}'(u_n)}{\eps}\right)
\]
with a time-normalized gradient flow as in the Allen-Cahn equation, or equivalently
\[
\left(1- \tau\,\Delta\right) u_{n+1} + \frac\tau{\eps^2}\,{W_{vex}'(u_{n+1})} = u_n - \frac\tau{\eps^2}\,W_{conc}'(u_n).
\]
Again, we refer to \cite{akande2024momentum} for a more detailed presentation. The equation takes a particularly simple form if $W_{vex}$ is quadratic, making the equation linear in $u_{n+1}$. 
We construct smooth potentials $W_R$ with quadratic convex part -- smooth approximations of $\overline W(u) = (|u|-1)^2$ -- in Appendix \ref{appendix W_R}.

\section{Theoretical guarantees: Slow movement for general potentials}\label{section eps-slow}

\subsection{A simple slowness bound} \label{section general bound}

Throughout this section, we assume that $W$ satisfies the following properties:
\begin{enumerate}
    \item $W\geq 0$ and $W(u) = 0$ if and only if $u\in\{-1,1\}$,
    \item $W = W_{vex} + W_{conc}(u)$ where $W_{conc}$ is concave and $W_{vex}$ is convex,
    \item $W_{conc}, W_{vex} \in C^1(\R)$ and
    \item $W_{vex}, W_{conc}$ jointly satisfy a uniform curvature condition
    \[
    W(u) \geq W(U) + \big(W_{vex}'(U) + W_{conc}'(u)\big)\,(u-U) + \bar c_W |u-U|^p\qquad\forall\ u, U\in\R
    \]
    for some $\bar c_W>0$ and $p\geq 2$.
\end{enumerate}

The assumption of joint uniform curvature is mild. Since $W_{vex}$ is convex and $W_{conc}$ is concave, the inequalities
\[
W_{vex}(u) \geq W_{vex}(v) + W_{vex}'(v)\,(u-v), \qquad W_{conc}(u) \geq W_{conc}(v) + W_{conc}'(u)(u-v)
\]
hold, noting that we evaluate the derivative at $v$ for the convex part and at $u$ for the concave part. The proposed inequality thus holds with $c_W = 0$ by simply adding the convexity and concavity conditions. If one of the inequalities can be sharpened by including a term such as $\bar c_W |u-v|^p$, then so can the joint inequality. Such functions are in fact common.

\begin{lemma}\label{lemma power law}
Let $p\geq 2$. Then there exists $\beta = \beta_p \geq 2^{-p}$ such that
\[
\big|u+w\big|^p \geq |u|^p + p|u|^{p-2}u\,w + \beta\,|w|^p\qquad \forall\ u, w\in\R.
\]
If $p=2$, we can choose $\beta=1$ and if $p=4$, we can choose $\beta = 1/3$.
\end{lemma}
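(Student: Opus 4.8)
The plan is to reduce the inequality to a one-variable statement by homogeneity, and then separate the two sign regimes of $w/u$. First, if $u=0$ the claim reads $|w|^p \geq \beta|w|^p$, which holds for any $\beta \leq 1$, and by the lower bound $\beta \geq 2^{-p}$ we want, this is consistent; so assume $u\neq 0$. Dividing through by $|u|^p$ and writing $t = w/u \in \R$, the inequality becomes
\[
|1+t|^p \geq 1 + p\,t + \beta\,|t|^p \qquad \forall\ t\in\R,
\]
where I have used $|u|^{p-2}u \cdot w = |u|^p \cdot t$. So it suffices to find $\beta = \beta_p > 0$ with $g(t) := |1+t|^p - 1 - pt - \beta|t|^p \geq 0$ for all $t$. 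Convexity of $s\mapsto |s|^p$ already gives $|1+t|^p \geq 1 + pt$ (the tangent line at $s=1$), so $g(t)\geq 0$ with $\beta = 0$; the task is to quantify the gap.

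Next I would split into $t \geq 0$ and $t \leq 0$. For $t\geq 0$: here $|1+t|^p = (1+t)^p$ and $|t|^p = t^p$, and I would show $(1+t)^p - 1 - pt \geq \beta t^p$. As $t\to\infty$ the left side behaves like $t^p$, so $\beta \leq 1$ is forced in the limit; for $t\to 0^+$ the left side is $\sim \binom{p}{2}t^2$ which dominates $\beta t^p$ for small $t$ since $p\geq 2$. A clean way to get a uniform constant: the function $t\mapsto \big((1+t)^p - 1 - pt\big)/t^p$ is continuous and positive on $(0,\infty)$, tends to $+\infty$ as $t\to 0^+$ (when $p>2$; equals the constant $\binom{p}{2}$ when $p=2$) and to $1$ as $t\to\infty$, hence has a positive infimum $\beta_+$. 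For $t\leq 0$, set $s = -t \in [0,\infty)$: the inequality becomes $|1-s|^p - 1 + ps \geq \beta s^p$. On $s\in[0,1]$ one checks $(1-s)^p - 1 + ps \geq 0$ and in fact $\geq$ a positive multiple of $s^p$ near $s=1$; on $s\geq 1$ we have $|1-s|^p = (s-1)^p$ and $(s-1)^p + ps - 1 \geq (s-1)^p$, while $(s-1)^p/s^p \to 1$, so again the ratio $\big((s-1)^p\mathbf{1}_{s\geq1} + \text{lower order}\big)/s^p$ stays bounded below by some $\beta_- > 0$. Taking $\beta_p = \min\{\beta_+, \beta_-, 1\}$ finishes the general claim, and the a priori bound $\beta_p \geq 2^{-p}$ should fall out of a crude estimate (e.g. comparing with the midpoint-convexity inequality $|u+w|^p + |u-w|^p \geq \ldots$ or directly bounding the ratios), which I would verify separately.

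Finally, the explicit constants: for $p=2$ expand $|u+w|^2 = |u|^2 + 2uw + |w|^2$, so equality holds with $\beta=1$, nothing to prove. For $p=4$, expand $(u+w)^4 = u^4 + 4u^3 w + 6u^2w^2 + 4uw^3 + w^4$, so the claimed inequality with $\beta = 1/3$ is equivalent to $6u^2w^2 + 4uw^3 + \tfrac23 w^4 \geq 0$, i.e. (dividing by $w^2$ when $w\neq 0$) $6u^2 + 4uw + \tfrac23 w^2 \geq 0$; this quadratic form in $(u,w)$ has discriminant $16 w^2 - 4\cdot 6 \cdot \tfrac23 w^2 = 16w^2 - 16 w^2 = 0$, so it is a perfect square (a nonnegative multiple of $(3u + w)^2$ up to scaling — indeed $6u^2 + 4uw + \tfrac23 w^2 = \tfrac23(3u+w)^2$), hence $\geq 0$, with equality exactly when $w = -3u$.

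\textbf{Main obstacle.} The routine algebra for $p\in\{2,4\}$ is immediate; the real work is the sharp-constant-free existence statement for general real $p\geq 2$ (equivalently, showing $\inf_{t}\big(|1+t|^p - 1 - pt\big)/|t|^p > 0$), where one must handle the two sign regimes of $t$ and, on the negative side, the behavior near $t=-1$ where $|1+t|^p$ vanishes — there one relies on the linear term $-pt = p|t|$ being strictly positive to absorb $\beta|t|^p$. Pinning down the explicit lower bound $\beta_p \geq 2^{-p}$ will require a slightly careful but elementary estimate rather than the soft compactness argument.
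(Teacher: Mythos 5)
Your reduction matches the paper's: divide by $|u|^p$, substitute $t=w/u$ (equivalently the paper's $\zeta=\sign(u)\,v/|u|$), and reduce to the one-variable Bernoulli-type inequality $|1+t|^p\geq 1+pt+\beta|t|^p$. Your case split ($t\geq 0$; $t\in[-1,0]$ via $s=-t$; $t\leq -1$ via $s\geq 1$) is also the paper's three-case split, and your $p=2$ and $p=4$ computations (including equality at $w=-3u$ for $p=4$) are correct and agree with the paper.

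The genuine gap is the explicit lower bound $\beta_p\geq 2^{-p}$, which is part of the lemma's statement. Your argument on each ray is a soft infimum-by-continuity/compactness argument (the ratio is positive, tends to $+\infty$ at $0^+$ and to $1$ at $\infty$, hence has a positive infimum), so it proves only that \emph{some} $\beta_p>0$ works, not the asserted $2^{-p}$. You flag this honestly, but the placeholder ``falls out of a crude estimate'' is precisely the step that must be supplied. The paper's route is more concrete: on $t\geq 0$ and $t\in[-1,0]$ it shows $\beta\leq 1$ suffices by a second-derivative / derivative-sign check, and on $t\leq -1$ it writes $\beta(p)=\min_{z\geq 1}\frac{(z-1)^p+pz-1}{z^p}$, drops one of the two positive terms on each of $[1,R]$ and $[R,\infty)$ to get $\beta(p)\geq\min\{(p-1)R^{1-p},(1-1/R)^p\}$, and evaluates at $R=2$ to obtain $2^{-p}$. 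If you want to complete your version, replacing the compactness step on $s\geq 1$ with exactly this two-interval bound at $R=2$ is the shortest fix; the other two regions already tolerate any $\beta\leq 1$ by your own (or the paper's) elementary monotonicity arguments.
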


In particular, the standard potential $W(u) = \frac14 (u^2-1)^2$ satisfies 
\[
W(u)  = \underbrace{\frac{u^4}4}_{=: W_{vex}} +\underbrace{\frac{1}4 - \frac{u^2}2}_{=: W_{conc}} \geq W(U) + W_{vex}'(U)\,(u-U) + \frac{|u-U|^4}{12} + W_{conc}'(u)(u-U) + \frac{|u-U|^2}2.
\]
A sharp constant for the lemma is also available if $W$ has a quadratic convex part. We believe Lemma \ref{lemma power law} to be known to experts, but have been unable to locate a reference. We prove it for the reader's convenience in Appendix \ref{appendix power law}.

Let $u_1, u_2, \dots$ be a sequence generated by the convex-concave splitting approximation to the Allen-Cahn equation with step size $\tau>0$ and spatial length scale $\eps>0$, starting at $u_0\in H^1(\Omega)$. In this section, we prove that 
\[
\|u_N - u_0\|_{L^p(\Omega)} \lesssim \big(N\eps^{1/(p-1)}\big)^{1-1/p}
\]
independently of $\tau$. This is a H\"older continuity estimate with the discrete time $t_N = N \eps^{1/(p-1)}$ and H\"older exponent $1-\frac1{p} = \frac{p-1}{p}$. In particular, even for large $\tau$, taking $N$ steps will not result in a meaningful distance from the initial point $u_0$ if $\eps \ll N^{1-p}$. 

\begin{lemma}\label{lemma energy descent}
Assume that $W$ satisfies the conditions above. Let $E_\eps(u_0)<\infty$ and $u_1$ the next time-step for the Allen-Cahn equation with the splitting associated to the decomposition of $W$ and with step-size $\tau>0$. Then
\[
\frac\eps2 \,[u_1-u_0]_{H^1_0(\Omega)}^2 + \frac\eps\tau \,\|u_1-u_0\|_{L^2(\Omega)}^2 + \frac{\bar c_W}\eps\,\|u_1-u_0\|_{L^p(\Omega)}^p  \leq E_\eps(u_0) - E_\eps(u_1).
\]
\end{lemma}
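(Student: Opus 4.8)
The statement is a discrete energy-dissipation inequality, so the natural strategy is to mimic the abstract convex-concave computation from Section \ref{section splitting basics}, but in the infinite-dimensional setting with the Dirichlet form playing the role of the quadratic convex part and keeping careful track of the extra $|u-U|^p$ term coming from the uniform curvature hypothesis. First I would write the time-step equation in its weak form: $u_1$ satisfies
\[
\langle u_1-u_0, v\rangle_{L^2} + \tau\,\eps\,\langle \nabla u_1, \nabla v\rangle_{L^2} + \frac\tau{\eps}\,\langle W_{vex}'(u_1) + W_{conc}'(u_0), v\rangle_{L^2} = 0
\]
for all test functions $v$, and then test with the specific choice $v = u_1-u_0$. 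This produces $\frac{1}{\tau}\|u_1-u_0\|_{L^2}^2 + \eps\,\langle \nabla u_1, \nabla(u_1-u_0)\rangle + \frac1\eps\langle W_{vex}'(u_1)+W_{conc}'(u_0), u_1-u_0\rangle = 0$, which is the discrete analogue of the identity $\langle \nabla F(x_{n+1})+\nabla G(x_n), x_{n+1}-x_n\rangle = -\tau\|\cdots\|^2$ used in the excerpt.

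The second step is to convert each inner product into an energy difference plus a good remainder. For the Dirichlet term, the polarization identity gives $\langle \nabla u_1, \nabla(u_1-u_0)\rangle = \frac12[u_1]_{H^1_0}^2 - \frac12[u_0]_{H^1_0}^2 + \frac12[u_1-u_0]_{H^1_0}^2$, so the Dirichlet part of $E_\eps(u_1)-E_\eps(u_0)$ appears together with the gain $\frac\eps2[u_1-u_0]_{H^1_0}^2$. For the potential term, I would apply the joint uniform curvature assumption pointwise with $u = u_1(x)$ and $U = u_0(x)$: this yields
\[
W(u_1) \geq W(u_0) + \big(W_{vex}'(u_0) + W_{conc}'(u_1)\big)(u_1-u_0) + \bar c_W|u_1-u_0|^p,
\]
but note the derivatives are evaluated at the "wrong" arguments ($W_{vex}'$ at $u_0$, $W_{conc}'$ at $u_1$) compared to what appears in the tested equation ($W_{vex}'(u_1)$, $W_{conc}'(u_0)$). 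This mismatch is exactly what the convexity/concavity inequalities are designed to fix: adding $W_{vex}'(u_1)(u_1-u_0) \geq W_{vex}(u_1) - W_{vex}(u_0) \geq W_{vex}'(u_0)(u_1-u_0)$ (convexity) and the analogous reversed concavity inequality $W_{conc}'(u_0)(u_1-u_0) \geq W_{conc}(u_1)-W_{conc}(u_0)$... wait — one must check the direction carefully. The cleanest route is to observe that the uniform curvature inequality as stated in hypothesis (4) already has $W_{vex}'(U)$ and $W_{conc}'(u)$, i.e. with $u=u_1$, $U=u_0$ it reads $W(u_1) \geq W(u_0) + (W_{vex}'(u_0)+W_{conc}'(u_1))(u_1-u_0) + \bar c_W|u_1-u_0|^p$; meanwhile swapping $u\leftrightarrow U$ and combining, or alternatively testing the equation and using plain convexity of $W_{vex}$ at $u_1$ and concavity of $W_{conc}$ at $u_0$, gives $\langle W_{vex}'(u_1)+W_{conc}'(u_0), u_1-u_0\rangle \geq \int_\Omega \big(W_{vex}(u_1)-W_{vex}(u_0)\big)\dx + \int_\Omega\big(W_{conc}(u_1)-W_{conc}(u_0)\big)\dx$. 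Then I would upgrade the concave half using the refined curvature term: since hypothesis (4) packages the surplus $\bar c_W|u-U|^p$ jointly, I would instead directly integrate hypothesis (4) with $u=u_0$, $U=u_1$ — giving $W(u_0) \geq W(u_1) + (W_{vex}'(u_1)+W_{conc}'(u_0))(u_0-u_1) + \bar c_W|u_0-u_1|^p$ — which rearranges precisely to $\langle W_{vex}'(u_1)+W_{conc}'(u_0), u_1-u_0\rangle \geq \int_\Omega(W(u_1)-W(u_0))\dx + \bar c_W\|u_1-u_0\|_{L^p}^p$. That is the key inequality; it is cleaner than splitting into the two one-sided pieces.

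The third step is bookkeeping: substitute the polarization identity and this potential inequality into the tested equation. One obtains
\[
\frac1\tau\|u_1-u_0\|_{L^2}^2 + \frac\eps2[u_1-u_0]_{H^1_0}^2 + \frac\eps2\big([u_1]_{H^1_0}^2 - [u_0]_{H^1_0}^2\big) + \frac1\eps\Big(\int_\Omega(W(u_1)-W(u_0)) + \bar c_W\|u_1-u_0\|_{L^p}^p\Big) \leq 0,
\]
and recognizing $\frac\eps2[u_j]_{H^1_0}^2 + \frac1\eps\int_\Omega W(u_j)\dx = E_\eps(u_j)$, this rearranges to exactly the claimed bound after multiplying through appropriately (note the statement has $\frac\eps\tau\|u_1-u_0\|_{L^2}^2$, which matches $\frac1\tau\|\cdot\|^2$ up to the global factor of $\eps$ — so one should track that the time-normalized gradient flow introduces the extra $\eps$; I would just be careful with the normalization constant in the time-step equation as written in Section \ref{section splitting basics}). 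I expect the main obstacle to be purely technical rather than conceptual: justifying that $u_1$ is regular enough ($H^1$, with $W_{vex}'(u_1) \in L^2$ or the appropriate dual space) for the weak formulation to be tested against $u_1-u_0$, and handling the case where $W_{vex}$ is not smooth (e.g. the barrier potential, where $W_{vex}'$ is a subdifferential) — there one replaces the equation by the variational inequality characterizing the minimizer of $f_{u_0,\tau}^{aux}$ and the same chain of inequalities goes through, since all that is used is the subgradient inequality for the convex part. Finiteness of $E_\eps(u_0)$ guarantees the right-hand side makes sense and that $u_0\in H^1$ so the minimization problem defining $u_1$ is well-posed; I would cite the variational characterization and energy-stability already recalled in Section \ref{section splitting basics} and in \cite{akande2024momentum} for this.
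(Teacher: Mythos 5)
Your proposal is correct and is essentially the same argument as the paper's proof: both apply the joint curvature hypothesis with $u=u_0$, $U=u_1$ to produce $W(u_0)-W(u_1) \geq (W_{vex}'(u_1)+W_{conc}'(u_0))(u_0-u_1)+\bar c_W|u_0-u_1|^p$, both expand the Dirichlet cross term so that $\frac{\eps}{2}[u_1-u_0]_{H^1_0}^2$ survives as a gain, and both identify $\eps\Delta u_1 - \eps^{-1}(W_{vex}'(u_1)+W_{conc}'(u_0)) = \frac{\eps}{\tau}(u_1-u_0)$ from the time-step equation. The only difference is presentational: you test the weak form with $v=u_1-u_0$ and rearrange, while the paper writes a single chain of inequalities starting from $E_\eps(u_0)=\int\frac\eps2\|\nabla(u_1-v)\|^2+\frac{W(u_0)}{\eps}\dx$ and integrating by parts along the way — these are the same computation read in opposite order.
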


\begin{proof}
Recall that
\[
u_1 = u_0 + \tau\left( \Delta u - \frac{W_{vex}'(u_1) + W_{conc}'(u_0)}{\eps^2}\right) = u_0 +\frac{\tau}{\eps}\left(\eps\,\Delta u - \frac{W_{vex}'(u_1) + W_{conc}'(u_0)}{\eps}\right).
\]
We denote 
\[
v= u_1-u_0 =  \frac{\tau}{\eps}\left(\eps\,\Delta u - \frac{W_{vex}'(u_1) + W_{conc}'(u_0)}{\eps}\right).
\] 
By assumption, we have
\begin{align*}
E_\eps(u_0) &= \int_\Omega \frac\eps2 \,\|\nabla(u_1-v)\|^2 + \frac{W(u_0)}{\eps} \dx \\
	&\geq \int_\Omega \frac\eps2\,\|\nabla u_1\|^2+ \frac{W(u_1)}{\eps} - \eps\,\langle \nabla u_1, \nabla v\rangle + \frac{W(u_0) - W(u_1)}\eps  + \frac\eps2\,\|\nabla v\|^2 \dx\\
	&\geq E_\eps(u_1) +\int_\Omega v\,\eps \Delta u_1 + \frac{W_{vex}'(u_1) + W_{conc}'(u_0)}\eps (u_0-u_1) + \frac{\bar c_W}\eps\,|u_1-u_0|^p  + \frac\eps2\,\|\nabla v\|^2 \dx\\
	&= E_\eps(u_1) +\int_\Omega \left(\eps \Delta u_1- \frac{W_{vex}'(u_1) + W_{conc}'(u_0)}\eps\right) v+ \frac{\bar c_W}\eps\,|u_1-u_0|^p  + \frac\eps2\,\|\nabla v\|^2 \dx\\
	&= E_\eps(u_1) +\int_\Omega \frac\eps\tau\,v^2 + \frac{\bar c_W}\eps\,|v|^p  + \frac\eps2\,\|\nabla v\|^2 \dx.\qedhere
\end{align*}
\end{proof}

\begin{corollary}\label{corollary simple}
Assume that $W$ satisfies the conditions above. Let $u_0\in H^1(\Omega)$, $\tau, \eps>0$ and $u_1, u_2, \dots$ the sequence generated by the gradient descent scheme associated with the convex-concave splitting of $W$. Then
\[
\|u_N-u_0\|_{L^p(\Omega)} \leq E_\eps(u_0)^{1/p}
\left(N\eps^\frac1{p-1}\right)^{1-1/p}.
\]
\end{corollary}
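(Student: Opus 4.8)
The plan is to sum the energy-dissipation inequality from Lemma \ref{lemma energy descent} over the steps $n = 0, 1, \dots, N-1$ and then convert the resulting bound on $\sum_n \|u_{n+1}-u_n\|_{L^p}^p$ into a bound on $\|u_N - u_0\|_{L^p}$ via the triangle inequality and Jensen/Hölder. Concretely, applying Lemma \ref{lemma energy descent} with $u_0$ replaced by $u_n$ and $u_1$ by $u_{n+1}$, and discarding the two nonnegative terms $\frac\eps2[u_{n+1}-u_n]_{H^1_0}^2$ and $\frac\eps\tau\|u_{n+1}-u_n\|_{L^2}^2$ on the left, gives
\[
\frac{\bar c_W}{\eps}\,\|u_{n+1}-u_n\|_{L^p(\Omega)}^p \leq E_\eps(u_n) - E_\eps(u_{n+1}).
\]
Summing from $n=0$ to $N-1$ telescopes the right-hand side to $E_\eps(u_0) - E_\eps(u_N) \leq E_\eps(u_0)$, since $E_\eps \geq 0$. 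Hence $\sum_{n=0}^{N-1}\|u_{n+1}-u_n\|_{L^p}^p \leq \frac{\eps}{\bar c_W}\,E_\eps(u_0)$.

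Next I would pass from the control of the sum of $p$-th powers to the total displacement. By the triangle inequality in $L^p(\Omega)$,
\[
\|u_N - u_0\|_{L^p(\Omega)} \leq \sum_{n=0}^{N-1}\|u_{n+1}-u_n\|_{L^p(\Omega)},
\]
and then by Hölder's inequality on the index set $\{0,\dots,N-1\}$ (i.e.\ power-mean / Jensen with exponent $p$),
\[
\sum_{n=0}^{N-1}\|u_{n+1}-u_n\|_{L^p(\Omega)} \leq N^{1-1/p}\left(\sum_{n=0}^{N-1}\|u_{n+1}-u_n\|_{L^p(\Omega)}^p\right)^{1/p} \leq N^{1-1/p}\left(\frac{\eps}{\bar c_W}\,E_\eps(u_0)\right)^{1/p}.
\]
Rearranging the powers of $\eps$ gives $N^{1-1/p}\,\eps^{1/p}\,(E_\eps(u_0)/\bar c_W)^{1/p}$; writing $\eps^{1/p} = (\eps^{1/(p-1)})^{(p-1)/p} = (\eps^{1/(p-1)})^{1-1/p}$ and absorbing $\bar c_W$ (or, as in the stated form, treating $\bar c_W$ as normalized to $1$ or suppressing it in the constant) yields exactly
\[
\|u_N-u_0\|_{L^p(\Omega)} \leq E_\eps(u_0)^{1/p}\left(N\,\eps^{1/(p-1)}\right)^{1-1/p}.
\]

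There is essentially no hard step here: the corollary is a formal consequence of Lemma \ref{lemma energy descent}, and the only things to be careful about are (i) that the telescoping genuinely requires only $E_\eps \geq 0$ and $E_\eps(u_0) < \infty$, both of which hold by the standing assumptions on $W$, (ii) that each iterate $u_n$ again lies in $H^1(\Omega)$ with finite energy so that Lemma \ref{lemma energy descent} applies at every step — this follows inductively since the lemma's bound shows $u_{n+1}-u_n \in H^1_0(\Omega)$ and $E_\eps(u_{n+1}) \leq E_\eps(u_n) < \infty$ — and (iii) the bookkeeping of the exponent of $\eps$, which is the one place where the precise rate $\eps^{1/(p-1)}$ rather than $\eps$ emerges, and where one must decide whether to keep or suppress the constant $\bar c_W$. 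If one wants to retain $\bar c_W$ explicitly, the right-hand side becomes $(E_\eps(u_0)/\bar c_W)^{1/p}\,(N\eps^{1/(p-1)})^{1-1/p}$.
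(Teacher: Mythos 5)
Your proof is correct and follows essentially the same route as the paper's: apply Lemma~\ref{lemma energy descent} at each step (dropping the $H^1$ and $L^2$ terms), telescope the energies, then combine the triangle inequality in $L^p(\Omega)$ with H\"older's inequality on the index set $\{0,\dots,N-1\}$. Your observation in point~(iii) is also accurate: the constant $\bar c_W$ should really appear as $(E_\eps(u_0)/\bar c_W)^{1/p}$ on the right-hand side, and the paper's proof silently suppresses this factor in the last displayed inequality.
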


The possibly simplest form of the statement applies to potentials with a quadratic convex part $u^2/2$ and the standard potential as 
\[
\|u_N-u_0\|_{L^2(\Omega)} \leq \sqrt{2\,N\eps\,E_\eps(u_0)}.
\]

\begin{proof}
From Lemma \ref{lemma energy descent}, we conclude by H\"older's inequality that
\begin{align*}
\|u_N-u_0\|_{L^p(\Omega)} &\leq \sum_{n=1}^N\|u_n-u_{n-1}\|_{L^p(\Omega)}
    &&\leq \left(\sum_{n=1}^N \|u_n-u_{n-1}\|_{L^p(\Omega)}^p\right)^\frac1p\left(\sum_{n=1}^N1\right)^{1-\frac1p}\\
    &\leq N^{1- 1/p}\left(\sum_{n=1}^N \big(E_\eps(u_{n-1}) - E_\eps(u_n)\big)\right)^\frac1p
    &&\leq E_\eps(u_0)^{1/p}\,\eps^{1/p}N^{(p-1)/p}.\qedhere
\end{align*}
\end{proof}

\begin{remark}
We only used that $E_\eps(u_0) \geq E_\eps(u_N)\geq 0$. However, if $u_N$ is known to be close to $u_0$, then $E_\eps(u_N)$ should be not much smaller than $E_\eps(u_0)$. This could be used to show that $u_N$ is in fact even closer to $u_0$ than we already knew. 

This very natural intuition is complicated by the fact that $E_\eps$ fails to be continuous in the $L^p$-topology. Indeed, if $u_0$ is far from being a well-prepared initial condition, then $E_\eps(u_0)$ may be very large or even infinite and $E_\eps(u_1)$ can be much smaller for $u_1$ close to $u_0$ in $L^p$. We conjecture that a geometric condition on the `modulus of lower semi-continuity' 
\begin{align*}
\omega (u_0; r) &:= \inf\left\{ E_\eps(u) - E_\eps(u_0): \|u-u_0\|_{L^p(\Omega)}<r\right\}
\end{align*}
could be used to improve the H\"older exponent, but not the scaling between $N$ and $\eps$. 

However, at least for potentials $W$ with $p=2$, Lemma \ref{lemma energy descent} is suggestive of the fact that the natural time scale should be $\eps^2$, not $\eps$. Namely, the estimate
\[
\inf_u E_\eps(u) + \frac1{2\,(\eps/(2\bar c_W))}\,\|u-u_0\|_{L^2(\Omega)}^2 \leq E_\eps(u_1) + \frac{\bar c_W}\eps \,\|u_1-u_0\|_{L^2(\Omega)}^2 \leq E(u_0)
\]
relates $u_1$ to the fully implicit minimizing movements scheme with time-step $\tau = \eps/(2\bar c_W)$, i.e.\ the implicit Euler scheme of the Allen-Cahn equation
\[
u_t = \eps\, \Delta u - \frac{W'(u)}\eps
\]
in the regular time-scale. The PDE is well-known to be $\eps$-slow, which is the reason for the usual time rescaling $t\mapsto t/\eps$. Making this analysis rigorous, however, would require a detailed analysis of the energy $E_\eps$ and the minimizing movements scheme, in particular the geometric properties along the trajectory. We return to the question of the `correct' $\eps$-scaling with a different approach in Section \ref{section thresholding}, but only for potentials with quadratic convex part.
\end{remark}

\section{Potentials with quadratic convex part}\label{section quadratic}

\subsection{Sharp Interface Initial Condition}

The previously obtained bounds are meaningless if the initial condition $u_0\in BV(\Omega; \{-1,1\})$ has the correct geometry -- a function which essentially only takes two values -- but low regularity, for instance, a sharp jump discontinuity along a hypersurface. On a grid in a finite difference or Fourier space discretization, these are the easiest initializations to prescribe, but they have infinite energy (continuum) or energy of order $O(1/h^2)$ where $h$ is the spatial length-scale of the discretization. We prove that, at least for `nice' initial sets $\{u=1\}$ and time steps of size $\tau \geq \eps$, the energy $E_\eps(u_1)$ is bounded in terms of the geometry of $u_0$ after a single gradient descent step with convex-concave splitting, independently of $\eps>0$. In particular, the bounds of Section \ref{section general bound} carry over for such initialization.

For simplicity, we focus on potentials $W$ with a quadratic convex part. To formalize our result, we must consider the potential $\overline W(u) = u^2 - 2|u|+1 = (|u|-1)^2$, which bounds any other potential with convex part $u^2$ from above (Lemma \ref{lemma w bound} in the appendix).

For future use, we compute the optimal profile $\overline \phi$ and normalizing constant $c_{\overline W}$ such that $E_\eps \xrightarrow{\Gamma} c_{\overline W}\Per$ for the potential $\overline W(z) = (1-|z|)^2$. The optimal profile is unique solution to the ODE
\begin{equation}\label{eq optimal profile 1}
\phi'' = \overline W'(\phi) = 2\big(\phi - \sign(\phi)\big), \qquad \phi(0) = 0, \qquad \lim_{x\to \pm \infty} \phi(x) = \pm 1,
\end{equation}
in this case
\begin{equation}\label{eq optimal profile 2}
\phi(x) = \sign(x) \,\big( 1- e^{-\sqrt 2\,|x|}\big).
\end{equation}
The normalizing constant is
\begin{equation}\label{eq normalizing constant}
c_{\overline W} = \int_{-1}^1 \sqrt{2\,\overline W(z)}\dz = \sqrt 2 \int_{-1}^1 1-|z|\dz = \sqrt 2.
\end{equation}

\begin{lemma}
Assume that $W(u) = u^2+ W_{conc}(u)$ where $W_{conc}$ is a concave $C^1$-function, $W\geq 0$ and $W(\pm 1) = 0$. Assume that $V\Subset \Omega$ is an open set with a $C^2$-boundary $\partial V$ and $u_0 = \chi_V - \chi_{V^c}$. Let
\[
u_1\in \argmin_{u\in H_0^1(\Omega)} \frac1{2\tau} \|u-u_0\|^2_{L^2(\Omega)} + \int_\Omega \frac\eps2\,\|\nabla u\|^2 + \frac{u^2 + W'_{conc}(u_0)\,u}{\eps}\dx
\]
be the unique solution of the gradient descent scheme with convex-concave splitting. Then $E_\eps(u_1) \leq (\sqrt 2+\eps/\tau)\,\Per(V)$. In particular, if $\tau\geq \eps$, then $E_\eps(u_1) \leq (\sqrt 2+1)\,\Per(V)$.
\end{lemma}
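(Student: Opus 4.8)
The plan is to bound $E_\eps(u_1)$ from above by exploiting two facts: first, that $u_1$ is the \emph{minimizer} of an auxiliary functional, so I may plug in any competitor; and second, that $\overline W(u) = u^2 - 2|u| + 1$ dominates $W$ for potentials with convex part $u^2$ (Lemma \ref{lemma w bound}), while $W_{conc}'(u_0) u \le \overline W_{conc}'(u_0) u + \text{const}$ along the sharp-interface data where $u_0 \in \{-1,1\}$. Concretely, since $u_0$ takes only the values $\pm 1$, we have $W_{conc}'(u_0) = \overline W_{conc}'(u_0) = -2\sign(u_0)$ pointwise, so the linear term in the auxiliary functional is literally the one associated to $\overline W$; hence it suffices to prove the claim for $W = \overline W$ itself and then note $E_\eps \le \overline E_\eps$. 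After this reduction, the auxiliary functional becomes, up to an additive constant, $\frac{1}{2\tau}\|u-u_0\|_{L^2}^2 + \overline E_\eps(u) + (\text{linear recentering})$, and minimizing it gives an energy at most that of any test function.

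The natural test function is the optimal-profile ansatz built from the signed distance to $\partial V$: set $u_*(x) = \overline\phi\big(\sdist(x,\partial V)/\eps\big)$ with $\overline\phi(t) = \sign(t)(1 - e^{-\sqrt 2 |t|})$ from \eqref{eq optimal profile 2}. This lies in $H_0^1$ after the usual truncation/cutoff near $\partial\Omega$ (legitimate since $V \Subset \Omega$ and $u_* \to \pm 1$ exponentially fast, so the correction costs a negligible amount that can be absorbed — or one works on a tubular neighborhood and extends by $\pm 1$). The standard Modica–Mortola computation with the coarea formula, using that $\partial V \in C^2$ so the distance function is smooth in a tube and $\mathcal H^{n-1}(\{\sdist = s\}) = \Per(V) + O(s)$, gives $\overline E_\eps(u_*) \le c_{\overline W}\Per(V) + O(\eps) = \sqrt2\,\Per(V) + O(\eps)$; in fact the equipartition $\frac\eps2 (\overline\phi')^2 = \frac{\overline W(\overline\phi)}\eps$ is exact for the profile, so the leading term is clean and one only pays curvature corrections. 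For the discrepancy term, $\frac{1}{2\tau}\|u_* - u_0\|_{L^2}^2$: since $u_*$ and $u_0$ differ appreciably only in an $O(\eps)$-tube around $\partial V$ and there $|u_* - u_0| \le 2$, one gets $\|u_*-u_0\|_{L^2}^2 \lesssim \eps\,\Per(V)$; tracking the constant carefully (using $\int_{\R} |{\overline\phi(t)} - \sign(t)|^2\,\eps\,dt = \eps \int_\R e^{-2\sqrt2|t|}\,dt = \eps/\sqrt2$ and $\int |{\overline\phi} + \sign|^2$ contributes the other side symmetrically, so the tube integral is $\eps \cdot \frac{2}{\sqrt2}\cdot\frac12 \cdot \mathcal H^{n-1}$-weighted) should yield exactly $\frac{1}{2\tau}\|u_*-u_0\|_{L^2}^2 \le \frac{\eps}{\tau}\Per(V)$ in the limit, matching the stated constant $\sqrt2 + \eps/\tau$. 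One then lets the cutoff parameter go to its limit to remove the $O(\eps)$ curvature and boundary errors, or absorbs them — the statement as written with constant $(\sqrt2 + \eps/\tau)$ suggests the authors obtain it cleanly, likely by a slightly sharper choice of competitor or by noting the curvature term has a sign.

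The main obstacle I anticipate is getting the \emph{sharp} constants rather than just the scaling: proving $E_\eps(u_1) \le (\sqrt2 + \eps/\tau)\Per(V)$ exactly (no extra $O(\eps)$ or geometric multiplicative factor) requires either a competitor whose energy genuinely has no curvature penalty, or an argument that the minimization kills the lower-order terms. A robust route: instead of the fixed-profile ansatz, use $u_* = \overline\phi(d/\eps)$ only in a tube of width $\delta$ and interpolate, then optimize over $\delta$; alternatively, exploit that $u_1$ minimizes, so $E_\eps(u_1) + \frac{1}{2\tau}\|u_1-u_0\|^2 \le \overline E_\eps(u_*) + \frac{1}{2\tau}\|u_*-u_0\|^2$ and prove the right-hand side converges to $(\sqrt2 + \eps/\tau)\Per(V)$ as the construction is refined, while $\frac{1}{2\tau}\|u_1-u_0\|^2 \ge 0$. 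The second potential snag is the $H_0^1$ boundary condition: one must ensure the competitor is admissible, which is why $V \Subset \Omega$ is hypothesized and the profile's exponential decay is used. Everything else — the reduction to $\overline W$ via $W \le \overline W$ and the pointwise identity $W_{conc}'(u_0) = \overline W_{conc}'(u_0)$ on $\{|u_0|=1\}$, the coarea computation, Hölder for the tube estimate — is routine.
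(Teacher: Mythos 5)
Your proposal is correct and matches the paper's proof step for step: the same competitor $v = \overline\phi(\sdist(\cdot,\partial V)/\eps)$, the same reduction to $\overline W$ via the pointwise identity $W_{conc}'(u_0) = -2\,\sign(u_0) = \overline W_{conc}'(u_0)$ together with $W\le\overline W$ (and the observation that $v\,u_0\ge 0$ turns the concavity bound into an equality for $v$), and the same comparison via minimality of $u_1$ in the auxiliary functional. The concerns you raise about the curvature correction in the $\Gamma$-$\limsup$ bound and about the $H_0^1$ versus affine-Dirichlet boundary condition are legitimate, but the paper simply invokes these estimates without addressing either point; you are not missing a device that the paper uses to resolve them.
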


\begin{proof}
We begin by constructing an energy competitor. Let 
\[
r(x) = \sdist(x, \partial V) = \dist(x, V^c) - \dist(x, V)
\]
be the signed distance function from $\partial V$ which is taken to be positive inside $V$ and negative outside of $V$. Take $\overline\phi$ as in \eqref{eq optimal profile 1} and \eqref{eq optimal profile 2} and observe that $\overline\phi' \equiv \sqrt{2\,W(\overline\phi)}$. The function $v(x) = \overline \phi(r(x)/\eps)$ satisfies $v\cdot u_0 \geq 0$ and 
\[
\int_{-1}^1 \frac\eps2 \|\nabla v\|^2 + \frac{W(v)}\eps \dx \leq \int_{-1}^1 \frac\eps2 \|\nabla v\|^2 + \frac{\overline W(v)}\eps \dx \leq c_{\overline W}\, \Per(V)
\]
for $c_{\overline W}$ as in \eqref{eq normalizing constant} by combining Lemma \ref{lemma w bound} and the proof of the $\Gamma-\limsup$ inequality for the convergence $E_\eps\to\Per$. The same computation shows that
\begin{align*}
\| v -u_0\|_{L^2(\Omega)}^2 &\leq \Per(V) \,\eps \int_{-\infty}^\infty \big||\bar \phi(r)| -1 \big|\dr = 2\,\Per(V)\eps\int_0^\infty e^{-\sqrt 2\,r}\dr = \sqrt{2}\,\Per(V)\,\eps.
\end{align*}

Note that $W_{conc}'(u_0)\,u = -2\,sign(u_0)\,u \geq -2|u|$ and that the inequality is an equality if $uu_0\geq 0$. In particular, $W_{conc}'(u_0) v = -2|v|$.
Hence
\begin{align*}
\frac1{2\tau}\|u_1-u_0\|_{L^2(\Omega)}^2 + E_\eps(u_1) 
	&=\frac1{2\tau} \|u_1-u_0\|^2_{L^2(\Omega)} + \int_\Omega \frac\eps2\,\|\nabla u_1\|^2 + \frac{\overline W(u_1)}{\eps}\dx\\
	&\leq \frac1{2\tau} \|u_1-u_0\|^2_{L^2(\Omega)} + \int_\Omega \frac\eps2\,\|\nabla u_1\|^2 + \frac{u^2_1 + W_{conc}'(u_0) u_1 + 1}{\eps}\dx\\
	&\leq \frac1{2\tau} \|v-u_0\|^2_{L^2(\Omega)} + \int_\Omega \frac\eps2\,\|\nabla v\|^2 + \frac{v^2 + W_{conc}'(u_0) v + 1}{\eps}\dx\\
	&= \frac1{2\tau}\|v-u_0\|_{L^2(\Omega)}^2 +  \int_\Omega \frac\eps2\,\|\nabla v\|^2 + \frac{\big(|v|-1\big)^2}{\eps}\dx\\
	&\leq \left( \frac{\eps}\tau + 1\right)\sqrt 2\,\Per(V).\qedhere
\end{align*}
 \end{proof}
 
So, while all subsequent time steps are at best $\eps$-slow, the initial time-step from a sharp interface initial condition to a diffuse approximation relaxes to uniformly finite energy independently of $\eps$ and (almost) independently of $\tau$. 

The Allen-Cahn equation has no interesting dynamics on the `curvature motion' time-scale in one dimension \cite{carr1989metastable, fusco1989slow, bronsard1990slowness, fusco1996traveling}, but it is an interesting toy problem to observe the formation of interfaces in the first time-step from a step function.

\begin{example}\label{example first step 1d}
Let $u_0:\R\to \R$ be given by $u_0(x) = \sign(x)$ and $u_1$ the solution of the problem
\[
\frac{u_1-u_0}\tau - u_1'' + \frac{2 u_1 + W_{conc}'(u_0)}{\eps^2} = 0
\]
with boundary conditions $\lim_{x\to\pm\infty}u_1(x) = \pm 1$. This can be reformulated as
\[
-\eps^2\,u_1'' + \left(1+ \frac{\eps^2}{2\tau}\right) 2u_1 = \left(1+ \frac{\eps^2}{2\tau}\right)\,2\,\sign(x)
\]
since $W_{conc}'(u_0) = - 2u_0 = -2 \, \sign(x)$. The solution is $u_1(x) = \overline \phi\left( {\big(1+ \frac{\eps^2}{2\tau}}\big)^{1/2}\,\eps^{-1}\,x\right)$ where $\overline \phi$ is as in \eqref{eq optimal profile 1} and \eqref{eq optimal profile 2}. In particular, as $\tau\to \infty$, $u_1$ rapidly approaches the optimal transition rescaled to the correct length-scale $\eps$. Even for $\tau=1$, the error is of negligible order $\eps^3$.
\end{example}

\subsection{The Allen-Cahn Equation, the Thresholding Scheme, and \texorpdfstring{$\eps^2$-}{}slow motion}\label{section thresholding}

Consider the Allen-Cahn equation with the double-well potential $\overline W(u) = \big(|u|-1\big)^2 = u^2 - 2|u| + 1$ where we interpret $u^2$ as the convex part and $1-2|u|$ as the concave part of $\overline W$. This corresponds to a split of the Modica-Mortola functional
\[
E_\eps = F_\eps + G_\eps, \qquad F_\eps(u) = \int_\Omega \frac12\,\|\nabla u\|^2 + \frac{u^2}{\eps^2} \dx, \qquad G_\eps(u) = \int_\Omega \frac{1-2|u|}{\eps^2}\dx.
\]
With a little technical care, Example \ref{example quadratic convex-concave} shows that the convex-concave splitting gradient descent scheme with formal step-size $\tau = +\infty$ corresponds to the iteration
\begin{equation}\label{eq quasi-thresholding}
u_{n+1} = -\left(\frac2{\eps^2} - \Delta\right)^{-1} \frac{\overline W'_{conc}(u_n)}{\eps^2} = \left(1 - \frac{\eps^2}2\Delta \right)^{-1} \sign(u_n)
\end{equation}
since $\overline W_{conc}'(u) = - 2 \,\sign(u)$. It is convenient to assume that $\{u_n=0\}$ is a null set, but we may also break the tie arbitrarily by assigning $\sign(0)$ as either $-1$ or $1$. Naturally, the same remains valid if $W_{conc}$ is a smooth concave function. The iteration can be seen as a first-order (implicit Euler) approximation 
\[
\frac{v(h) - v_0}{h} = \Delta v(h) \qquad \LRa\quad (1- h\,\Delta) v(h) = v_0
\]
to the solution of the heat equation
\[
\begin{pde} (\partial_t - \Delta)U &= 0\\ U &= \sign(u_n)\end{pde}
\]
at the time $h = \eps^2/2$ and with initial condition $v_0 = \sign(u_n)$. We can split this into a two-step iteration scheme
\begin{equation}\label{eq quasi-thresholding 2}
\tilde u_n = \sign(u_n), \qquad \left(1-\frac{\eps^2}2\right) u_{n+1} = \tilde u_n,
\end{equation}
which strongly resembles the Merriman-Bence-Osher (MBO) or `thresholding' scheme
\[
\tilde u_n = \sign(u_n), \qquad u_{n+1} = \left[\left(\partial_t - \Delta\right)^{-1}\tilde u_n\right]_{t = \frac{\eps^2}2},
\]
with step-size $h = \frac{\eps^2}2$. For us, the threshold values are $-1$ and $1$ rather than $0, 1$ since we chose the wells of $W$ at $\pm 1$, and the heat kernel convolution is replaced by a first order approximation to the heat equation. Such an approximation is common on graphs or complicated domains where the heat equation cannot be solved cheaply by a known heat kernel or Fourier-domain approach.

We already know that the energy $E_\eps(u) = \int_\Omega \frac\eps 2\,\|\nabla u\|^2 + \frac{\overline W(u)}\eps\dx$ is a monotone quantity for the sequence $u_n$ in the iteration scheme. We use the analogy to find a second monotone quantity, but for the sequence $\tilde u_n$. While $E_\eps$ is the Modica-Mortola approximation to the perimeter functional, the new quantity resembles the Esedoglu-Otto approximation of the perimeter \cite{esedoglu2015threshold}.

\begin{lemma}\label{lemma esedoglu otto}
Let $\tau>0$. For $u\in L^2(\Omega)$, denote by $G_\tau u$ the solution $w$ of the PDE $(1-\tau\,\Delta) w = u$. Define
\[
F_\tau(u) = \begin{cases}\frac1{2\tau} \int_\Omega (1-u)\,G_\tau (1+u)\dx &\text{if }|u|\leq 1 \text{ a.e.}\\ +\infty &\text{else}\end{cases}, \qquad d_\tau(u, v) = \sqrt{ \int_\Omega (u-v)\,G_\tau (u-v)\dx}.
\]
Then $\tilde u_{n+1}$ in \eqref{eq quasi-thresholding 2} is the solution of the minimizing movements scheme 
\[
u\in \argmin F_\tau(u) + \frac1{2\tau}\,d_\tau^2(u, \tilde u_n)
\]
for $\tau = \frac{\eps^2}2$. In particular, $F_{\eps^2/2}(\tilde u_{n+1})\leq F_{\eps^2/2}(\tilde u_n)$.
\end{lemma}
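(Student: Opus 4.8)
The plan is to recognize the map $u \mapsto \tilde u_{n+1}$ from \eqref{eq quasi-thresholding 2} as one step of a minimizing movements (JKO-type) scheme for the functional $F_\tau$ with respect to the distance $d_\tau$, so that the monotonicity $F_\tau(\tilde u_{n+1})\le F_\tau(\tilde u_n)$ is automatic from the defining minimization property. Concretely, recall that $\tilde u_{n+1} = \sign(u_{n+1})$ where $(1-\tau\Delta)u_{n+1} = \tilde u_n$, i.e. $u_{n+1} = G_\tau \tilde u_n$. So I must show that $\sign(G_\tau \tilde u_n)$ is a (the) minimizer of $v \mapsto F_\tau(v) + \frac1{2\tau} d_\tau^2(v, \tilde u_n)$ over $v$ with $|v|\le 1$ a.e. Since $F_\tau$ is restricted to $\{|v|\le 1\}$, I first expand the objective. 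Writing the self-adjoint, positivity-preserving operator $G_\tau$ and using $\int (1-v)G_\tau(1+v) = \int G_\tau 1 + \int G_\tau v - \int v\, G_\tau 1 - \int v\, G_\tau v$ (using self-adjointness to symmetrize the cross terms), and expanding $d_\tau^2(v,\tilde u_n) = \int v G_\tau v - 2\int v G_\tau \tilde u_n + \int \tilde u_n G_\tau \tilde u_n$, the quadratic-in-$v$ terms $\int v G_\tau v$ \emph{cancel}. This is the crucial structural point that makes the minimization a \emph{linear} program in $v$ pointwise.

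The key steps in order: (1) Expand $2\tau\big(F_\tau(v) + \frac1{2\tau}d_\tau^2(v,\tilde u_n)\big)$ and collect terms; using self-adjointness of $G_\tau$ and that $G_\tau$ preserves constants up to the obvious factor (or just keeping $G_\tau 1$ as is), verify the $\int v\,G_\tau v$ terms cancel, leaving an affine functional of $v$ of the form $\text{const} - \int v\,\big(G_\tau \tilde u_n + c\big)\dx$ for some constant $c$ independent of $v$ (in fact one finds the relevant coefficient is exactly $G_\tau \tilde u_n$, the constant pieces dropping out or being $v$-independent). (2) Since the functional is now $\text{const} - \int_\Omega v(x)\,\psi(x)\dx$ with $\psi = G_\tau\tilde u_n = u_{n+1}$, and $v$ ranges over the pointwise constraint set $|v(x)|\le 1$, the minimizer is obtained pointwise: $v(x) = \sign(\psi(x)) = \sign(u_{n+1})$ wherever $\psi(x)\ne 0$, and is arbitrary in $[-1,1]$ on $\{\psi = 0\}$ (consistent with the earlier tie-breaking convention; and if one wants the well-defined minimizer one notes $\{\psi = 0\}$ is typically null, or simply that the minimum value is unaffected). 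Thus $\tilde u_{n+1} = \sign(u_{n+1})$ is \emph{a} minimizer. (3) Conclude: by the minimizing-movements inequality, $F_\tau(\tilde u_{n+1}) \le F_\tau(\tilde u_{n+1}) + \frac1{2\tau}d_\tau^2(\tilde u_{n+1}, \tilde u_n) \le F_\tau(\tilde u_n) + \frac1{2\tau} d_\tau^2(\tilde u_n, \tilde u_n) = F_\tau(\tilde u_n)$, using $d_\tau(\tilde u_n,\tilde u_n) = 0$. Specializing $\tau = \eps^2/2$ gives the stated $F_{\eps^2/2}(\tilde u_{n+1}) \le F_{\eps^2/2}(\tilde u_n)$.

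I would also want to record why $F_\tau \ge 0$ on $\{|u|\le 1\}$ (so the statement is not vacuous and the scheme is sensible): since $G_\tau$ is positivity-preserving (the resolvent $(1-\tau\Delta)^{-1}$ has a nonnegative kernel under all the boundary conditions considered) and $1\pm u \ge 0$, the integrand $(1-u)G_\tau(1+u)$ is a product of a nonnegative function with a nonnegative function, hence $F_\tau(u)\ge 0$; this is a short remark, not part of the main argument. The main obstacle I anticipate is purely bookkeeping in step (1): being careful that the cross terms $\int v\,G_\tau 1$ and $\int 1 \cdot G_\tau v$ coincide by self-adjointness, that the genuinely $v$-dependent quadratic terms from $F_\tau$ and from $d_\tau^2$ have \emph{opposite signs} and equal magnitude so they cancel (one should double-check the sign: $F_\tau$ contributes $-\int v G_\tau v$ from the $(1-u)(1+u)$ expansion, while $\frac1{2\tau}d_\tau^2$ contributes $+\frac1{2\tau}\int v G_\tau v$ — matching the $\frac1{2\tau}$ prefactor pulled out, these indeed cancel), and that the affine coefficient of $v$ reduces exactly to $G_\tau\tilde u_n$ after the constant ($v$-independent) terms are discarded. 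A secondary, minor point is handling the constraint set and the non-uniqueness on $\{G_\tau\tilde u_n = 0\}$ cleanly, which I would dispose of with the same remark used elsewhere in the section (break ties arbitrarily, or note the zero set is null).
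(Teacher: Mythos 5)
Your proposal is correct and follows essentially the same computation as the paper: expand the objective, observe that the quadratic terms $\int u\,G_\tau u$ cancel and the linear-in-$u$ terms coming from $F_\tau$ vanish by self-adjointness of $G_\tau$, then minimize the remaining affine functional $-\tfrac1\tau\int u\,G_\tau\tilde u_n\dx$ pointwise over $\{|u|\le 1\}$ to get $u = \sign(G_\tau\tilde u_n) = \tilde u_{n+1}$. The only additions are your explicit step (3) write-out of the minimizing-movements energy-decrease chain (which the paper leaves as ``In particular'') and the positivity remark for $F_\tau$ --- both harmless and slightly clarifying.
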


\begin{proof}
Recall that $G_\tau: L^2(\Omega)\to L^2(\Omega)$ is a well-defined self-adjoint operator (recall Section \ref{section conventions}). We can rewrite
\begin{align*}
F_\tau(u) + \frac1{2\tau}\,d_\tau^2(u, \tilde u_n) &= \frac1{2\tau} \int_\Omega (1-u)\,G_\tau (1+u)\dx + \frac1{2\tau} \int_\Omega (u-\tilde u_n)\,G_\tau (u-\tilde u_n)\dx\\
	&= \frac1{2\tau} \int_\Omega -u\,G_\tau u +1\,G_\tau u - u\,G_\tau 1 + 1\,G_\tau 1+ u\,G_\tau u - 2\tilde u_n\,G_\tau u + \tilde u_n G_\tau\tilde u_n\dx\\
	&= \frac1{2\tau} \int_\Omega -2\,u\, G_\tau \tilde u_n\dx + \frac1{2\tau} \int_\Omega \tilde u_n\,G_\tau\tilde u_n- 1\,G_\tau 1\dx
\end{align*}
under the assumption that $|u|\leq 1$ almost everywhere. The function $u$ should be maximally aligned with $G_\tau\tilde u_n$, suggesting that
\[
u = \sign\big(G_\tau \tilde u_n\big),
\]
which coincides with the expression for $\tilde u_{n+1}$ if $\tau = \frac{\eps^2}2$.
\end{proof}

 We assert that standard proofs apply to show that \eqref{eq quasi-thresholding} is a time-discretization of MCF with time-step size $\eps^2/2$:
\begin{enumerate}
\item The time-steps respect the maximum principle/inclusions of sets, which is the key property for convergence to viscosity solutions.

\item The constructions of BV-type solutions in the fashion of \cite{esedoglu2015threshold, laux2016convergence, laux2020thresholding} can be recovered based on Lemma \ref{lemma esedoglu otto}.
\end{enumerate}

Indeed, it is known that thresholding dynamics of the more general type
\begin{equation}\label{eq thresholding convolution}
\tilde u_{n+1, \tau} = K_\tau* u_{n,\tau}, \qquad u_{n+1, \tau} = \sign(\tilde u_{n+1, \tau}), \qquad K_\tau(x) = \tau^{-d/2} K\left(\frac{x}{\sqrt\tau}\right)
\end{equation}
converge to mean curvature flow on the whole space \cite{ishii1995generalization, ishii1999threshold} when taken as time steps of length $\tau>0$, i.e.\ $u_\tau(s) = u_{\lfloor s/\tau\rfloor}$  under the assumptions that $K$ is radially symmetric and
\[
\int_{\R^d}K(x)\dx =1, \qquad \int_{\R^d}K(x) \,\|x\|^2\dx < +\infty,
\]
as well as a technical condition discussed in Appendix \ref{appendix thresholding}. On the entire space in three dimensions, we observe that
\[
(1-\tau \Delta) u = f\qquad\LRa\quad \left(\frac1\tau - \Delta\right) u = \frac1\tau f \qquad \LRa \quad u = K_\tau * \left(\frac f \tau\right) = \frac{K_\tau}{\tau} f
\]
where $K_\tau$ is the Green's function for the screened Poisson equation
\[
K_\tau(x) = \frac{\exp(-\|x\|/\sqrt \tau)}{4\pi\,\|x\|} = \frac1{\tau^{1/2}}\,\frac{\exp(-\|x/\sqrt \tau\|)}{4\pi\,\|x/\sqrt \tau\|}.
\]
Since the convolution is with $\tau^{-1}K_\tau = \tau^{-3/2} K_1(x/\sqrt\tau)$, we conclude that at least on the whole space $\R^3$, the iteration \eqref{eq quasi-thresholding} falls precisely into the framework of \eqref{eq thresholding convolution} with $\tau = \frac{\eps^2}2$ and we conclude that the effective time scale of the convex-concave splitting Allen-Cahn solver is indeed $\eps^2/2$.

A rigorous analysis of thresholding dynamics like \eqref{eq quasi-thresholding} outside of this special case is beyond the scope of our study of the convex-concave splitting scheme. The scaling in $\eps$ derives from the analysis of the quadratic part $W_{vex}(u) = u^2$ of the potential and similar results would hold for smoother potentials with quadratic convex part, such as $W_R$ in Appendix \ref{appendix W_R}, leading to `smooth thresholding' schemes.

The analytic results are valid only for the limiting iteration with formally infinite step size. This is the least restrictive minimization problem since the $L^2$-distance to the previous iterate is not controlled. We have not rigorously demonstrated that this is optimal, and that there is no finite `sweet spot' for the step size $\tau$. We show in Figure \ref{figure no of iterations} that this is not expected.

\begin{figure}
\includegraphics[width =.32\textwidth]{./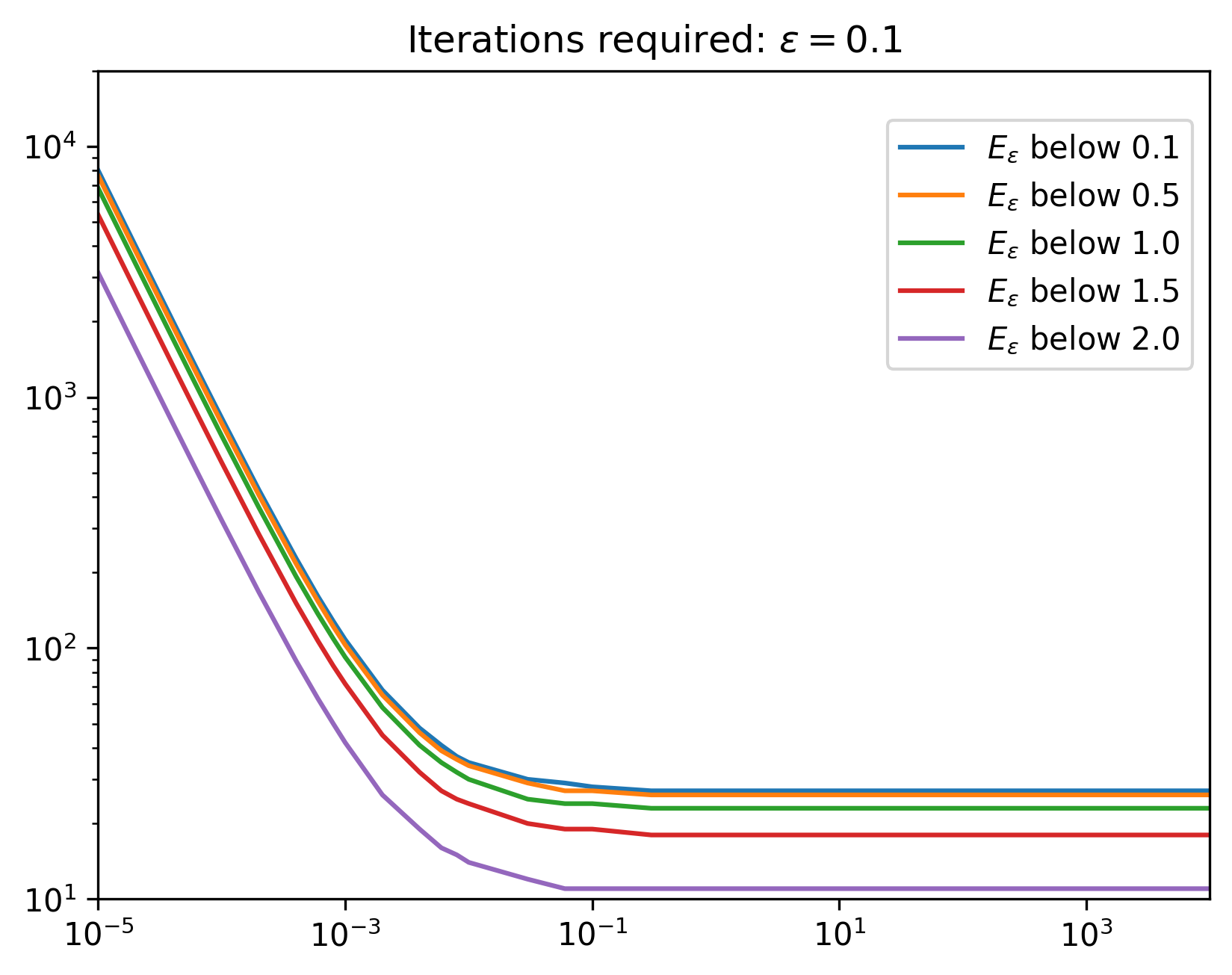}\hfill
\includegraphics[width =.32\textwidth]{./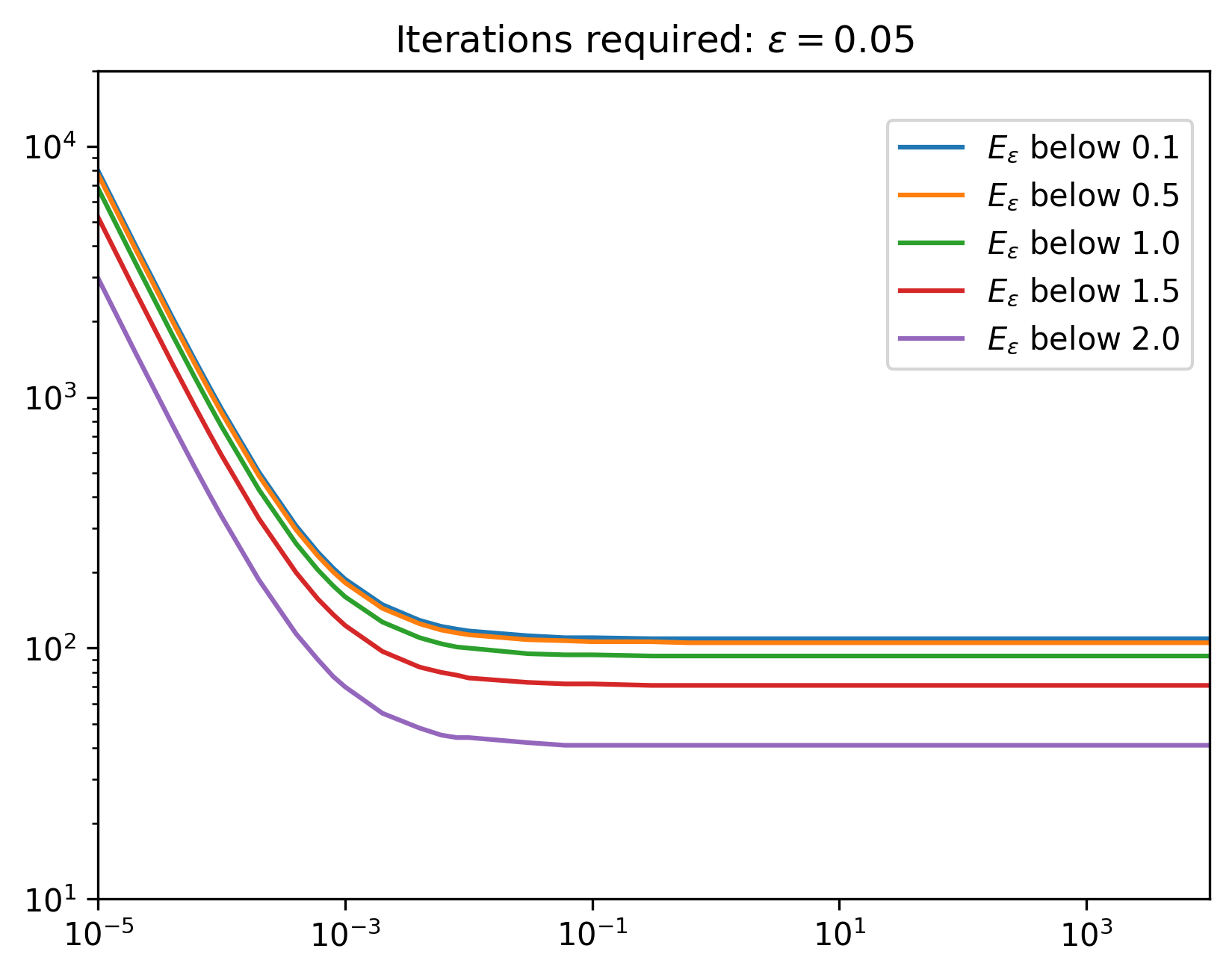}\hfill
\includegraphics[width =.32\textwidth]{./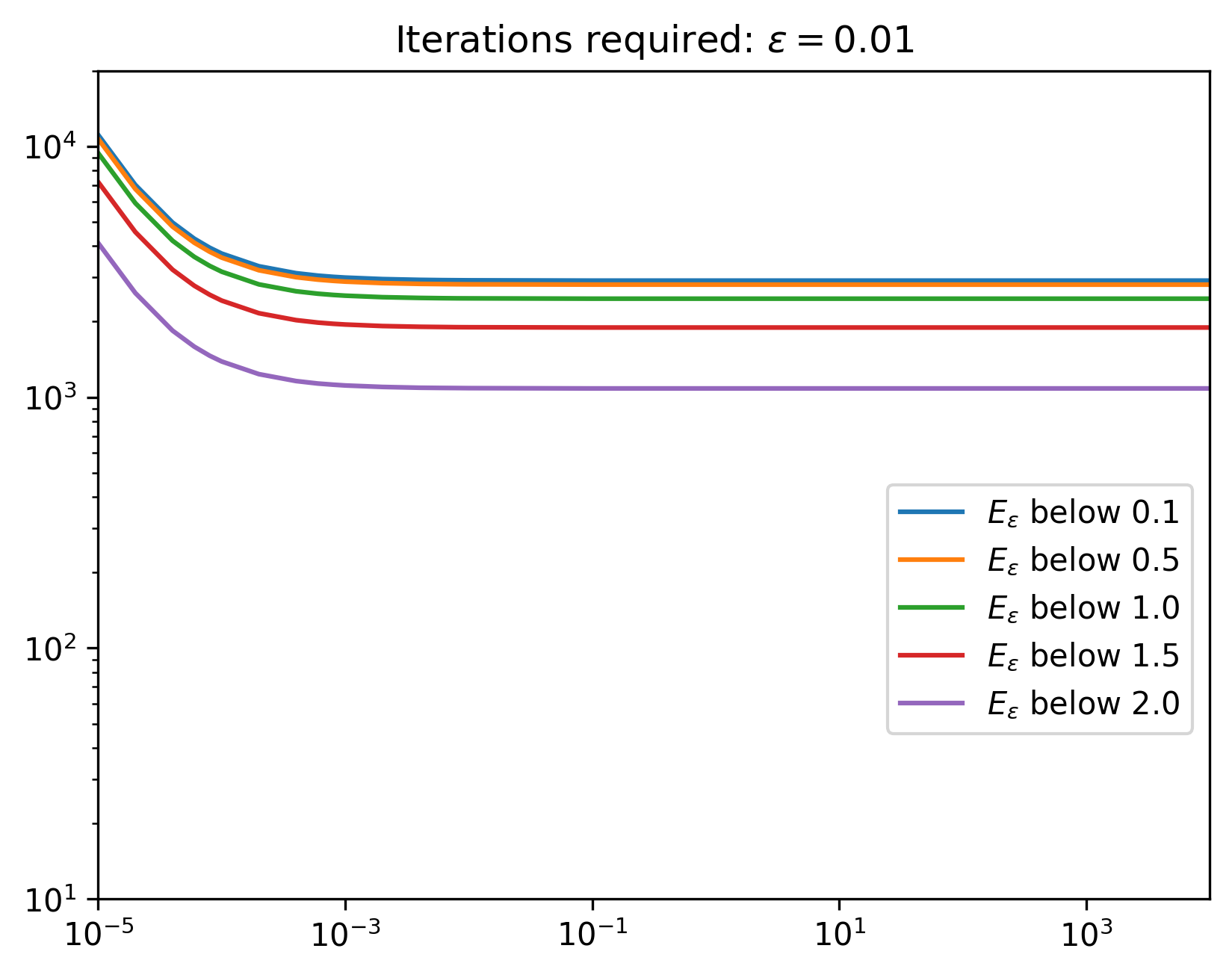}
\caption{
The number of convex-concave splitting gradient descent iterations required until $E_\eps$ falls below a given level with $\eps = 0.1$ (left), $\eps = 0.05$ (middle) and $\eps = 0.01$ (right) as a function of time step size. Initially, increasing the step size reduces the number of iterations required, but the effect tapers of at a (surprisingly small) finite threshold. The threshold is lower for smaller $\eps$. The setting is otherwise as in Figure \ref{figure explore eps and tau}.
\label{figure no of iterations}
}
\end{figure}

\section{Singular Potentials}\label{section barrier potentials}

In this section, we consider two other classes of potentials with a particularly simple convex part. Our previous choice to study potentials of the form $W(u) = u^2 + W_{conc}(u)$ was motivated by the observation that the convex part of the energy $E_\eps$ can be taken to be quadratic, simplifying the minimization problem in the implicit part of the time step to solving {\em the same} linear system with varying right hand sides in each step. However, our analysis demonstrates that while the steps are fast, the resulting scheme is slow due to overstabilization. 

Lemma \ref{lemma energy descent} suggests that functionals for which $W_{vex}$ is strongly convex or $W_{conc}$ is strongly concave are always going to be slow. In this section, we consider the opposite extreme where the second derivative is maximally concentrated. Namely, we consider `barrier potentials'  
\[
W_{bar}(u) = 1-|u| + \infty \cdot 1_{\{|u|>1\}}\qquad\text{ and $\ell^1$-type potentials}\quad W_\alpha(u) = \begin{cases} 1-|u| & |u|\leq 1\\ \alpha(|u|-1) & |u|>1\end{cases}
\]
for $\alpha \geq 0$, which have second derivative zero outside of $\{-1,0,1\}$. These potentials pose analytic challenges considering e.g.\ the existence of solutions to the Allen-Cahn equation. They are considered primarily as an analytic complement for the opposite extreme in the class of potentials, but we note that non-smooth potentials are known to have favorable numerical properties in certain applications \cite{bosch2018generalizing}. For analytic results in this direction, see e.g.\ \cite{budd2019graph, budd2021classification}. 

We illustrate why we consider both types of potentials simultaneously for the convex-concave splitting scheme with 
\[
W_{conc}(u) = 1-|u| \qquad\text{and}\quad W_{vex, bar}(u) = \infty\cdot 1_{\{|u|>1\}}, \quad W_{vex,\alpha} = (\alpha+1) \,\max\{|u|-1, 0\}
\]
respectively.

\begin{lemma}
Let $u_0\in H^1(\Omega)$ and $|u_0|\leq 1$ almost everywhere. Consider the functions 
\begin{align*}
u_\infty &= \argmin_u \int_\Omega \frac\eps2\,\|\nabla u\|^2 + \frac{W_{bar}'(u_0)\,u + \infty\cdot 1_{\{|u>1\}}}\eps \hspace{1.65cm}+ \frac1{2(\tau/\eps)}\,\|u-u_0\|^2\dx \\
u_1 &= \argmin_u\int_\Omega \frac\eps2\,\|\nabla u\|^2 + \frac{W_{\alpha}'(u_0)u + (\alpha+1)\max\{|u|-1, 0\}}\eps \:\:+ \frac1{2(\tau/\eps)}\,\|u-u_0\|^2\dx 
\end{align*}
where $u$ ranges over the same function classes in both lines: $H^1(\Omega)$ if Neumann or periodic boundary conditions are considered and an affine subspace if Dirichlet boundary conditions are considered.

If $\alpha\geq 0$ and the Dirichlet boundary condition (if present) takes values in $[-1,1]$, then $u_1 = u_\infty$. 
\end{lemma}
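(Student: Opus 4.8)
The plan is to show that the two minimization problems have the same minimizer by checking that $u_\infty$, the minimizer of the barrier problem, is automatically admissible and optimal for the $\ell^1$-type problem, and vice versa, using the key structural fact that both minimizers satisfy $|u|\leq 1$ almost everywhere. First I would observe that $u_\infty$ by definition satisfies $|u_\infty|\leq 1$ a.e., since otherwise the $\infty\cdot 1_{\{|u|>1\}}$ term makes the objective infinite while the competitor $u_0$ (which has $|u_0|\leq 1$) gives a finite value. On the set $\{|u|\leq 1\}$ the penalty term $(\alpha+1)\max\{|u|-1,0\}$ vanishes, and the linear terms agree because $W_{bar}'(u_0) = W_\alpha'(u_0) = -\sign(u_0)$ wherever $|u_0|\leq 1$ (which holds a.e.); hence the $\ell^1$-type objective evaluated at any $u$ with $|u|\leq 1$ a.e.\ coincides with the barrier objective, so $u_\infty$ is a minimizer of the $\ell^1$-type problem over the restricted class $\{|u|\leq 1\}$.

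The remaining point is to rule out that the $\ell^1$-type problem has a strictly better minimizer with $|u_1|>1$ on a set of positive measure. Here I would use a truncation/projection argument: given any competitor $u$, replace it by its truncation $\hat u = \max\{-1,\min\{1,u\}\}$. Truncation does not increase the Dirichlet energy $\int \frac\eps2\|\nabla u\|^2$ (standard, since $|\nabla \hat u|\leq |\nabla u|$ a.e.); it does not increase the $L^2$-distance to $u_0$ because $|u_0|\leq 1$ a.e.\ and truncation is a $1$-Lipschitz projection onto the convex set $[-1,1]$ applied pointwise, hence moves every value no farther from $u_0(x)\in[-1,1]$; and it does not increase the potential term, because on $\{|u|>1\}$ the integrand is $W_\alpha'(u_0)u + (\alpha+1)(|u|-1) = -\sign(u_0)u + (\alpha+1)(|u|-1)$, and comparing with $\hat u = \sign(u)$ on that set one checks $-\sign(u_0)u + (\alpha+1)(|u|-1) \geq -\sign(u_0)\sign(u)$ since $(\alpha+1)(|u|-1)\geq (|u|-1)\geq |u|-\sign(u_0)u\cdot\sign(u)\cdot\sign(u)$, i.e.\ the extra convex penalty more than compensates for the change in the linear term (using $\alpha\geq 0$). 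One must also treat the Dirichlet boundary condition: since the prescribed boundary data take values in $[-1,1]$, truncation preserves membership in the affine subspace. Therefore for every admissible $u$ there is an admissible truncated $\hat u$ with $|\hat u|\leq 1$ a.e.\ and no larger $\ell^1$-type objective, so the infimum over the full class equals the infimum over $\{|u|\leq 1\}$, which we already identified with the barrier problem. By uniqueness of the minimizer of each strictly convex problem, $u_1 = u_\infty$.

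The main obstacle I anticipate is the pointwise potential inequality on $\{|u|>1\}$: one needs the clean estimate that replacing $u$ by $\sign(u)$ there strictly decreases (or does not increase) the quantity $-\sign(u_0)u + (\alpha+1)(|u|-1)$, and the sign bookkeeping depends on whether $\sign(u)$ and $\sign(u_0)$ agree. Writing $t = |u|>1$ and $s = \sign(u)\sign(u_0)\in\{-1,+1\}$, the claim reduces to $-st + (\alpha+1)(t-1) \geq -s$, i.e.\ $(\alpha+1)(t-1)\geq s(t-1)$, which holds for $t>1$ precisely because $\alpha+1\geq 1\geq s$. This makes the argument go through for all $\alpha\geq 0$; the case $\alpha = 0$ is the borderline one and still works. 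The $\tau/\eps$ versus $\tau$ normalization of the proximal term is immaterial since it multiplies a term that truncation only decreases. Everything else — lower semicontinuity and coercivity giving existence, strict convexity giving uniqueness — is routine and can be cited from the convex-concave splitting framework already recalled in Section \ref{section splitting basics}.
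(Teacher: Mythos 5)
Your proposal is correct and takes essentially the same approach as the paper: observe that the two objectives coincide on the set $\{|u|\leq 1\}$, and show that the $\ell^1$-type minimizer must satisfy $|u_1|\leq 1$ a.e.\ by checking that pointwise truncation to $[-1,1]$ does not increase the Dirichlet energy, the $L^2$ proximal term (strictly decreases unless already truncated), or the potential term. The only cosmetic difference is the bookkeeping for the potential term on $\{|u|>1\}$: the paper treats $u_1\geq 1$ explicitly via $(W_\alpha'(u_0)+1+\alpha)u_1-(\alpha+1)$ with non-negative leading coefficient, while you package both signs at once through $s=\sign(u)\sign(u_0)$ and $t=|u|$, arriving at $(\alpha+1)(t-1)\geq s(t-1)$; your first inline attempt at this inequality is garbled (the chain $(|u|-1)\geq |u|-\sign(u_0)u$ is false when $\sign(u_0)\neq\sign(u)$), but the corrected $s,t$ computation you give afterward is right and is what actually carries the argument.
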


We define $u/|u|$ arbitrarily but consistently for $u=0$ in case $\{u_0=0\}$ is not a null set.

\begin{proof}
It suffices to show that $|u_1|\leq 1$ almost everywhere since $W_{bar}'(u_0) \equiv W_\alpha'(u_0)$, i.e.\ the two expressions coincide for all $u$ which take values in $[-1,1]$. Thus, if the $u$ can be seen to take values in $[-1,1]$ a priori also for the second problem, the minimization problems coincide. Consider 
\[
\tilde u_1 = \Pi_{[-1,1]}(u_1) = \frac{u_1}{\max\{|u_1|,1\}} = \min \big\{1, \max\{-1, u_1\}\big\}.
\]
Clearly $\|\tilde u_1 - u_0\|_{L^2} \leq \|u_1-u_0\|_{L^2}$ since $u_0$ takes values in $[-1,1]$, i.e.\ the difference is pointwise decreasing under projection. The inequality is strict unless $\tilde u_1\equiv u_1$. Additionally, we find that the Dirichlet energy of $\tilde u_1$ is lower than that of $u_1$ \cite[Satz 5.20]{dobrowolski2010angewandte}. Finally, we note that if $u_1\geq 1$, then
\begin{align*}
 W_\alpha'(u_0)u_1 +  (\alpha+1)\max\{|u_1|-1, 0\} = \big(W_\alpha'(u_0)+1 + \alpha\big) u_1 - (\alpha+1) 
	\geq \big(W_\alpha'(u_0)+1 + \alpha\big) - (\alpha+1)
\end{align*}
since $W_\alpha'\in [-1,1]$, i.e.\ the first term is non-negative. This shows that also the final term in the energy is decreasing when replacing $u_1$ by $\tilde u_1$ (strictly if $\alpha>0$ and $|u_1|$ is not bounded by $1$ almost everywhere).
\end{proof}

The same statement remains valid for any $W=W_{vex}+W_{conc}$ satisfying $|W_{conc}'|\leq 1$ inside $(-1,1)$ and $u\,W_{vex}'(u)\geq (|u|-1)$ outside $(-1,1)$. Thus, we can either minimize 
\[
\frac1{2\tau}\|u-u_n \|_{L^2(\Omega)}^2 + \int_\Omega \frac12\,\|\nabla u\|^2 + \frac{W_{conc}'(u_n)}{\eps^2}\,u\dx 
\]
in the class $\{u \in H^1(\Omega) : |u|\leq 1\}$ (`box constraints') or employ an $L^1$-penalty which is only active if $|u|>1$. Naturally, either minimization problem is more difficult than for potentials with quadratic convex part, and thus only worth the additional investment if the effective time step limitation is less severe. In a simple but non-trivial example, we illustrate that this cannot be expected, again in the simpler case $\tau = +\infty$ and for open sets in Euclidean spaces $\R^d$ with $d\geq 3$. Considering Figure \ref{figure explore eps and tau}, we conjecture that the same is true in $\R^2$ and for $\tau < \infty$.

\begin{example}\label{example obstacle}
Denote by $B_r= B_r(0)$ the ball or radius $r$ centered at the origin. Assume that $u_0\in H^1(B_R)$ is any function such that $1\geq u_0>0$ in $B_r(0)$ and $-1\leq u_0<0$ in $B_R\setminus \overline{B_r}$ for $R>r>0$, i.e.\ $W_{bar}'(u_0) = -1$ on $B_r$ and $W_{bar}'(u_0) = 1$ on $B_R\setminus B_r$. We consider the limiting convex-concave splitting time-stepping scheme for $\tau = +\infty$, for which the next step from $u_0$ is characterized as the minimizer of the functional
\[
\int_{B_R}\frac\eps2 \,\|\nabla u\|^2 - \frac{W_{bar}'(u_0)}\eps\,u\dx,
\]
over
\[
\left\{u\in H^1(\Omega) : |u|\leq 1\right\} \qquad\text{or}\quad \{u \in H^1(B_R) : |u| \leq 1 \text{ and } u=-1\text{ on }\partial B_R\},
\]
depending on whether we consider Dirichlet or Neumann boundary conditions. This is the perhaps simplest instance of a {\em double obstacle problem}, one of the most classical variational inequalities \cite{kinderlehrer2000introduction}.

It is well-known that a unique solution to the double-obstacle problem exists and is $C^{1,1}$-regular under very general circumstances, but in general not $C^2$-regular \cite{gerhardt1985global, lee2019regularity}. The solution satisfies the Euler-Lagrange equation $-\Delta u = -W_{bar}'(u_0)/\eps^2$ on the `free' set $\{x: u(x) \in (-1,1)\}$ and the inequalities $-\Delta u \geq -W_{bar}'(u_0)/\eps^2$ on the coincidence set $\{u = -1\}$ with the lower obstacle, $-\Delta u\leq - W_{bar}'(u_0)/\eps^2$ on the coincidence set $\{u=1\}$ with the upper obstacle, which express that there is no energetic incentive to `detach' from the obstacle in the admissible direction, i.e.\ into $(-1,1)$.

In particular, since $\Delta u\equiv 0$ in either coincidence set, the solution $u$ coincides with the upper obstacle at most on a subset of $B_r$ and with the lower obstacle at most in $B_R\setminus B_r$. In our case, we posit that the problem can be summarized as
\[
\begin{pde}
u &= 1 &\text{in }B_{r_i}\\
-\Delta u &= \frac1{\eps^2} &\text{in }B_r\setminus B_{r_i}\\
-\Delta u &= -\frac1{\eps^2} &\text{in }B_{r_o}\setminus B_{r}\\
u &= -1 &\text{in }B_{R\setminus r_o}
\end{pde}
\]
for radii $0 < r_i < r <r_o < R$ if $R\gg r\gtrsim 1$, i.e.\ if the radii are large enough to require the presence of both coincidence sets. We will determine $r_i, r_o$ below and see that the restriction is in fact minimal and that naturally $r_i, r_o = r+ O(\eps)$.

Note that only $W_{bar}'(u_0)$ is used and the exact function $u_0$ is irrelevant. The problem therefore becomes radially symmetric and since the solution is unique, it exhibits the same symmetry, even if $u_0$ does not. This simplification only arises for $\tau = +\infty$. By abuse of notation, we denote $u(s) = u(s\,e_1)$.\footnote{\ For ourradial solutions it is easy to see that the variational inequality would be violated if $u$ were not $C^1$-smooth at the spheres $\partial B_s$ with $s\in \{r_i, r, r_o\}$ without an appeal to abstract theory. For instance, since $u\leq 1$, $-\Delta u$ would be a negative measure at the sphere $\partial B_{r_i}$ where the maximum is attained, contradicting the variational inequality and incentivizing `detaching'.} 

We can solve the problem explicitly, using the fact that the only radially symmetric harmonic functions on $\R^d$ are constants and the fundamental solution 
\[
G_d(x) = \begin{cases} \frac1{2\pi} \,\log(\|x\|) & d=2\\ -\,\frac{1}{d(d-2)\,\omega_d} \|x\|^{2-d} & d\geq 3\end{cases}
\] 
of the Poisson equation where $\omega_d$ denotes the volume of the unit ball in $d$ dimensions, and that $\Delta \|x\|^2 = 2d$. There is no other radial solution $v$ to the equation $\Delta v= 1$ other than a shift by a constant of $G_d$. The problem can be solved explicitly by an elementary but lengthy calculation which we present in Appendix \ref{appendix obstacle}. See Figures \ref{figure obstacle} and \ref{figure comparison to optimal} for an illustration of the transitions found by the convex-concave splitting with varying $d$ and $\eps$ and Figure \ref{figure radii varying eps} for an illustration of the inner radius $r_i$, the outer radius $r_o$ and the `new' radius $r_{new} = u^{-1}(0)$ as a function of $\eps$. Evidently, $r_{new,\eps} = r + O(\eps^2)$, as for potentials with quadratic convex part. For an analytic demonstration, see Appendix \ref{appendix obstacle}. A numerical approximation for the potential with smoother concave part $1-u^2$ is given in Figure \ref{figure other potentials} and Appendix \ref{appendix singular numerics}.

\begin{figure}
\includegraphics[width =.24\textwidth]{./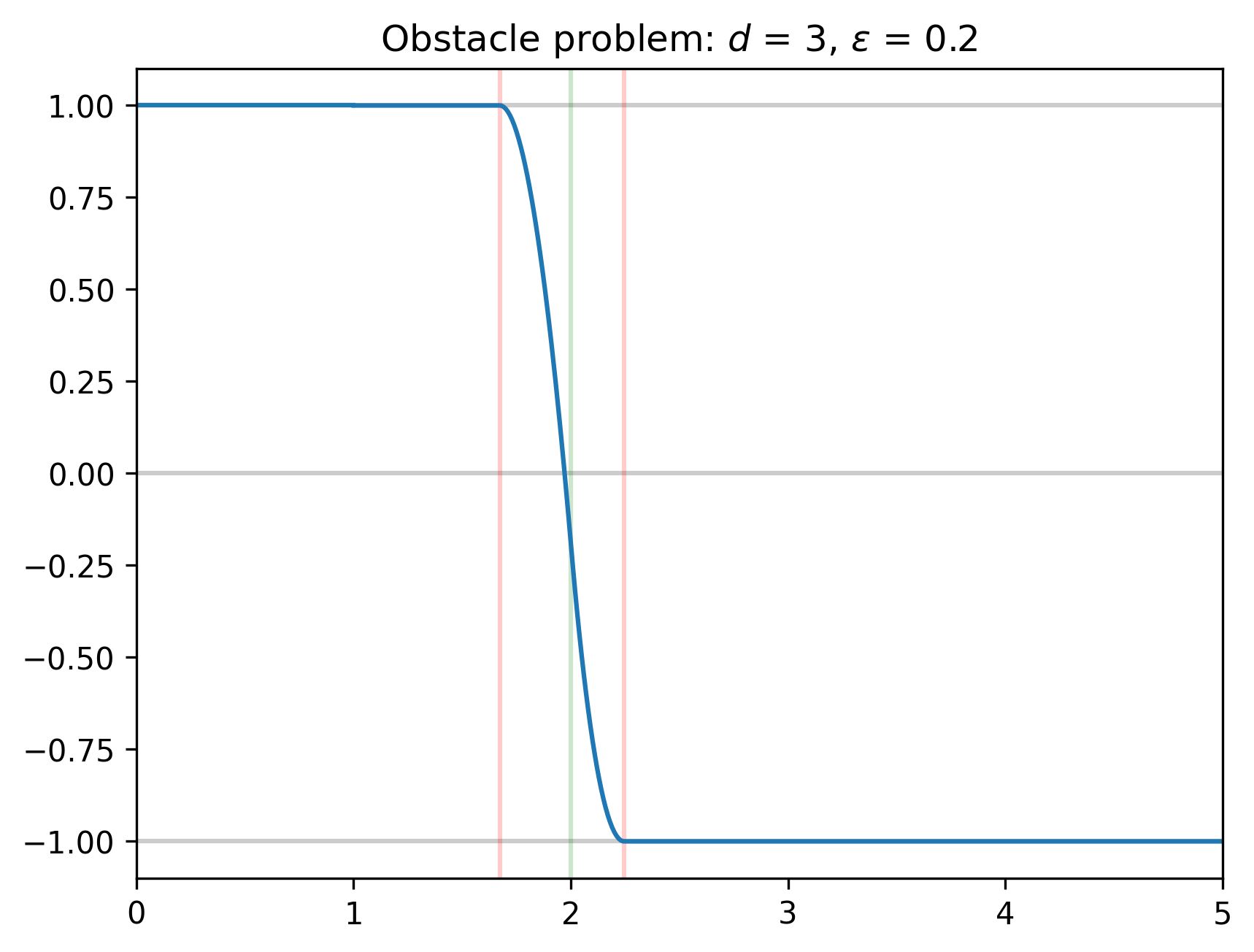}\hfill
\includegraphics[width =.24\textwidth]{./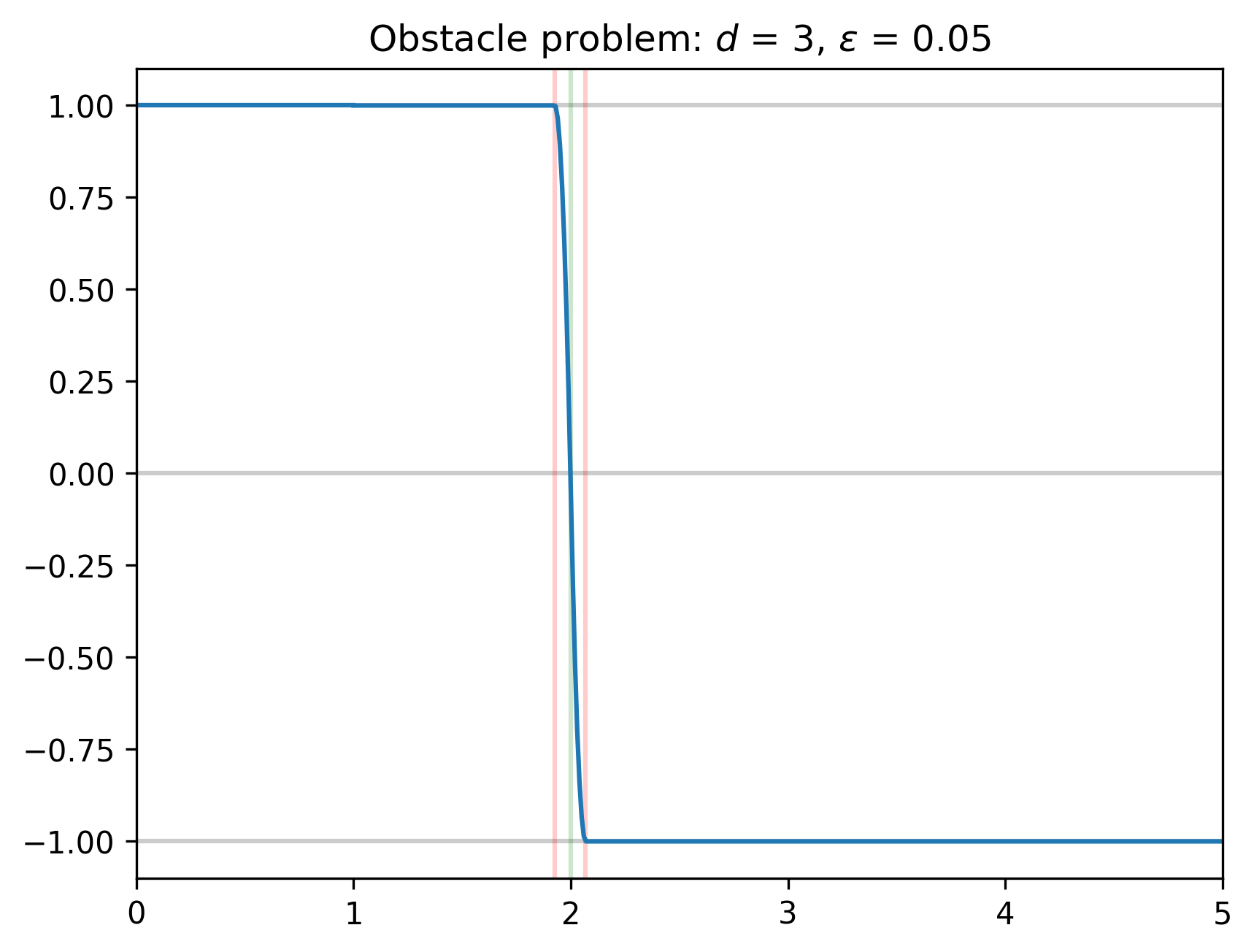}\hfill
\includegraphics[width =.24\textwidth]{./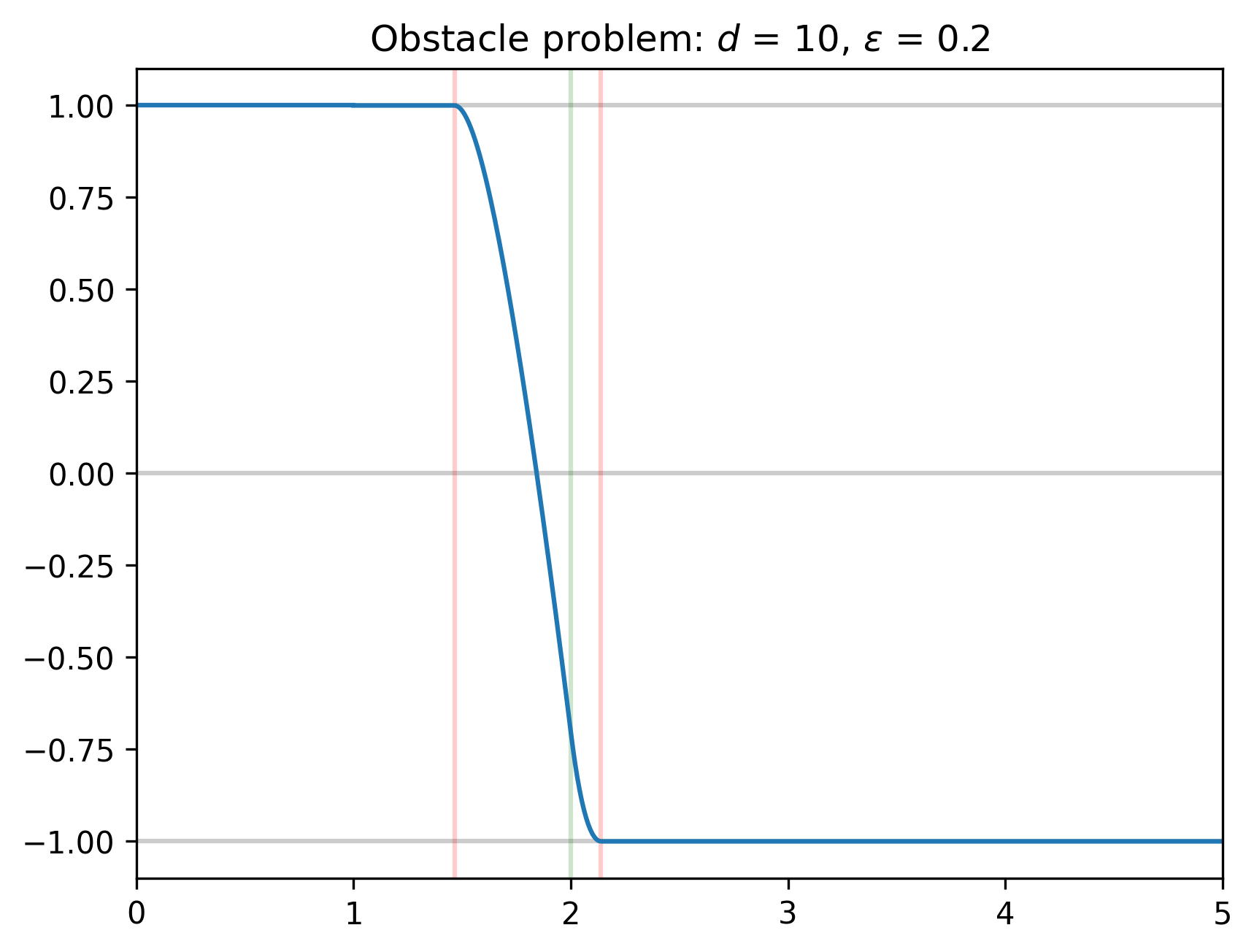}\hfill
\includegraphics[width =.24\textwidth]{./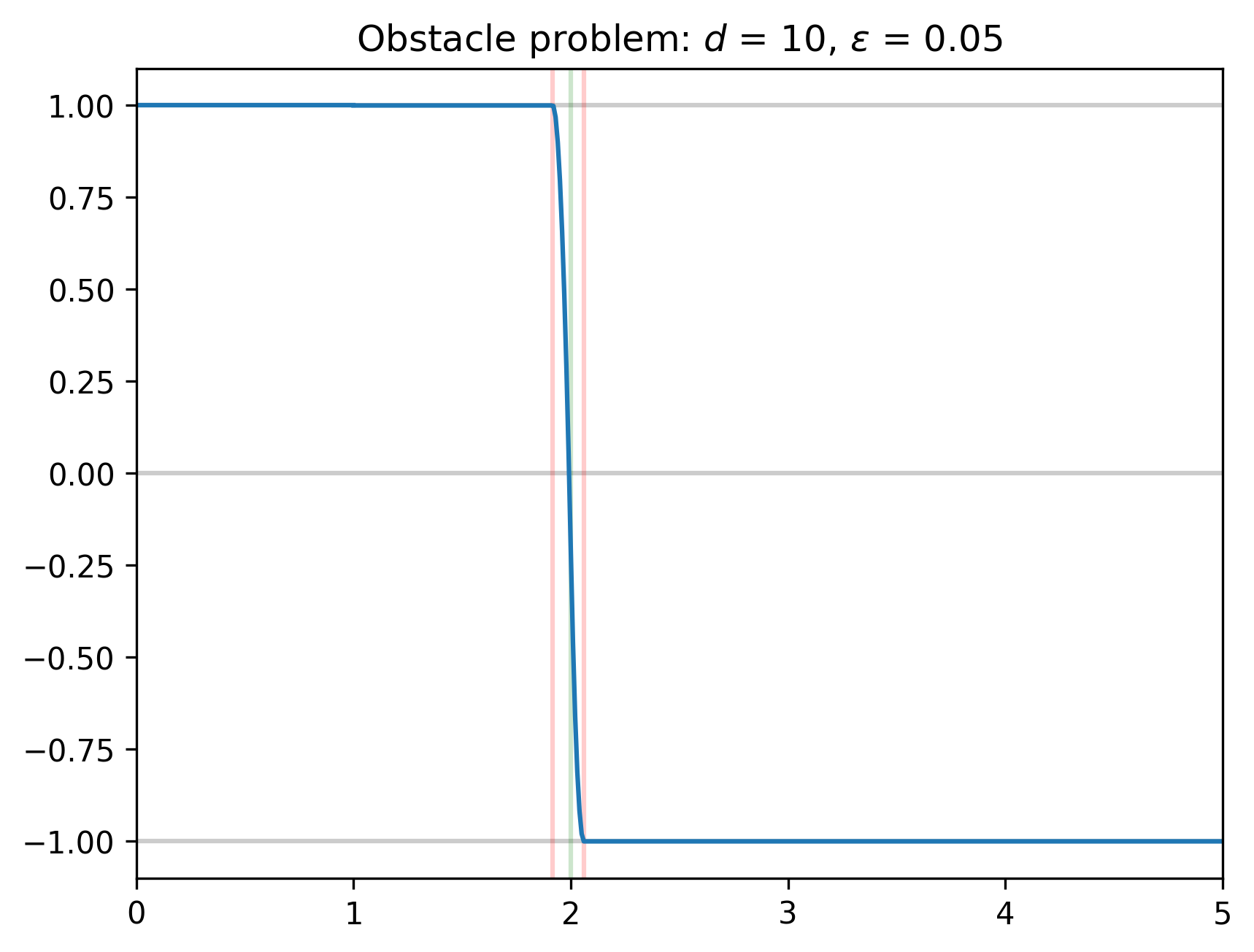}\hfill
\caption{
Solutions to the time-stepping problem of Example \ref{example obstacle} for the evolution of a ball with radius $2$ in radial direction in dimension $d=3$ (left two) and dimension $d=10$ (right two) with varying values of $\eps$. In the first and third plot, we select $\eps = 0.2$ large, in the second and fourth smaller with $\eps = 0.05$. The initial radius $r=2$ is marked by a vertical green line, the radii $r_i$ and $r_o$ are marked by vertical red lines. The intersection of $u$ with the (grey) line $y=0$ moves much farther from the green line for large $\eps$.
\label{figure obstacle}
}
\end{figure}

\begin{figure}
\includegraphics[width =.24\textwidth]{./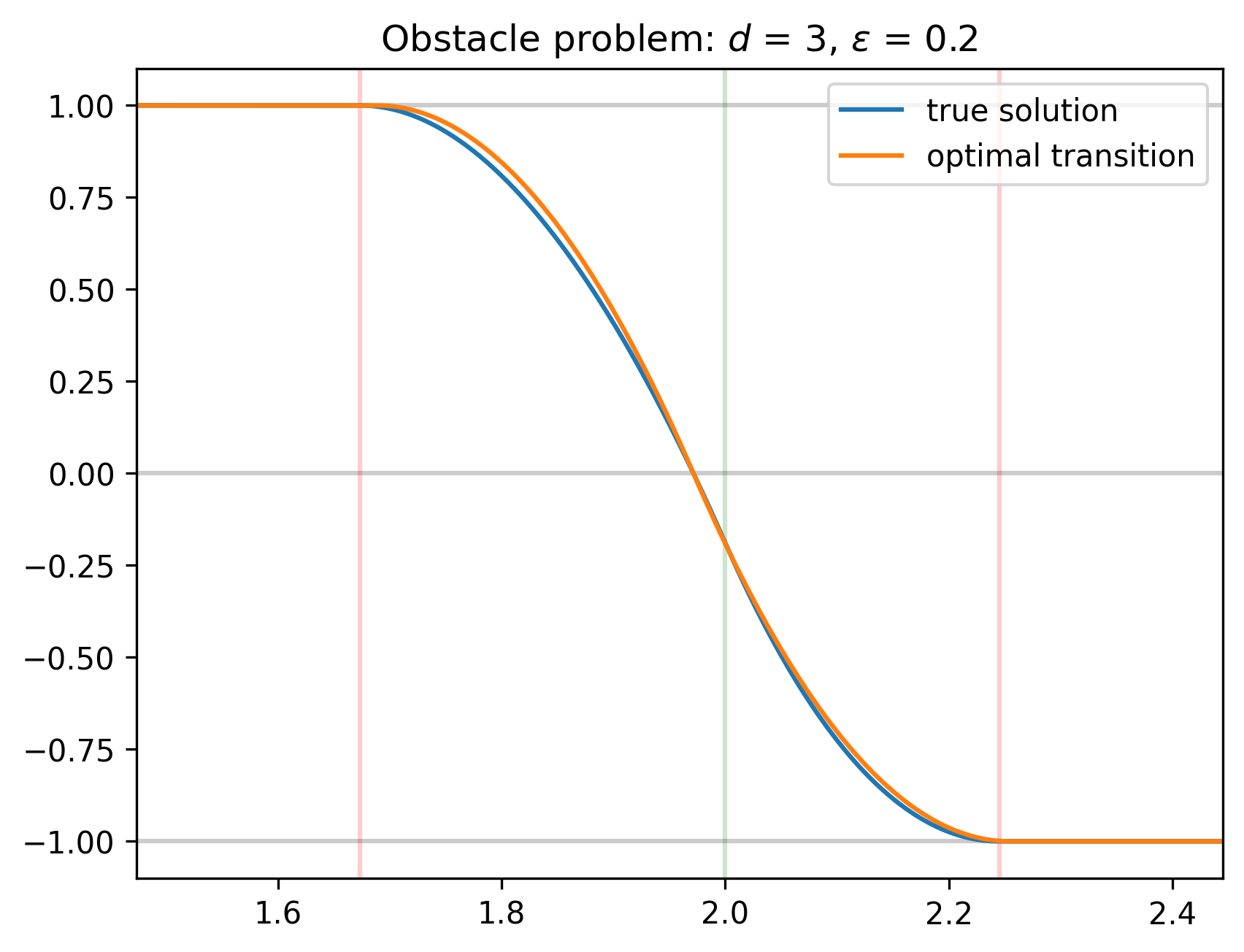}\hfill
\includegraphics[width =.24\textwidth]{./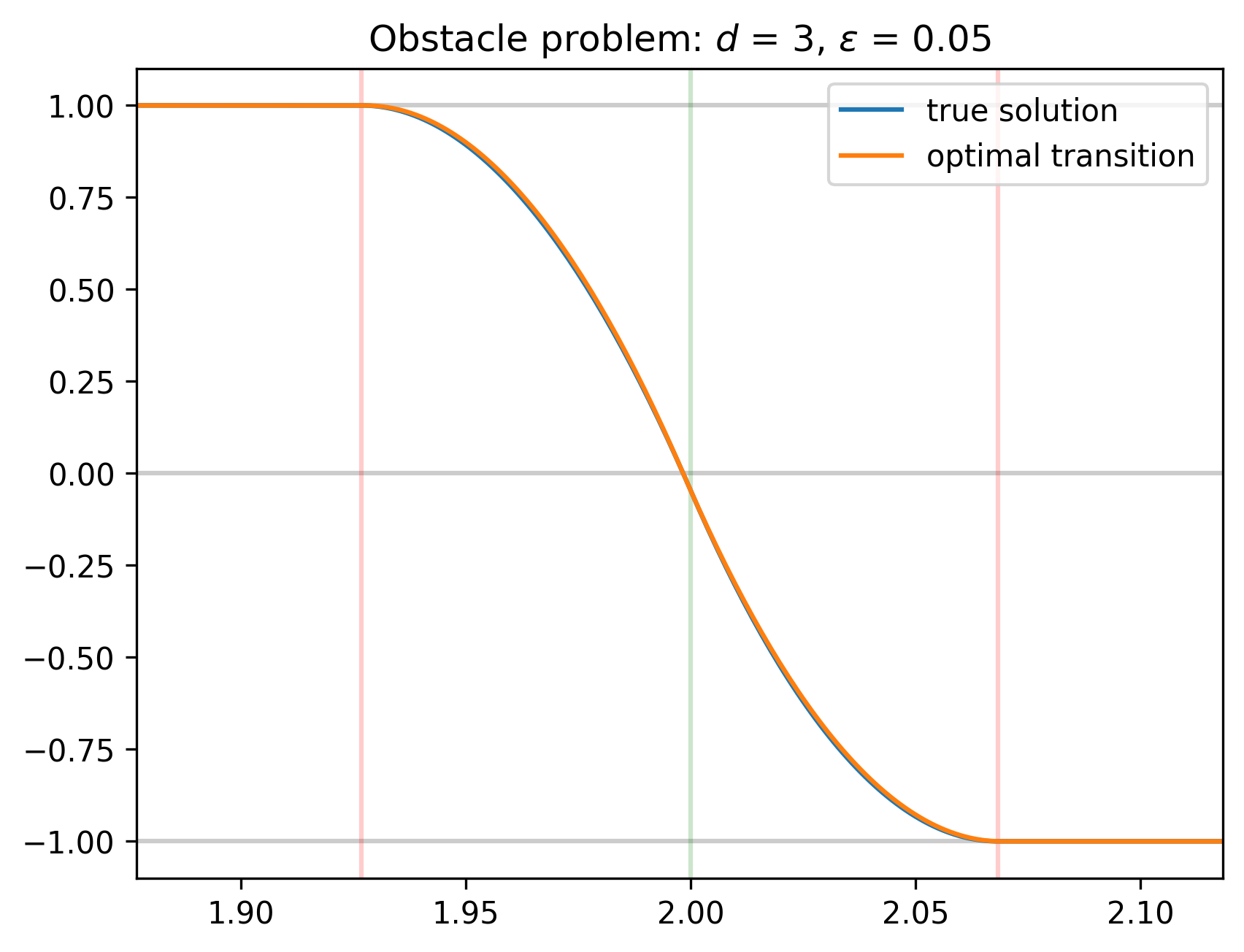}\hfill
\includegraphics[width =.24\textwidth]{./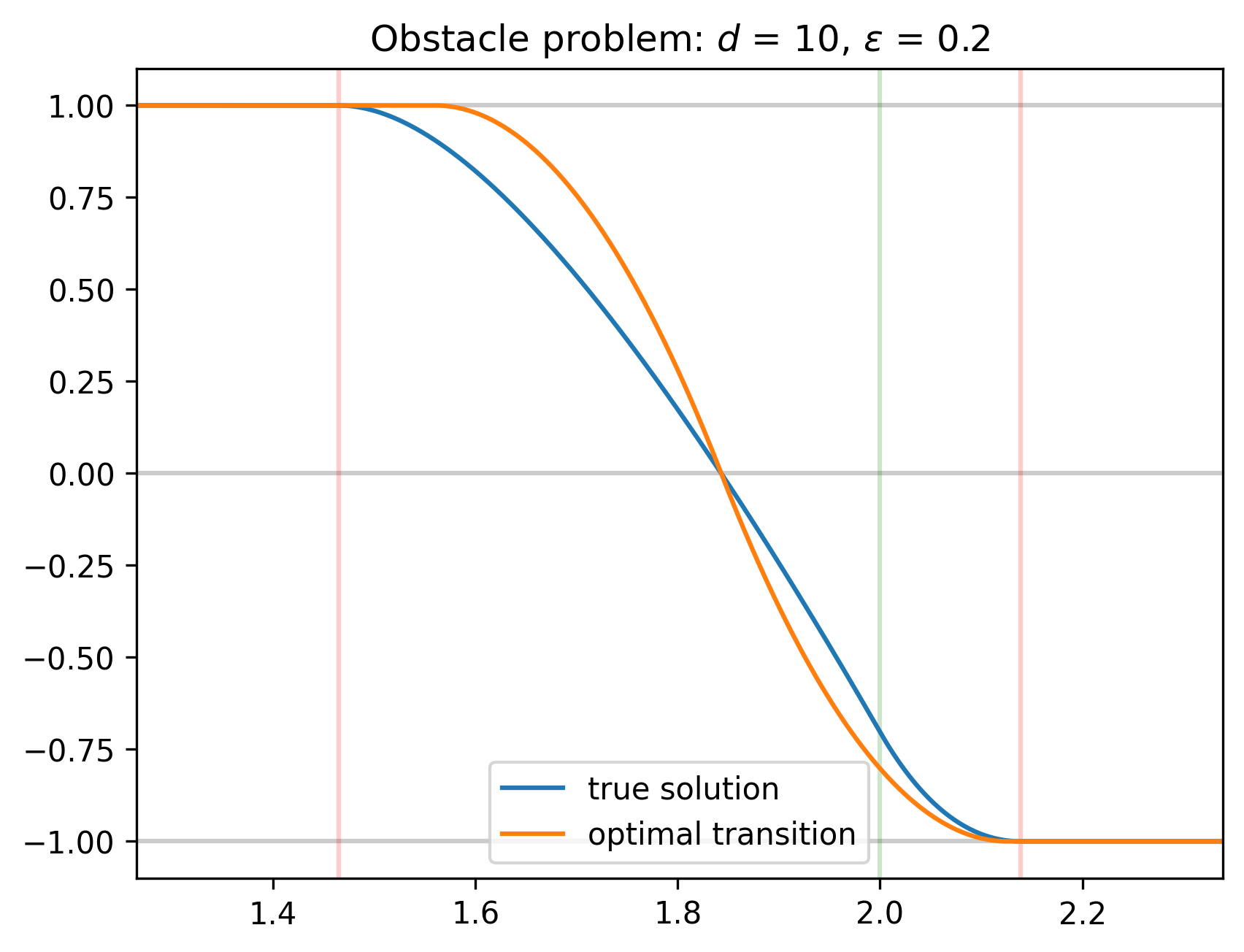}\hfill
\includegraphics[width =.24\textwidth]{./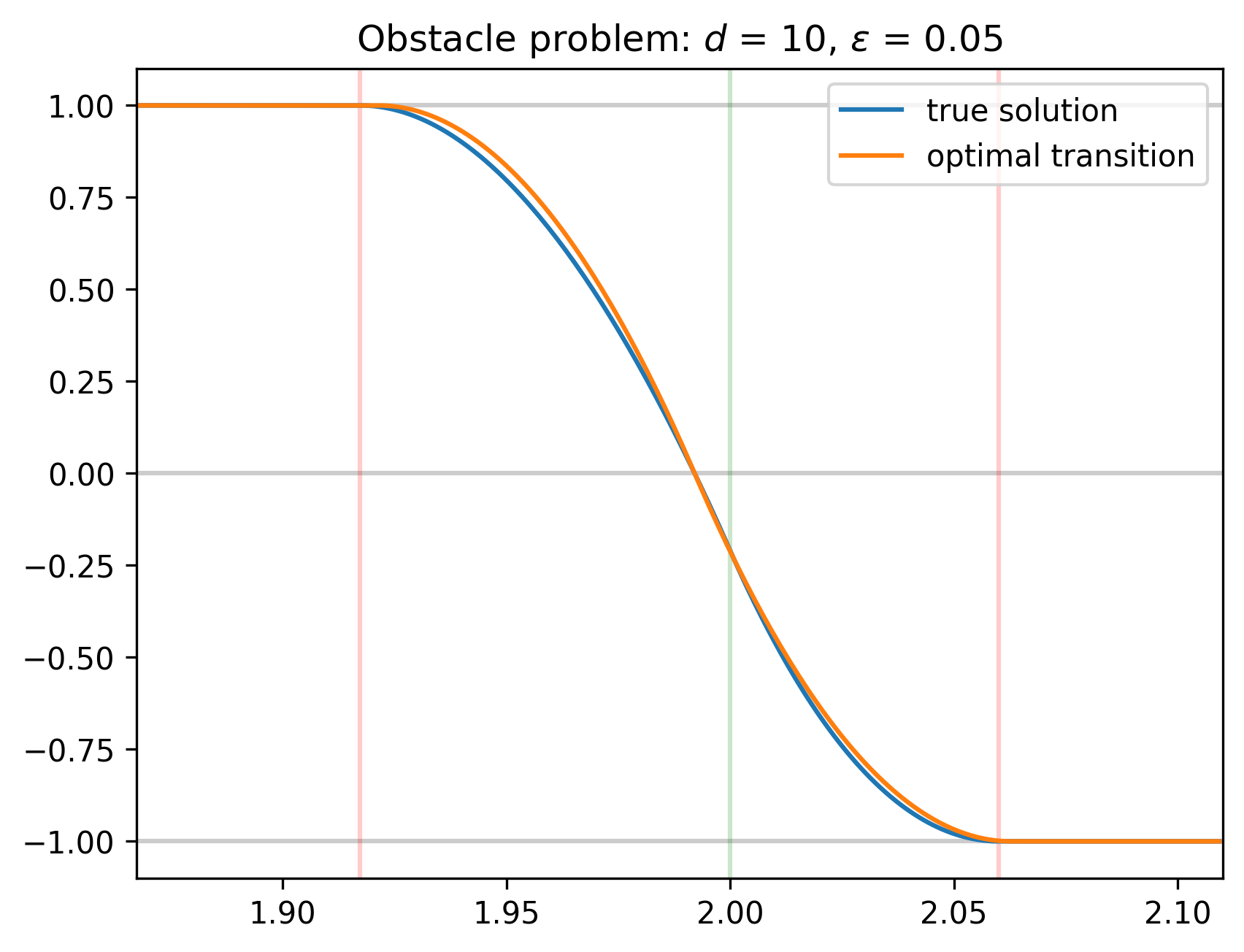}\hfill
\caption{
Solutions to the time-stepping problem of Example \ref{example obstacle} with barrier potentials, zoomed in to length scale $\eps$, as well as suitably scaled versions of the optimal transition, centered at the $x$-axis intercept of $u$. The initial condition is a ball of radius $r=2$ in three dimensions (left two) and dimension $d=10$ (right two). In the first and third plot, we select $\eps = 0.2$ large, in the second and fourth $\eps = 0.05$. The initial radius $r=2$ is marked by a vertical green line, the radii $r_i$ and $r_o$ are marked by vertical red lines. Notably, $u$ agrees with the optimal transition to high degree if $\eps$ is small or $d=3$ and $\eps$ is moderately small.
\label{figure comparison to optimal}
}
\end{figure}

\begin{figure}
\includegraphics[width =.32\textwidth]{./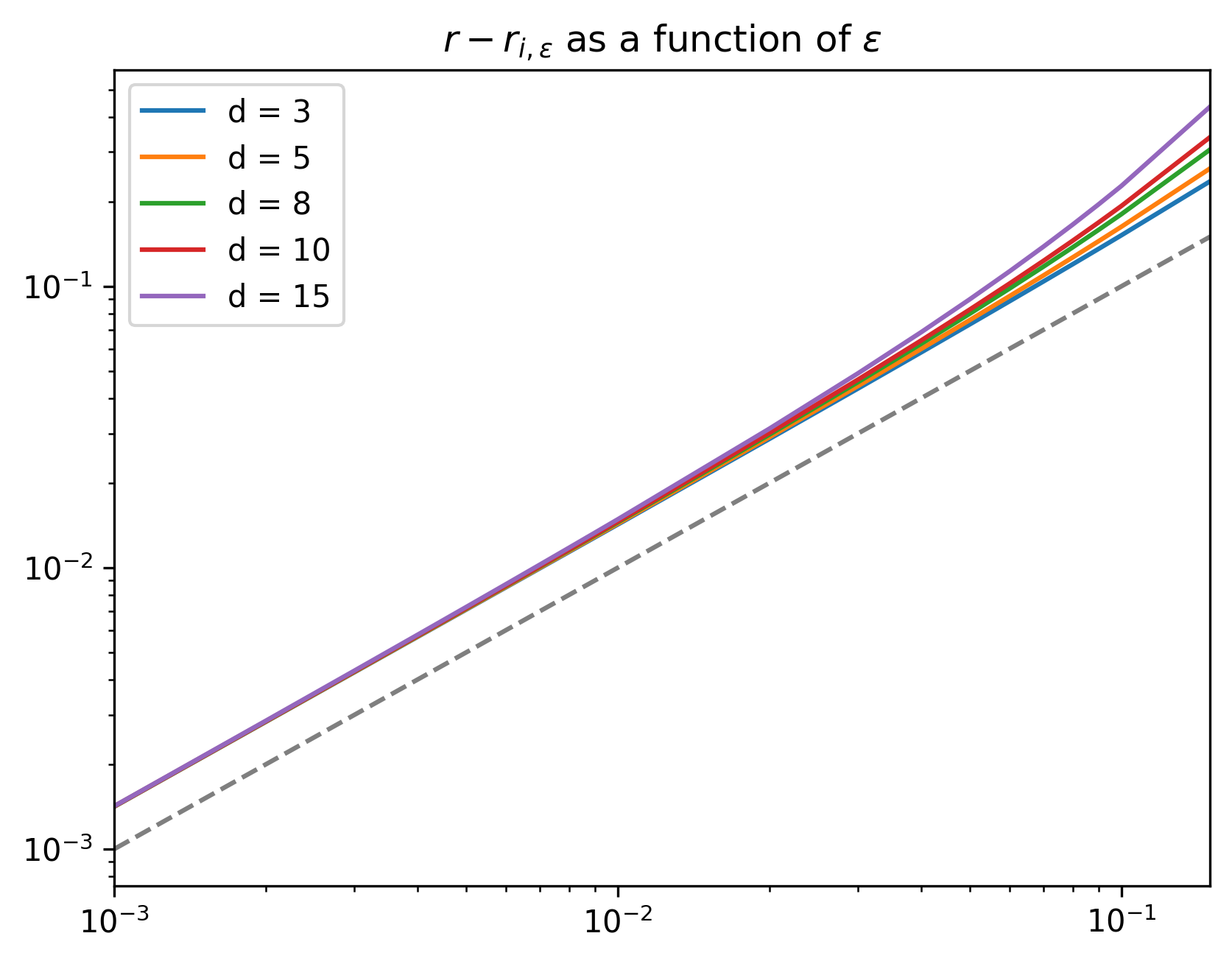}\hfill
\includegraphics[width =.32\textwidth]{./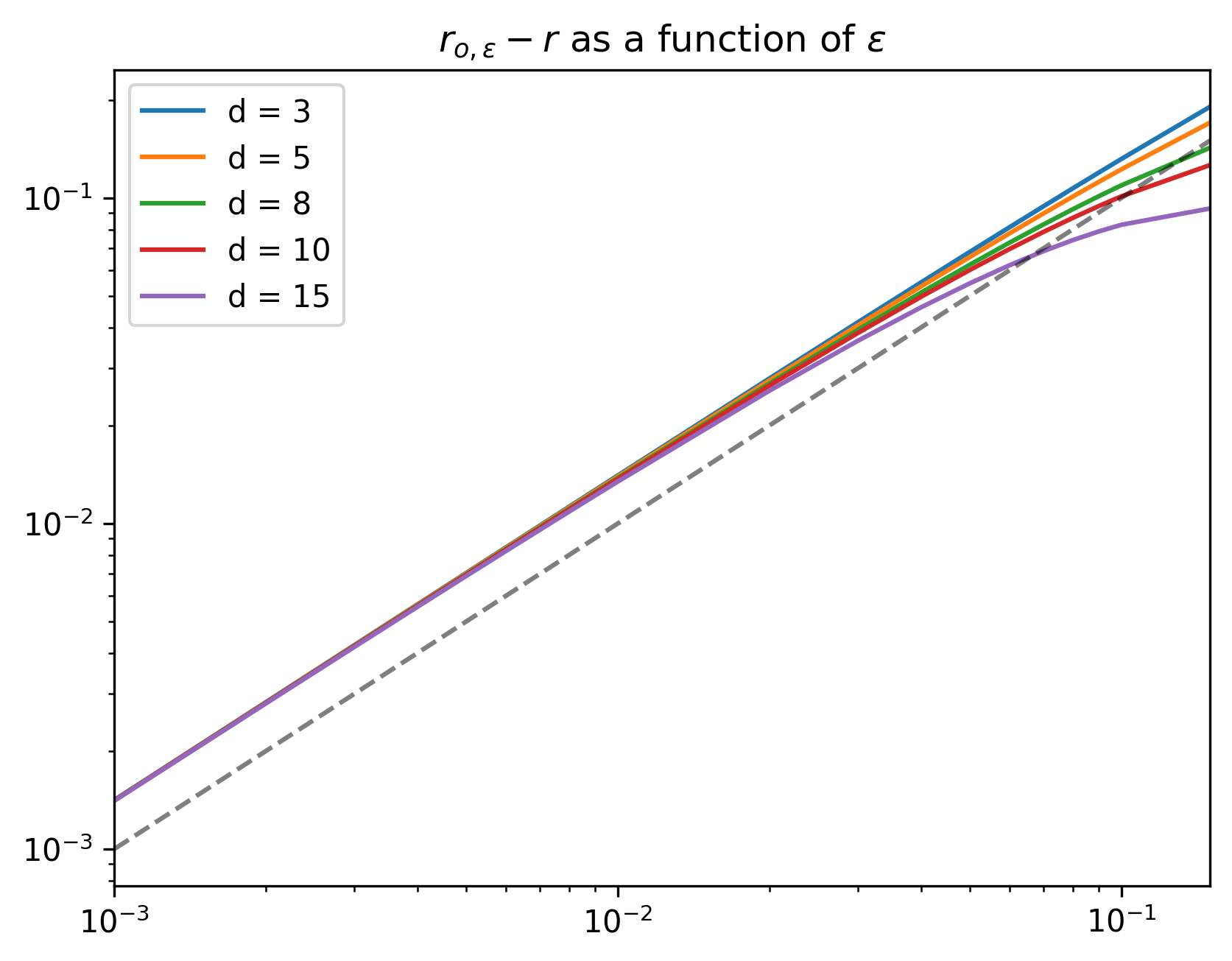}\hfill
\includegraphics[width =.32\textwidth]{./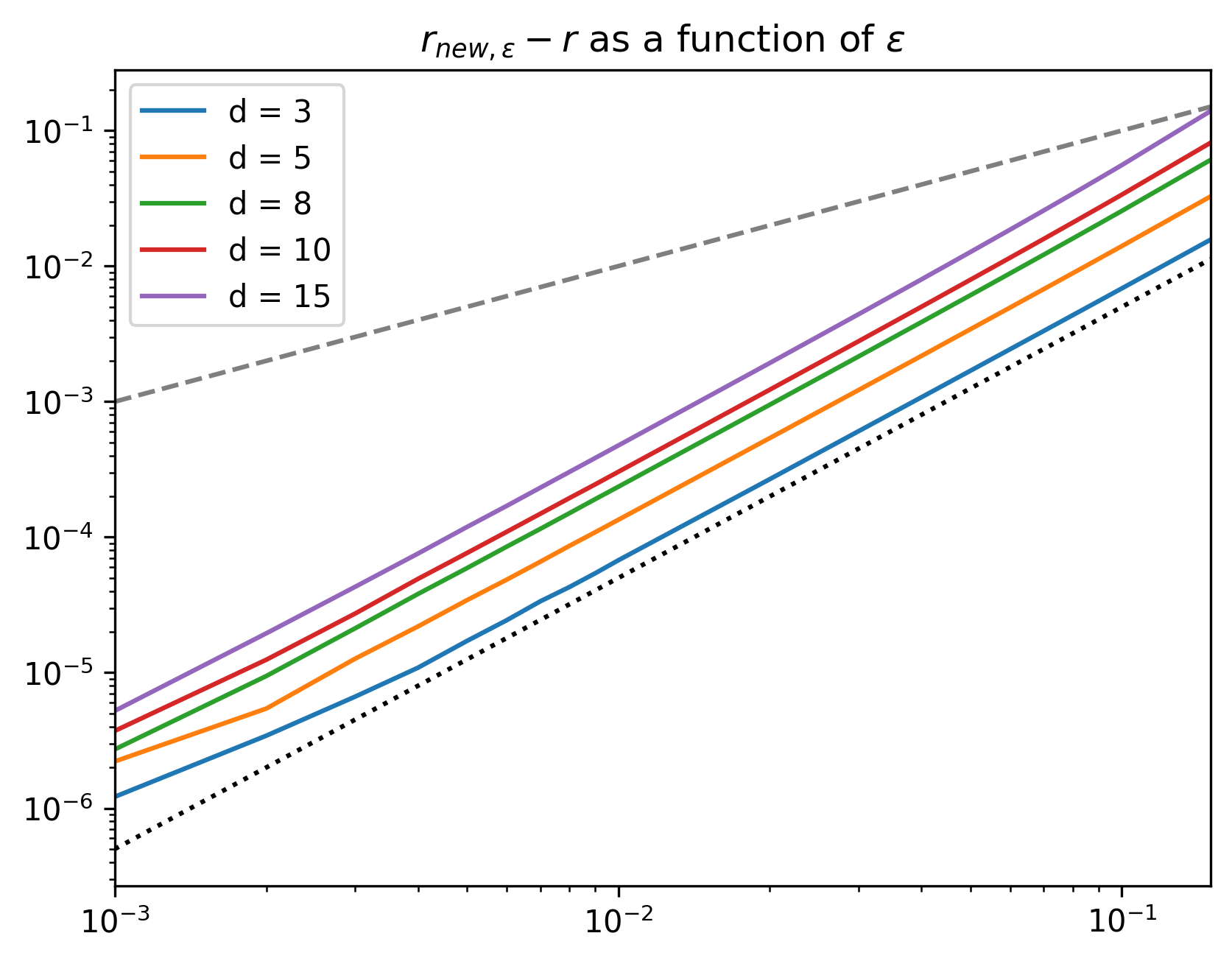}
\caption{
$r-r_{i,\eps}$ (left), $r_{o,\eps}-r$ (middle) and $r-r_{new,\eps}$ (right) as a function of $\eps$ for various dimensions. The dashed black line $\psi(\eps) = \eps$ is given for reference. Clearly, the distance of both $r_i$ and $r_o$ from $r$ grows {\em linearly} with $\eps$, independently of dimension. This is to be expected for a function which resembles an optimal profile for a radius within distance $O(\eps)$ from $r$. For $r_{new}-r$, on the other hand, the growth is {\em quadratic} in $\eps$. For reference, we give the additional line $\tilde\psi(\eps) = \eps^2/2$ (dotted, black) in the right plot. We selected $r=2$ for all experiments as in Figures \ref{figure obstacle} and \ref{figure comparison to optimal}.
\label{figure radii varying eps}
}
\end{figure}

\end{example}

\section{The standard potential}\label{section numerical}
In this section, we consider the classical potential 
\[
W(u) = (u^2-1)^2 = \underbrace{u^4}_{=:W_{vex}} + \underbrace{1-2u^2}_{=:W_{conc}}
\]
numerically.
As previously, we implement the Laplacian in the Fourier domain on a uniform $n\times n$-grid with $n=512$ with periodic boundary conditions, i.e.\ the spatial resolution is $h = 1/512 \approx 0.002$ for the unit square. The initial condition is $2\cdot \chi_E - 1$ where $E$ is a circle of radius $r_0 = 0.4$. 

In each time step, a convex minimization problem must be solved for the partially implicit time step. In our experiments, this was implemented by Newton-Raphson iteration. While we did not optimize code for performance, we note that a single time-step with this potential is expected to take significantly higher computational effort than using a potential with quadratic convex part or a barrier potential.

Empirically, we observe the splitting to be not just $\eps$-slow as guaranteed by Corollary \ref{corollary simple}, but $\eps^2$-slow in Figure \ref{figure other potentials}. The nominal time-step size is $\tau = 10^5$ as in Figure \ref{figure explore eps and tau}.

The optimal transition for $W$ satisfies
\[
\begin{pde} u' &= \sqrt{2(1-u^2)^2} & x\neq 0\\ u&= 0 &x=0\end{pde} \qquad \Ra\quad u'(x) = \sqrt{2}\,\big(1-u^2(x)\big)\qquad \Ra\quad u(x) = \tanh(\sqrt 2\,x).
\]
If we define the width of the interface as the width of the transition from e.g.\ $-0.95$ to $0.95$, we find that the transition width in the $\eps$-scaling is
\[
2\cdot\frac{\tanh^{-1}(0.95)}{\sqrt 2} \,\eps \approx 2.6\eps.
\]
Since the smallest $\eps$ considered in our experiments is $\eps = 0.01$, the transition width is larger than $0.026$ and a transition is resolved over approximately 10 grid cells. Such a resolution is generally considered sufficient for numerical purposes.

\begin{figure}
\includegraphics[width = .33\linewidth]{./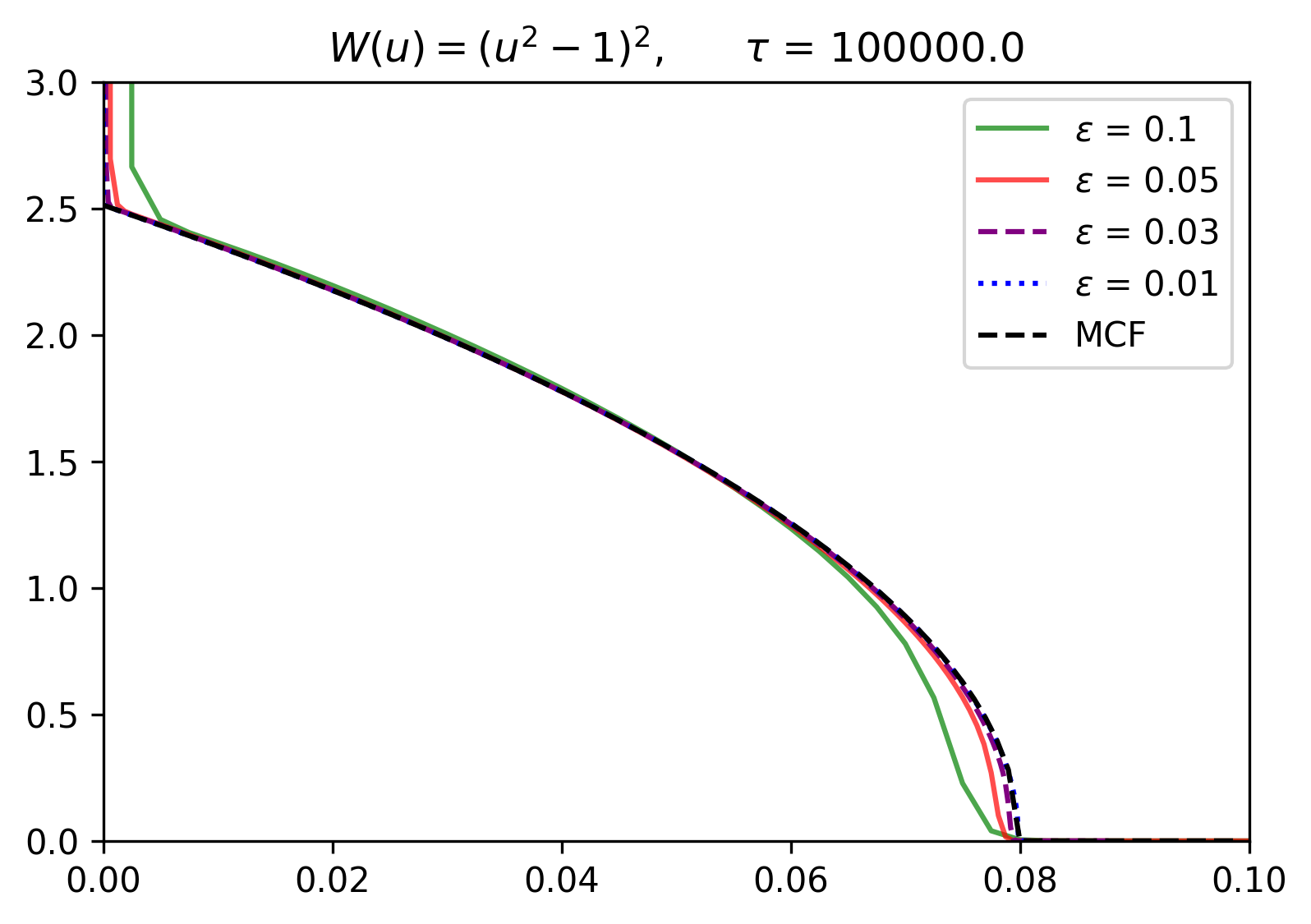}\hspace{2mm}
\includegraphics[width = .33\linewidth]{./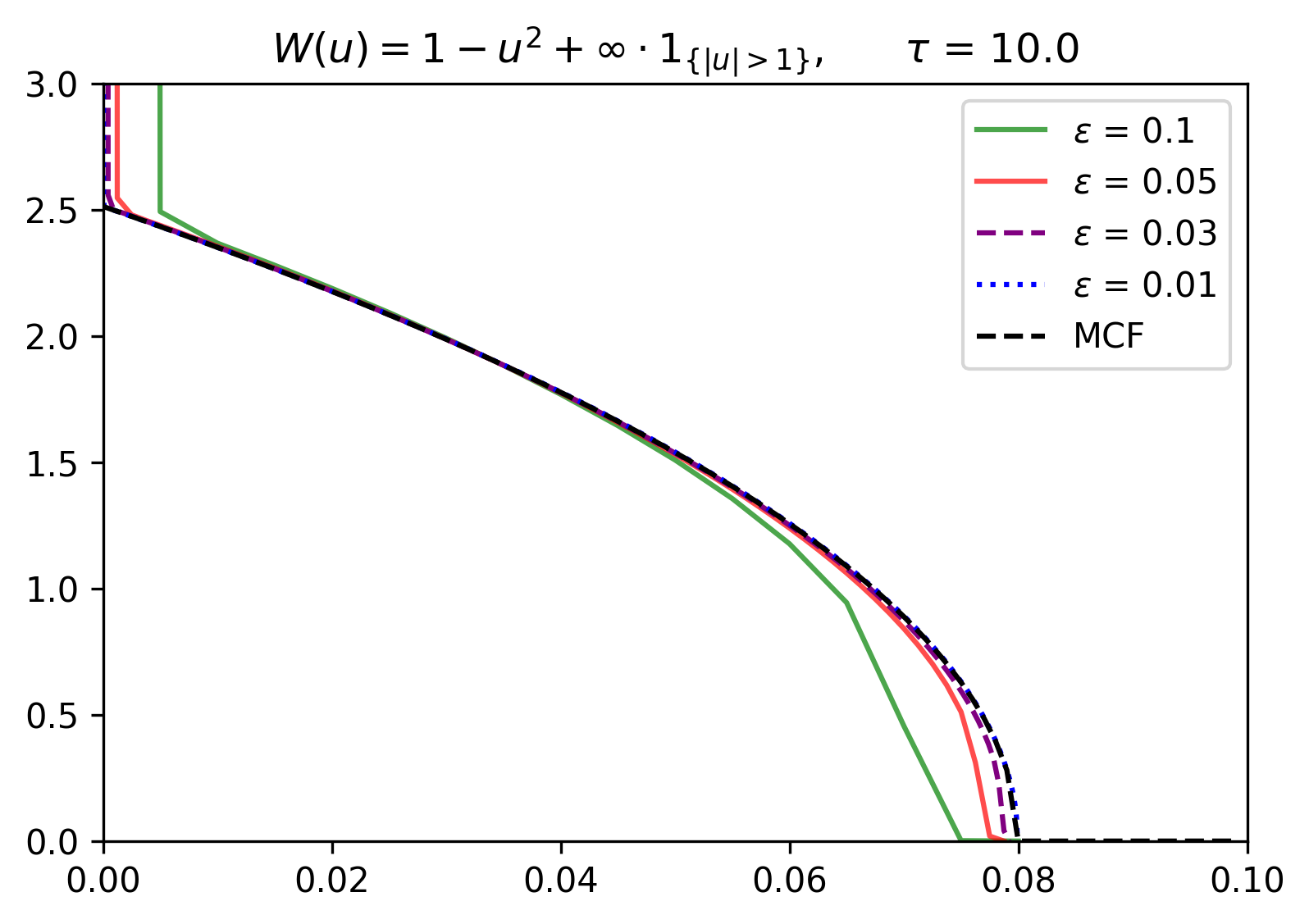}

\caption{{\bf Left:} The convex-concave splitting time-stepping scheme the standard potential $W(u) = (u^2-1)^2$. Times are computed according to an `effective step size' $0.25\cdot \eps^2$.  {\bf Right:} The convex-concave splitting time-stepping scheme the barrier potential $W(u) = 1-u^2 + \infty\cdot 1_{\{|u|>1\}}$, implemented as described in Appendix \ref{appendix singular numerics}. Times are computed according to an `effective step size' $0.5\cdot\eps^2$.
\label{figure other potentials}
}
\end{figure}

\section{Conclusion}\label{section conclusion}

In Lemma \ref{lemma energy descent}, the effective time-step size of the gradient descent scheme with convex-concave splitting for the Allen-Cahn equation may scale with different powers of $\eps$, depending on the `curvature' of the potential. Curiously, in both settings where we obtain the optimal scaling with $\eps$ analytically -- potentials with quadratic convex part and barrier potentials -- the effective time step size scales as $\eps^2$. The same scaling is observed numerically for the standard potential. 

It is tempting to conjecture that the neglected gradient term in Lemma \ref{lemma energy descent} may be responsible for this universally observed behavior. However, we note that the function $u_1 \equiv -1$ satisfies the estimate
\[
\frac\eps2 \,\|\nabla (u_1 - u_0)\|_{L^2(\Omega)}^2 + E_\eps(u_1) \leq E_\eps(u_0)
\]
for {\em any} function $u_0$. Thus, in the limit $\tau\to \infty$, Lemma \ref{lemma energy descent} would not imply any time step restrictions if $\bar c_W=0$, assuming that a constant minimizer is compatible with the boundary conditions. We therefore conjecture that a more subtle explanation is needed and that we cannot rely purely on the simple energy dissipation estimate.

In the following, we offer a simple heuristic for why the universal $\eps^2$-scaling would be expected, at least in a toy model. Consider the problem of minimizing the strictly convex functional
\[
F_{\tau, \kappa,\eps}(u) = \int_{-1}^1 \left( \frac\eps2\,|u'|^2 + \frac{W_{vex}(u) + W_{conc}'(\phi_\eps)\,u}\eps\right)(1+\kappa x)\,\dx + \frac1{2\tau}\|u -\phi_\eps\|_{L^2(-1,1)}^2
\]
where $\phi_\eps(x) = \phi(x/\eps)$ is a re-scaling of the optimal profile and $\eps >0$, $\tau \in (0,\infty]$ and $\kappa \in (-1/2, 1/2)$. For simplicity, we impose the boundary condition $u(\pm 1) = \phi_\eps(\pm 1)$ and observe that if $\kappa = 0$, then $u=\phi_\eps$ is the unique minimizer of $F_{\tau,\kappa,\eps}$ for any $\tau$. 

The additional parameter $\kappa$ serves as a proxy for the (mean) curvature of the transition interface: If $\kappa>0$, there is an incentive to move the transition where $u' \gg 1$ to the left; while $\kappa>0$ incentivizes transitions further to the right where it is `cheaper'. 

The presence of curvature $\kappa\neq 0$ changes both the shape and location of the optimal transition. Neglecting the second effect (and the boundary condition), we heuristically focus on the first:
\begin{align*}
\int_{-1}^1 \frac{\eps}2\,|\phi_\eps'|^2(x&-t)(1+\kappa x) \dx \approx \int_{-1}^1 \frac{\eps}2\,|\phi_\eps'|^2(x)(1+\kappa (x+t)) \dx
	 = \int_{-1}^1 \frac{\eps}2\,|\phi_\eps'|^2(x)(1+\kappa (x+t)) \dx\\
	 & = \int_{-1}^1 \frac{\eps}2\,|\phi_\eps'|^2(x)\,(1+\kappa x)\dx + \kappa t\int_{-1}^1 \frac{\eps}2   \,\big|\phi_\eps'|^2\dx = \int_{-1}^1 \frac{\eps}2\,|\phi_\eps'|^2(x)\,(1+\kappa x)\dx + \frac{c_W\,\kappa t}2
\end{align*}
where the approximate inequality relies on the fact that $\phi_\eps'$ essentially vanishes close to $\pm 1$. If $\phi'$ is odd/$W$ is even, the first term does not actually depend on $\kappa$. The equality holds exactly if $\phi$ transitions between the potential wells on a finite segment of the $x$-axis and $\eps$ is small enough. It is correct to order roughly $O(e^{-|t|/(\sqrt{W''(1}\eps)})$ if $W''(\pm 1)>0$ as $\phi'\to 0$ exponentially fast at $\infty$.

 If $W_{vex}(1) = W_{vex}(-1)$, the same analysis can be made for the minimization of 
\[
\int_{-1}^1 \frac{W_{vex}(\phi_\eps(x-t))}\eps \,(1+\kappa x) \dx\qquad \text{or equivalently}\quad \int_{-1}^1 \frac{W_{vex}(\phi_\eps(x-t)) - W_{vex}(1)}\eps  \,(1+\kappa x) \dx
\]
with an impact $\approx c_{conv}\kappa t$. The assumption does not pose any restriction since we can modify the splitting $W = W_{vex} + W_{conc} = (W_{vex}+\alpha x) + (W_{conc}-\alpha x)$ for our analysis without changing the algorithm: For constant right hand side, explicit and implicit time-step coincide. 

Assuming that $W_{conc}'$ and $\phi_\eps$ are odd functions, the `frozen' concave part gives us the contribution
\begin{align*}
\int_{-1}^1 \frac{W_{conc}'(\phi_\eps(x))}\eps&\,\phi_\eps(x-t)\,(1+\kappa x)\dx \approx \int_{-1}^1 \frac{W_{conc}'(\phi_\eps(x))}\eps\,\left(\phi_\eps(x) - \frac t\eps\,\phi_\eps'(x) + \frac {t^2}{\eps^2}\,\phi_\eps''(x)\right)(1+\kappa x)\dx\\
	&= \int_{-1}^1 \frac{W_{conc}'(\phi_\eps)\,\phi_\eps}\eps - \kappa t\,\frac{W_{conc}'(\phi_\eps)\,(x/\eps)\phi_\eps'}\eps + \frac{t^2}{\eps^2}\,\frac{W_{conc}'(\phi_\eps)\,\phi_\eps''}\eps \dx\\
	&= \int_{-1}^1 \frac{W_{conc}'(\phi_\eps)\,\phi_\eps}\eps\dx - \kappa\,c_{1, conc}t + c_{2,conc}\frac{t^2}{\eps^2}
\end{align*}
where $\phi_\eps^{(k)}(x) = \phi^{(k)}(x/\eps)$ (i.e.\ we first differentiate and the insert the argument) and
\begin{align*}
c_{2,conc} & \approx\int_{-\infty}^\infty W_{conc}'(\phi)\,\phi'' \dx = - \int_{-\infty}^\infty W_{conc}''(\phi)\,(\phi')^2\dx =  -\int_{-\infty}^\infty W_{conc}''(\phi)\,\sqrt{2\,W(\phi)}\,\phi' \dx\\
	&=- \sqrt 2\int_{-1}^1 W_{conc}''(z)\,\sqrt{W(z)}\dz \geq -\sqrt 2\int_{-1}^1 W''(z)\,\sqrt{W(z)}\dz= \frac1{\sqrt 2} \int_{-1}^1\frac{(W')^2}{\sqrt W}\dz>0.
\end{align*}
Taking $\tau = +\infty$ in this na\"ive approximation, the overall benefit of shifting the interface by a distance $t\ll \eps$ is roughly comparable to
\[
c\kappa t + c_{2,conc}\frac{t^2}{\eps^2} 
\]
for some constant $c$ which depends on the splitting $W_{vex}, W_{conc}$, but not $\eps$. The optimal shift $t$ would be $t = c\kappa \eps^2/(2\,c_{2,conc})$, and even smaller if $\tau<+\infty$. Naturally, the expansion is only valid if indeed $|t|\ll \eps$ -- which is consistent with the `optimal' choice of $t$ derived. We believe that this observation, despite its reliance on heuristic approximations, provides accurate geometric insight into the precise scaling of the convex-concave splitting approximation.

Let us note that $\eps^2$-fast motion can also be achieved by the semi-implicit scheme
\[
(1-\tau\,\Delta) u_{n+1} = - \frac{\tau}{\eps^2}\, {W'(u_n)}
\]
for time-step size $\tau < 2\eps^2/ [W']_{Lip}$ which treats the entire double-well potential $W$ explicitly, at least if $W'$ is Lipschitz-continuous (e.g.\ for $W= W_R$ as in Appendix \ref{appendix W_R}. In practice, the same holds for other potentials since numerical solutions $u_n$ take values in e.g.\ $[-2,2]$, even if the strict maximum principle which confines analytic solutions to $(-1,1)$ may be violated.

The draw-back of the semi-implicit scheme is that it requires the user to carefully choose $\tau$ -- but this can evidently be done easily in a principled way. The benefit over the splitting scheme is that a solver for $(1-\tau\Delta)$ can be pre-computed e.g.\ by LU factorization and that the same system is solved in every step. The same is true for potentials $W$ with quadratic convex part, but not in general.

We compare this to the guarantees for the semi-implicit scheme
\[
x_{n+1} = x_n - \tau\big(\nabla f(x_{n+1})+ \nabla g(x_n)\big)
\]
which achieves the guarantee
\[
(f+g)(x_{n+1}) \leq (f+g)(x_n) - \tau\left(1- \frac{[\nabla g]_{Lip}}2\,\tau\right) \,\|\nabla f(x_{n+1})+\nabla g(x_n)\|^2
\]
if $f$ is convex and $\nabla g$ is Lipschitz-continuous. The optimal step size is $\tau = 1/[\nabla g]_{Lip}$. In the case of the Allen-Cahn equation, we note that the gradient of the energy is $\eps$-small as demonstrated in analyses of phase-field approximations of Willmore's energy \cite{bellettini1993approssimazione, roger2006modified, dondl2017uniform, dondl2017phase} and that the gradient of the double-well contribution is $1/\eps$-Lipschitz. This, again, yields the $\eps^2$-scaling. 

\section*{Acknowledgements}

SW acknowledges the support of the NSF through grant DMS 2424801. SW is grateful to Selim Esedoglu for a helpful conversation concerning the thresholding scheme with general kernels.

\appendix 

\section{Smooth double-well potentials with quadratic convex part}\label{appendix W_R}

We construct a smooth doublewell potential $W$ such that $W(u) = u^2 + W_{conc}(u)$ where $W_{conc}$ is a differentiable concave function. 
The functions we consider are smoother versions of $W(u) = u^2  + 1 - 2|u|$ for which most analytic results apply without further modification.

\begin{example}
For $R, \beta, \gamma >0$, the potential
\[
W(u) = W_{\beta, \gamma, R}(u) := u^2 + \beta - \gamma \sqrt{R u^2 +1}
\]
satisfies $\lim_{u\to\pm \infty} W(u)/u^2 = 1$. As $\gamma>0$, the function
\[
W_{conc}(u) := \beta -\gamma\sqrt{Ru^2+1}
\]
is smooth and concave.
We see that
\[
0 = W'(u) = 2u - \frac{2\gamma Ru}{2\sqrt{Ru^2+1}} = \left(2 - \frac{\gamma R}{\sqrt{Ru^2+1}}\right)u = 0
\]
if $u=0$ and at $u = \pm 1$ if $\gamma = \frac{2\sqrt{R+1}}R$. The critical point at zero is a local maximum and 
\[
W(\pm 1) = 1 + \beta - \gamma \sqrt{R+1} = 0
\]
if $\beta = \gamma\sqrt{R+1}-1 = \frac{2(R+1)-1}R = 1 + 2/R$. At the origin, we have
\[
W(0) = \beta -\gamma = 1+ \frac2R - \frac{2\sqrt{R+1}}R = \frac{R+2 - 2\sqrt{R+1}}R = \frac{(\sqrt{R+1}-1)\sqrt{R+1}}R>0
\]
for all $R>1$. This ansatz therefore yields a one parameter family of doublewell potentials with quadratic convex part and potential wells at $\pm 1$, which we denote by $W_R$. We note that $W$ is even and that
\[
\gamma = \frac{2\sqrt{R+1}}R \quad\Ra\quad W''(1) = 2 - \gamma \,\frac{R}{\sqrt{R+1}}+ \gamma \,\frac{R^2}{(R+1)^{3/2}} = 2 - 2 + \frac{2R}{R+1}= \frac{2R}{R+1}>0.
\]
As $R\to\infty$, we observe that
\begin{align*}
\lim_{R\to \infty} W_R(u) &= \lim_{R\to\infty}\left(u^2 + 1 + \frac2R - \frac{2\sqrt{R+1}}R \sqrt{Ru^2+1}\right)\\
    &= u^2+1 - 2\,\lim_{R\to \infty}\left(\sqrt{\frac{R(R+1)}{R^2}u^2+ \frac{R+1}{R^2}}\right)\\
    &= u^2 + 1 - 2|u|,
\end{align*}
i.e.\ we can see $W_R$ as a smooth approximation to $\overline W$ -- see Figure \ref{fig:ws}.

\begin{figure}
    \centering
    \includegraphics[width = .33\textwidth]{./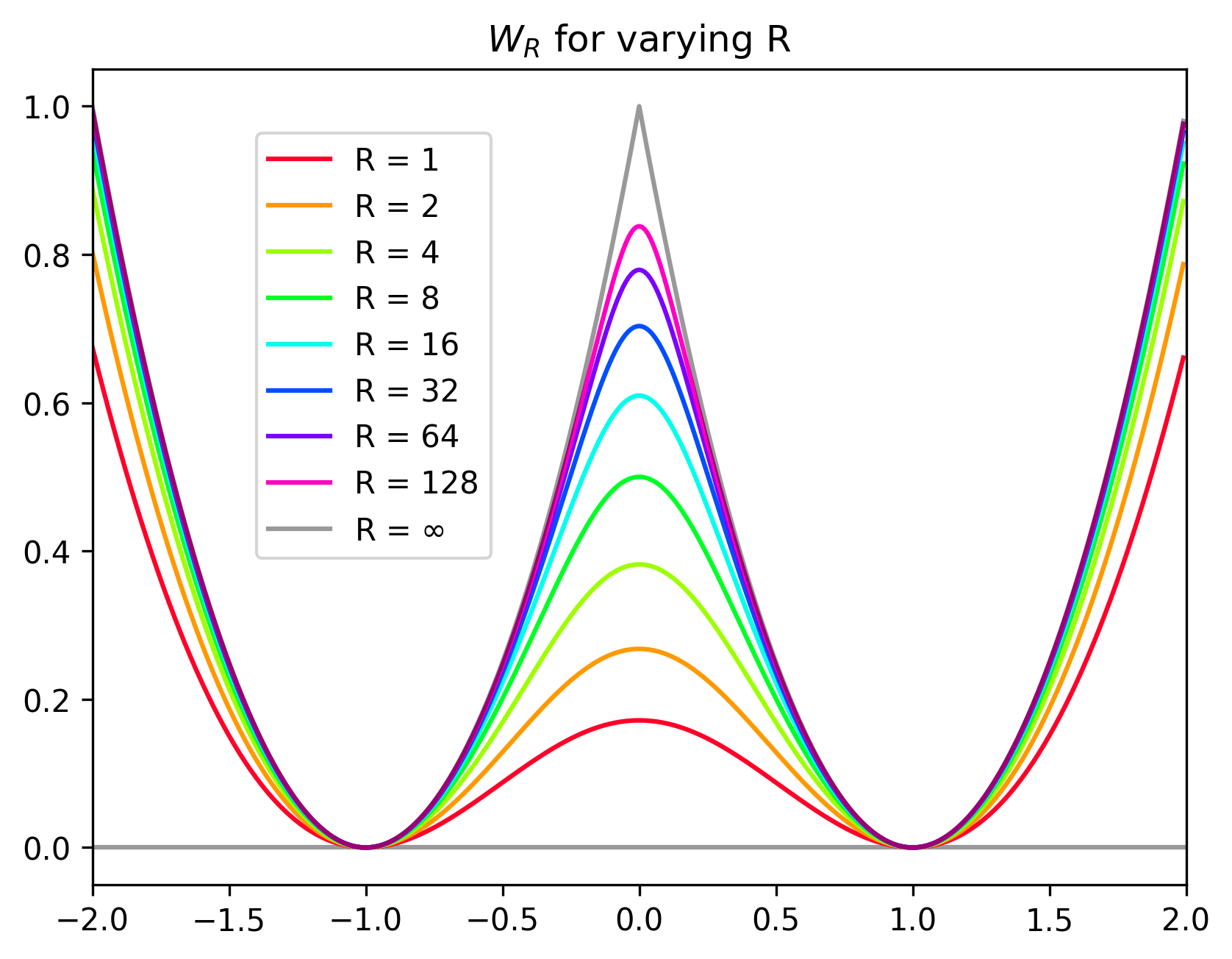}
    \includegraphics[width = .33\textwidth]{./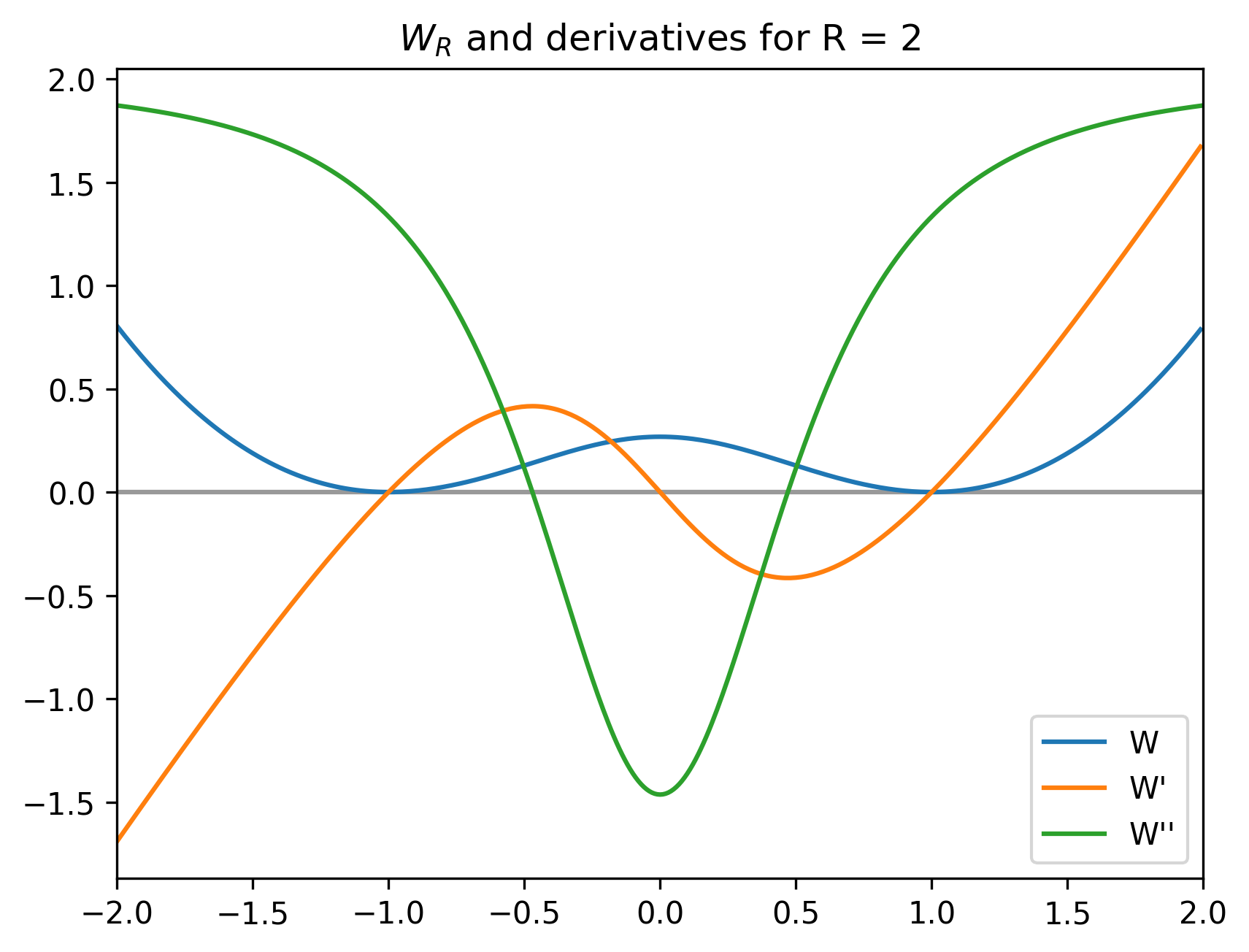}\
    
    \caption{{\bf Left:} The doublewell potential $W_R$ for varying values of $R$. {\bf Right:} $W_R$ and its first two derivatives for $R=1$.}
    \label{fig:ws}
\end{figure}
\end{example}

The fact that $W_R(u)\nearrow \overline W(u) =  (|u|-1)^2$ is no coincidence.

\begin{lemma}\label{lemma w bound}
Assume that $W(u) = u^2+ W_{conc}(u)$ where $W_{conc}$ is a concave $C^1$-function, $W\geq 0$ and $W(\pm 1) = 0$. Then $W(z) \leq (1-|z|)^2$ for all $z\in[-1,1]$.
\end{lemma}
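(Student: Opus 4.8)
The plan is to exploit concavity of $W_{conc}$ together with the two normalization conditions $W(1)=W(-1)=0$. Since $W(u)=u^2+W_{conc}(u)$ and $W(\pm 1)=0$, we have $W_{conc}(1)=W_{conc}(-1)=-1$. For a concave function, the graph on $[-1,1]$ lies above the chord joining $(-1,W_{conc}(-1))$ and $(1,W_{conc}(1))$; since both endpoint values equal $-1$, this chord is the constant function $-1$, so $W_{conc}(z)\ge -1$ for all $z\in[-1,1]$. That alone only gives $W(z)\ge z^2-1$, which is the wrong direction, so the point is to extract an \emph{upper} bound on $W_{conc}$ instead.

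For the upper bound I would use the supporting-line (subgradient) characterization of concavity at the endpoints. Concavity gives $W_{conc}(z)\le W_{conc}(1)+s_1\,(z-1)$ for any $s_1$ in the superdifferential of $W_{conc}$ at $1$, and similarly $W_{conc}(z)\le W_{conc}(-1)+s_{-1}\,(z+1)$ at $-1$. Here $W_{conc}\in C^1$, so $s_1=W_{conc}'(1)$ and $s_{-1}=W_{conc}'(-1)$. Now I use that $W\ge 0$ with $W(\pm 1)=0$ forces $\pm 1$ to be minima of $W$, hence $W'(1)=0$ and $W'(-1)=0$, i.e. $W_{conc}'(1)=-2$ and $W_{conc}'(-1)=2$. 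Plugging in $W_{conc}(1)=W_{conc}(-1)=-1$, the two supporting-line bounds become $W_{conc}(z)\le -1-2(z-1)=1-2z$ and $W_{conc}(z)\le -1+2(z+1)=1+2z$, so $W_{conc}(z)\le 1-2|z|$ for all $z\in[-1,1]$. Adding $z^2$ yields $W(z)\le z^2+1-2|z|=(1-|z|)^2$, as claimed.

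The one technical point to handle carefully is the justification that $W'(\pm 1)=0$: since $W\ge 0$ on all of $\R$ (not merely on $[-1,1]$) and vanishes at $\pm 1$, each of $\pm 1$ is a global minimizer of the $C^1$ function $W$, so its derivative there vanishes; this is exactly the reasoning already invoked in the introduction of the paper for $C^1$ potentials, so I would simply cite that. A second minor point: the superdifferential of a $C^1$ concave function is the singleton $\{W_{conc}'\}$, and the supporting-line inequality $W_{conc}(z)\le W_{conc}(a)+W_{conc}'(a)(z-a)$ is the standard first-order characterization of concavity (the reversed convexity inequality already recalled in Section~\ref{section general bound}); no genuine obstacle arises. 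The main conceptual step, and the only place where one might initially go astray, is realizing that one should use the \emph{two} supporting lines at the endpoints rather than the chord — the chord gives the trivial lower bound, the tangent lines give the desired upper bound, and their minimum $1-2|z|$ is precisely what is needed.
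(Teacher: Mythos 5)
Your proof is correct and is essentially the paper's argument: both use the global minimality of $W$ at $\pm 1$ to conclude $W_{conc}(\pm 1)=-1$ and $W_{conc}'(\pm 1)=\mp 2$, and then invoke concavity of $W_{conc}$ to deduce $W_{conc}(z)\leq 1-2|z|$. The paper phrases the concavity step as integrating the bound $W_{conc}'\geq -2$ on $[0,1]$, whereas you use the equivalent first-order (supporting-line) inequality at the endpoints, which is the same estimate packaged slightly more cleanly.
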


\begin{proof}
We observe that $W$ is minimal at $\pm 1$ and differentiable, so
\[
0 = W(u) = u^2 + W_{conc}(u) = 1 + W_{conc}(u) \qquad \forall\ u\in\{-1,1\}
\]
and
\[
0 = W'(u) = 2u + W_{conc}'(u) \qquad \Ra\quad W_{conc}'(u) = -2\,sign(u) \qquad\forall\ u\in \{-1,1\}.
\]
Since $W_{conc'}$ is non-increasing on $[0, \infty)$, we see that $W_{conc}' \geq -2$ on $[0,1]$ and thus 
\[
W_{conc}(z) = - \int_z^1 W_{conc}'(t)\dt + W_{conc}(1) \leq -1-\int_z^1(-2)\dt = 2(1-z)-1 = 1 - 2z \qquad\ \forall\ z\in [0,1],
\]
i.e.\ 
\[
W(u) = u^2 + W_{conc}(u) \leq u^2 + 1-2u\qquad \forall\ u\in [0,1].
\]
The case $u\in[-1,0]$ follows analogously.
\end{proof}

\section{The thresholding scheme with radially symmetric kernels}\label{appendix thresholding}

We briefly summarize the convergence result of \cite{ishii1995generalization, ishii1999threshold} for thresholding schemes with general kernel convolutions. We specialize the result of \cite[Theorem 3.3]{ishii1999threshold} to the setting of radially symmetric kernels, which was initially treated in the unfortunately hard to obtain article \cite{ishii1995generalization}.

Assume that $K$ is a measurable function on $\R^d$ and $K(Ox) = K(x)$ for all $x\in\R^d$ and $O\in O(d)$. Assume additionally that
\[
\int_{\R^d}K(x) \dx =1, \qquad \int_{\R^d}|x|^2\,K(x)\dx < +\infty, \qquad \int_{\R^{d-1}} K(\xi, 0) \big(1+\|\xi\|^2\big)\d\xi <\infty.
\]
This comprises assumptions (3.1) -- (3.4) of \cite{ishii1999threshold}, noting that
the continuity condition (3.3) for hyperplane integrals of $K$ in \cite{ishii1999threshold} is automatically satisfied for radially symmetric kernels since the integrals do not depend on the choice of hyperplane. Additionally, we require that
\[
\lim_{\rho\to 0^+} \sup_{r\in (0,\rho)}\left|\int_{B_{R(\rho)}(0_{d-1})} K\left(\xi, r \big(a + \xi^TS\xi\big)\right)\big(a + \xi^TS\xi\big)\d\xi - \int_{\R^{d-1}} K\left(\xi, 0\right)\,\big(a+ \xi^TS\xi\big)\d\xi\right| =0
\]
where $a\in \R$ and $S\in \R^{(d-1)\times (d-1)}_{sym}$ are arbitrary and $R(\rho)$ can be any function such that $R(\rho)\to +\infty$ and $\sqrt{\rho}\,R(\rho)\to 0$ as $\rho\to0^+$. This `localization' condition (3.7) of \cite{ishii1999threshold} implies consistency with MCF for quadratic graph approximations of smooth surfaces. Here $0_{d-1}$ denotes the origin in $\R^{d-1}$.

For the kernel $K(x) = \frac{\exp(\|x\|)}{4\pi\,\|x\|}$ on $\R^3$, we observe that
\begin{align*}
\int_{\R^3} K(x) \dx 
    &= 4\pi\int_0^\infty \frac{\exp(-r)}{4\pi\,r}r^2\dr
    =\int_0^\infty r\,\exp(-r)\dr 
    = \int_0^\infty \exp(-r)\dr 
    = 1\\
\int_{\R^3} K(x)\,\|x\|^2\dx 
    &= \int_0^\infty r^3\exp(-r)\dr 
    = \int_0^\infty 3r^2 \exp(-r)\dr 
    = 6\int_0^\infty r\,\exp(-r)\dr 
    = 6\\
\int_{\R^2}K(\xi,0)\,\big(1+\|\xi\|^2\big)\d\xi
    &= 2\pi \int_0^\infty \frac{\exp(-r)}{4\pi\,r}\,(1+r^2) \,r\dr
    = \frac12 \int_0^\infty (1+r^2) \exp(-r)\dr = \frac32,
\end{align*}
i.e.\ the conditions (3.1) -- (3.4) of \cite{ishii1999threshold} hold for $K$.
The consistency condition (3.7) of \cite{ishii1999threshold} can be established in two steps: For any fixed $a, S$ we have
\[
\sup_{r\in(0,\rho)}\left|\int_{\R^{2}\setminus B_{R(\rho)}(0)} K\left(\xi, r \big(a + \xi^TS\xi\big)\right)\big(a + \xi^TS\xi\big)\d\xi\right| \leq 2\pi \int_{B_{R(\rho)}^c}\frac{\exp(-r)}{4\pi r} \big(|a| + \|S\|_{op}r^2\big)\dr \to 0^+
\]
as $\rho\to 0$ since the integrand is in $L^1(\R^2)$ and $R(\rho)\to \infty$ as $\rho\to 0$. Additionally, we have
\begin{align*}
\limsup_{\rho\to 0^+}\sup_{r\in (0,\rho)} \left|\int_{\R^2} \big\{K(\xi,0) - K\big(\xi, r(a+\xi^TS\xi)\big)\big\} (a+\xi^TS\xi) \d\xi \right| \leq \lim_{\rho \to 0^+} \int_{\R^2} f_\rho(\xi) \d\xi 
\end{align*}
where 
\begin{align*}
f_\rho(\xi) &= \sup_{0<r<\rho} \big|K(\xi,0) - K\big(\xi, r(a + \xi^TA\xi)\big)\big|(|a| + \|S\|_{op}\|\xi\|^2\big)\\
    &\leq \left[K(\xi, 0) - K\big(\xi, \rho\big\{a + \|S\|_{op}\|\xi\|^2\big\}\big) \right]\,\big\{|a| + \|S\|_{op}\|\xi\|^2\big\}.
\end{align*}
The inequality follows since the function $s\mapsto \exp(-s)/s$ is monotone decreasing, so $s\mapsto K(\xi, s)$ is monotone decreasing. We note that $\lim_{\rho\to 0^+} f_\rho(x) = 0$ for all $x\in\R^3$ and $|f_\rho(x)| \leq C(1+\|x\|^2)\exp(-\|x\|)$, so $\int_{\R^2}f_\rho \dx \to 0^+$ as $\rho\to 0^+$ by the dominated convergence theorem. The two estimates together establish that the compatibility condition (3.7) of \cite{ishii1999threshold} holds for the kernel $K$.

We compute the velocity function as in \cite{ishii1999threshold} with $\Sigma\in\R^{3\times 3}_{sym}$ as a proxy for the second fundamental form and $n$ as a proxy for the normal of the evolving surface:
\[
v(\Sigma, n ) = \frac{-\frac12 \int_{\langle n\rangle ^\bot} \xi^T\Sigma\xi\,K(\xi)\,\d\xi }{\int_{\R^2}K(\xi,0)\,\d\xi} = - \int_{\langle n\rangle^\bot}\xi^T\Sigma\xi\,\frac{\exp(-\|\xi\|)}{4\pi\,\|\xi\|}\,\d\xi.
\]
We choose an orthonormal coordinate system $\{n, e_1, e_2\}$ and note that 
\begin{align*}
v(\Sigma, n) &= \frac{-1}{4\pi} \int_0^\infty\int_0^{2\pi} r\big(\cos\phi\, e_1+\sin\phi \,e_2)^T\,\Sigma \, r(\cos \phi\,e_1 + \sin\phi\,e_2) \frac{\exp(-r)}r\,r\dr\\
    &=\frac{-1}{4\pi} \left(\int_0^{2\pi}\cos^2\phi\,e_1^T\Sigma\,e_1 + \sin^2\phi\,e_2^T\Sigma\,e_2\,\d\phi\right) \left(\int_0^\infty\exp(-r)r^2\dr\right)\\
    &= - \frac1{4\pi} \cdot\pi \big(e_1^T\Sigma e_1 + e_2^T \Sigma e_2\big)\cdot 2
    = - \frac12\,\mathrm{tr}_{\langle n\rangle^\bot}\Sigma.
\end{align*}
If $\Sigma$ is indeed the second fundamental form of a surface and $n$ is its normal, this is precisely the negative of the (geometric) mean curvature, i.e.\ the average of its principal curvatures. For surfaces, the geometric mean curvature differs from the `analytic' mean curvature (the sum of the principal curvatures) by a factor two. The analytic mean curvature appears more naturally e.g.\ in the Allen-Cahn equation. 
By \cite[Theorem 3.3]{ishii1999threshold}, we find that the thresholding scheme indeed approximates a viscosity solution to mean curvature flow at the time-scale $\frac{\eps^2}2$, at least on the whole space $\R^3$.

\section{On the uniform convexity of power law functions}\label{appendix power law}

In this Section, we prove Lemma \ref{lemma power law}. The special cases $p=2$ and $p=4$ follow directly from algebraic manipulation. If $f_p(u) := |u|^p$, then
\begin{align*}
f_2(u+v) &= (u+v)^2
    = u^2 + 2uv+v^2
    = f_p(u) + f_p'(u)\,v + v^2
\end{align*}
and
\begin{align*}
f_4(u+v) &= (u+v)^4\\
	 &= u^4 + 4u^3v + 6u^2v^2 + 4uv^3 + v^4\\
	 &= f_4(u) + f_4'(u)\,v + v^2 \big(6u^2+4uv + v^2\big)\\
	 &=  f_4(u) + f_4'(u)\,v + v^2\left(\sqrt{6}\,u+\frac2{\sqrt 6}\,v\right)^2 + v^2\left(1- \frac{4}6\right)\,v^2\\
	 &\geq f_4(u) + f_4'(u)\,v + \frac13\,v^4.
\end{align*}
The inequality holds with equality if $v = -3u$. We will see in the proof of the general case that $v = -cu$ for $c\approx 2$ is indeed the most restrictive case also for general $p$.

\begin{proof}[Proof of Lemma \ref{lemma power law}]
If $u=0$, the inequality holds for any $\beta\leq 1$, so we may assume that $u\neq 0$. Dividing by $|u|^p$ and denoting $\xi = v/|u|$, we see that the inequality is equivalent to
\[
\big|1 + \sign(u)\xi\big|^p = \big|\sign(u) + \xi\big|^p \geq 1 + p\,\sign(u)\,\xi + \beta\,|\xi|^p \qquad \forall\ u\neq 0,\: \xi\in \R.
\]
Replacing $\zeta = \sign(u)\xi$, it reduces further to the claim that
\begin{equation}\label{eq improved bernoulli}
\big|1+ \zeta\big|^p \geq 1 + p\zeta + \beta\,|\zeta|^p\qquad\forall\ \zeta \in\R.
\end{equation}
This can be seen as an improved version of Bernoulli's inequality which attains the right kind of growth both at $\xi=0$ and as $|\xi|\to \infty$. 

We differentiate between three cases: (i) $\zeta\geq 0$, (ii) $\zeta \in[-1,0)$ and $\zeta \in(-\infty,-1]$. In the first case, the inequality reduces to
\[
\big(1+ \zeta\big)^p - 1 - p\zeta - \beta\,\zeta^p \geq 0.
\]
We note that the inequality holds with equality for $\zeta = 0$ independently of $\beta$ and 
\[
\frac{d}{d\zeta}\left\{\big(1+ \zeta\big)^p - 1 - p\zeta - \beta\,\zeta^p\right\} = p(1+\zeta)^{p-1} - p - \beta p\zeta^{p-1} = 0
\]
for $\zeta=0$. The second derivative satisfies
\[
\frac{d^2}{d\zeta^2}\left\{\big(1+ \zeta\big)^p - 1 - p\zeta - \beta\,\zeta^p\right\} = p(p-1) \left\{\big(1+\zeta\big)^{p-2}- \beta \zeta^{p-2} \right\}\geq 0
\]
for $\beta \leq 1$ since $p\geq 2$ implies that $(1+\zeta)^{p-2}\geq \zeta^{p-2}$. Thus the first derivative is increasing from zero, meaning that the function itself is increasing (and, in fact, convex with minimizer at $0$).

In the second case, we write $z= -\zeta$ and reduce the inequality to
\[
(1-z)^p \geq 1-pz + \beta z^p\qquad \forall\ z \in [0,1].
\]
We again note that the inequality holds as equality for $z=0$ and compute
\[
\frac{d}{dz} \big\{(1-z)^p - 1 + pz - \beta\,z^p\big\} = -p(1-z)^{p-1} + p - p\beta\,z^{p-1} = p \big(1- z^{p-1} - \beta (1-z)^{p-1}\big) \geq 0
\]
for $\beta \leq 1$ as $p-1\geq 1$. 
Thus \eqref{eq improved bernoulli} holds for $\zeta\in[-1,\infty)$. It remains to consider the third case $\zeta \in (-\infty, -1]$. Here, we will encounter sharper restrictions on $\beta$.

After replacing $z= -\zeta$ again, the inequality reads as
\[
(z-1)^p \geq 1 -pz + \beta\,z^p \qquad \LRa\quad \frac{(z-1)^p + pz -1}{z^p}\geq \beta \qquad \forall\ z\in [1,\infty)
\]
We can find admissible $\beta$ as
\begin{align*}
\beta(p) := \min_{z\geq 1} \frac{(z-1)^p + pz -1}{z^p} &= \min\left\{\min_{z\in[1,R]}\frac{(z-1)^p + pz -1}{z^p} , \min_{z\in[R,\infty)} \frac{(z-1)^p + pz -1}{z^p}\right\}\\
	&\geq \min\left\{\min_{z\in[1,R]}\frac{pz -1}{z^p} , \min_{z\in[R,\infty)} \frac{(z-1)^p}{z^p}\right\}.
\end{align*}
We note that
\[
\frac{pz -1}{z^p} \geq \frac{(p-1)z}{z^p} = (p-1)\,R^{1-p} \quad\forall\ z\in[1,R], \qquad \frac{(z-1)^p}{z^p} = \left(1- \frac1z\right)^p \geq \left(1-\frac1R\right)^p \quad\forall\ z\in [R, \infty).
\]
As the bound holds for any choice of $R>1$, we can evaluate at $R=2$ to obtain
\[
\beta(p) \geq \min\{(p-1)\,2^{1-p}, \:2^{-p}\} = 2^{-p}. \qedhere
\]
\end{proof}

This bound scales almost optimally as $p\to\infty$ as
\[
\beta(p) = \min_{z\geq 1} \frac{(z-1)^p + pz -1}{z^p}  \leq\frac{(2-1)^p + p\cdot 2 -1}{2^p} = \frac{2p}{2^p} = p\,2^{1-p}.
\]

\section{Calculations for Example \ref{example obstacle}}\label{appendix obstacle}
We make the ansatz
\[
u(s) = \begin{cases} 1 & s < r_i\\ a + b\,G_d(s) - \frac1{2d\,\eps^2}\,s^2 & r_i < s <r\\ c+e\,G_d(s) + \frac1{2d\,\eps^2}s^2 & r < s < r_o\\ -1 &s > r_o\end{cases}
\]
for coefficients $a, b, c, e$ which make $u$ $C^1$-regular at the radii $r_i, r, r_o$. Note that also $r_i, r_o$ are problem parameters, leading to the system
\[
\begin{cases}
a + b \,G_d(r_i) - \frac1{2d\,\eps^2} \,r_i^2 & = 1\\
b\,G_d'(r_i) - \frac{1}{d\,\eps^2}\,r_i &= 0\\
a + b \,G_d(r) - \frac1{2d\,\eps^2} \,r^2 & = c + e \,G_d(r) + \frac1{2d\,\eps^2} \,r^2\\
b\,G_d'(r) - \frac{1}{d\,\eps^2}\,r &= e\,G_d'(r) + \frac1{d\,\eps^2}r\\
c + e \,G_d(r_o) + \frac1{2d\,\eps^2} \,r_o^2 & = -1\\
e\,G_d'(r_o) + \frac{1}{d\,\eps^2}\,r_o &= 0\\
\end{cases}
\]
of six equations in six unknowns $a,b, c,e, r_i, r_o$. Note that $G_d'(s) = \frac1{d\omega_d} s^{1-d}$ for all $d\geq 2$, so the second and sixth equation simplify to
\[
0 = b\,\frac1{d\omega_d} \,r_i^{1-d} - \frac{1}{d\,\eps^2}\,r_i \qquad \Ra \quad b = \frac{d\,\omega_d}{d\,\eps^2}\,r_i^d = \frac{\omega_d}{\eps^2}\,r_i^d \qquad \text{and analogously} \quad e = - \frac{\omega_d}{\eps^2}\,r_o^d.
\]
Inserting this into the first and fifth equation, we obtain
\[
a = 1 - b\,G_d(r_i) + \frac1{2d\,\eps^2}\,r_i^2 = 1 - \frac{\omega_d}{\eps^2} \,r_i^d\,\frac{-1}{d(d-2)\omega_d}r_i^{2-d} + \frac1{2d\,\eps^2}\,r_i^2 =
1 +\frac{r_i^2}{d\,\eps^2}\left(\frac1{d-2} + \frac12\right)
\]
in dimension $d\geq 3$ and analogously
\[
a = 1 + \frac{r_i^2}{2(d-2)\,\eps^2}, \qquad c = -\left(1 + \frac{r_o^2}{2(d-2)\,\eps^2}\right).
\]
Having determined $a,b,c,e$ for given $r_i, r_o$, we need to exploit the smoothness of $u$ at the contact sphere $\{s=r\}$ to find $r_i, r_o$. Again, we start with the simpler condition for the derivative
\[
\left(\frac{1}{d\,\eps^2}\,r_i^dr^{-d} - \frac1{d\,\eps^2}\right)r =  b\,G_d'(r) - \frac{1}{d\,\eps^2}\,r = e\,G_d'(r) + \frac1{d\,\eps^2}r = \left(\frac{-1}{d\,\eps^2}\left(\frac{r_o}r\right)^d + \frac1{d\,\eps^2}\right) r,
\]
which yields that $(r_i/r)^d + (r_o/r)^d = 2$. The continuity condition becomes
\begin{align*}
1 + \frac{r_i^2}{2(d-2)\eps^2} - \frac{1}{d(d-2)\eps^2} r_i^dr^{2-d} - \frac1{2d\,\eps^2} \,r^2  = -\left(1 + \frac{r_o^2}{2(d-2)\eps^2} \right) + \frac{1}{d(d-2)\eps^2} r_o^dr^{2-d} + \frac1{2d\,\eps^2} \,r^2 
\end{align*}
or equivalently 
\[
2 + \frac 1{2(d-2)\eps^2}\,(r_i^2 + r_o^2)  -\frac{1}{d(d-2)\eps^2}\left\{ \left(\frac{r_i}r\right)^d + \left(\frac{r_o}r\right)^d\right\}r^2 = \frac2{2d\,\eps^2}r^2.
\]
Simplifying by the smoothness identity $(r_i/r)^d + (r_o/r)^d = 2$ and multiplying by $2d(d-2)\eps^2)$, this simplifies to 
\[
4d(d-2)\eps^2 + d(r_i^2 + r_o^2) - 4 r^2 = 2(d-2)\,r^2
\]
and overall
\[
r_i^2 + r_o^2 = 2r^2 - 4(d-2)\eps^2, \qquad r_i^d + r_o^d = 2r^2.
\]
We can thus find $r_i$ as the root of the auxiliary function
\[
\xi_\eps(s) = \sqrt{2\,r^2 - 4(d-2)\eps^2 - s^2} - \sqrt[d]{2r^d - s^d}.
\]
We observe that
\[
\xi_\eps(0) = \sqrt{2r^2 - 4(d-2)\eps^2} - \sqrt[d]{2}r \approx (\sqrt{d}-\sqrt[d]{2})r>0, \qquad \xi_\eps(r) = \sqrt{r^2 - 4(d-2)\eps^2} - r<0
\]
if $\eps$ is a small positive number. In particular, $\xi_\eps$ is well-defined on $[0,r]$ and guaranteed to have a root which can be found by bisection. Note that
\begin{align*}
\xi_\eps'(s) &= s^{d-1}\big(2r^d-s^d\big)^\frac{1-d}d - s\big(2r^2 - s^2- 4(d-2)\eps^2 \big)^{-\frac12}\\
	&<   s^{d-1}\big(2r^d-s^d\big)^\frac{1-d}d - s\big(2r^2 - s^2 \big)^{-\frac12}\\
	&= \left(\frac{2r^d - s^d}{s^d}\right)^\frac{1-d}d - \left(\frac{2r^2 - s^2}{s^2}\right)^{-\frac12}
	\quad\leq 0
\end{align*}
i.e.\ $\phi_\eps$ is strictly monotone decreasing on $(0,r)$. In particular, the radii $r_i, r_o$ are uniquely determined. To see that $\xi_\eps'<0$, note that
\[
\left(2\zeta^d-1\right)^\frac{d-1}d \geq (2\zeta^2-1)^{1/2}\qquad\forall\ \zeta \in (1,\infty)
\]
where $\zeta$ serves as the proxy for $r/s$ and we took inverses. This inequality holds true since $\zeta^d\geq \zeta^2$ for $d\geq 2$ and $\zeta\geq 1$, and also $z^{(d-1)/d}\geq z^\frac12$ for $z\geq 1$ and $d\geq 2$.

\begin{figure}
\includegraphics[width =.24\textwidth]{./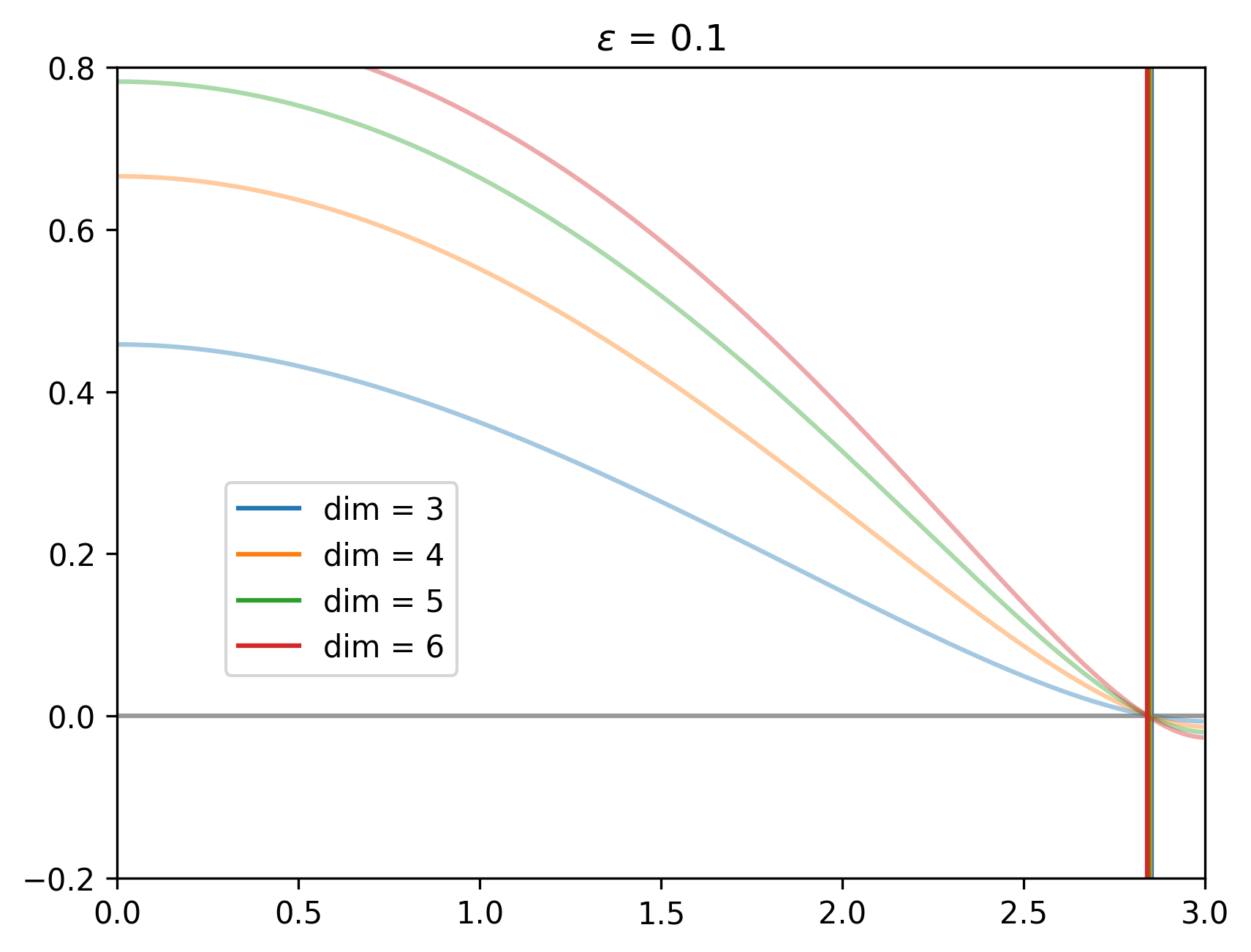}\hfill
\includegraphics[width =.24\textwidth]{./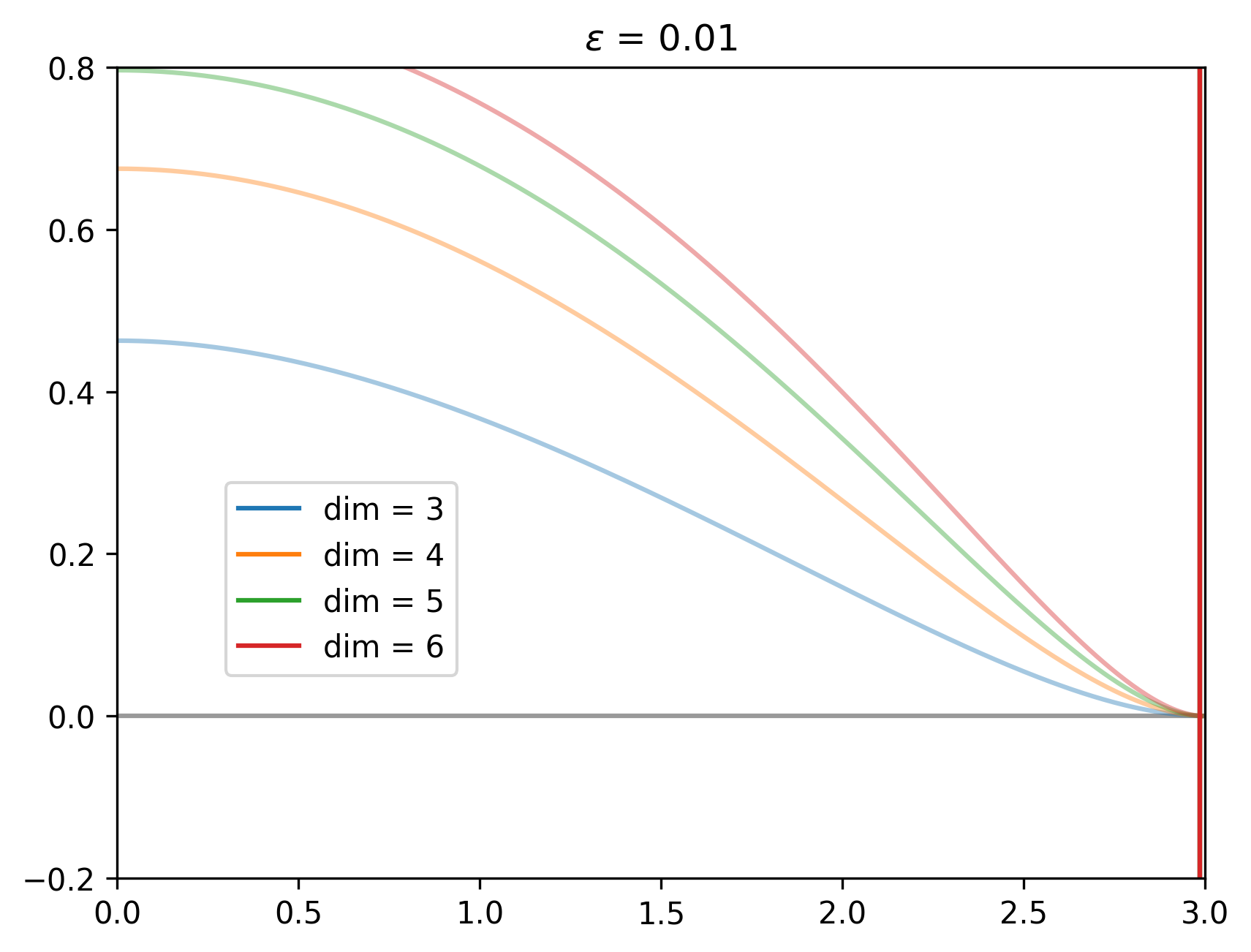}\hfill
\includegraphics[width =.24\textwidth]{./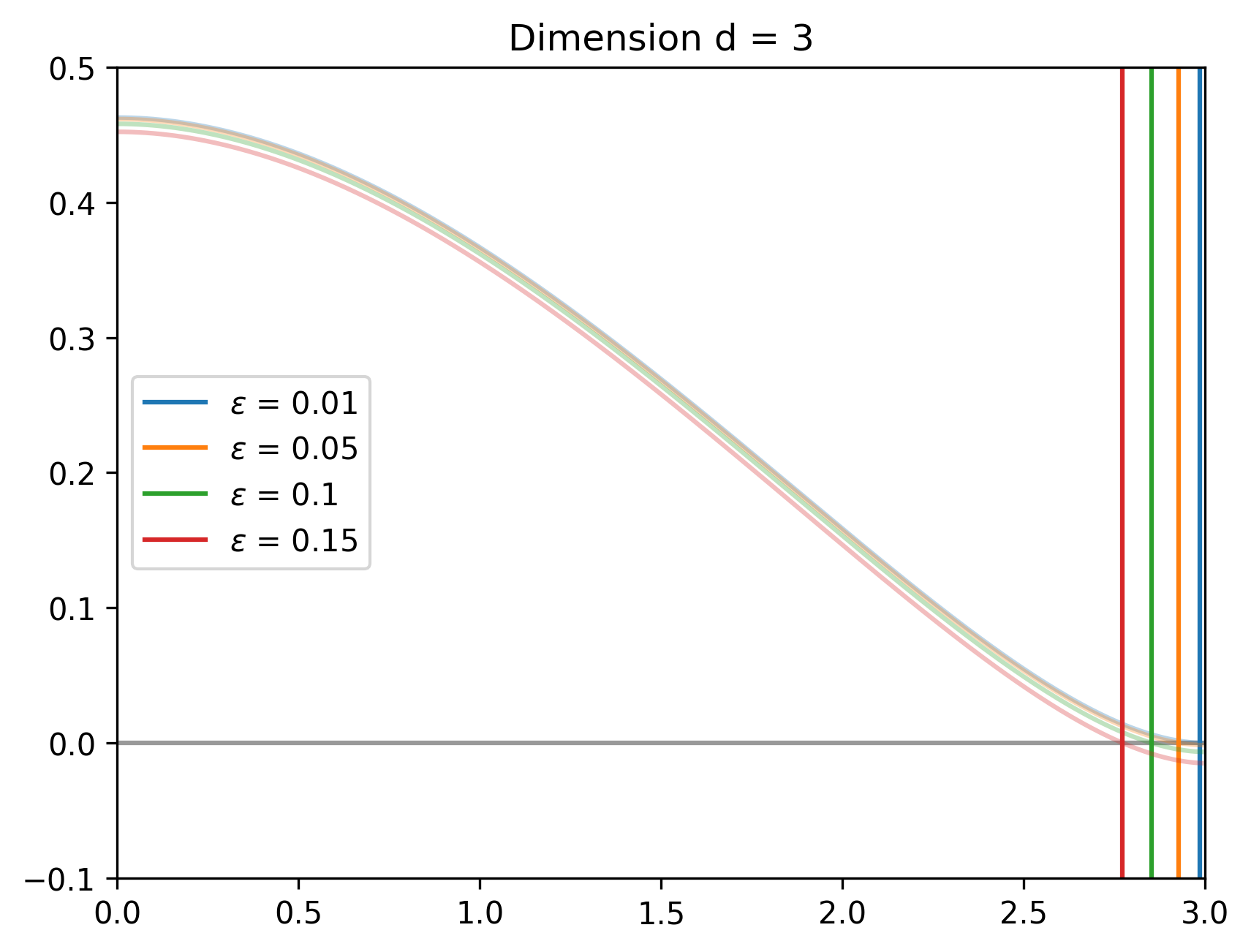}\hfill
\includegraphics[width =.24\textwidth]{./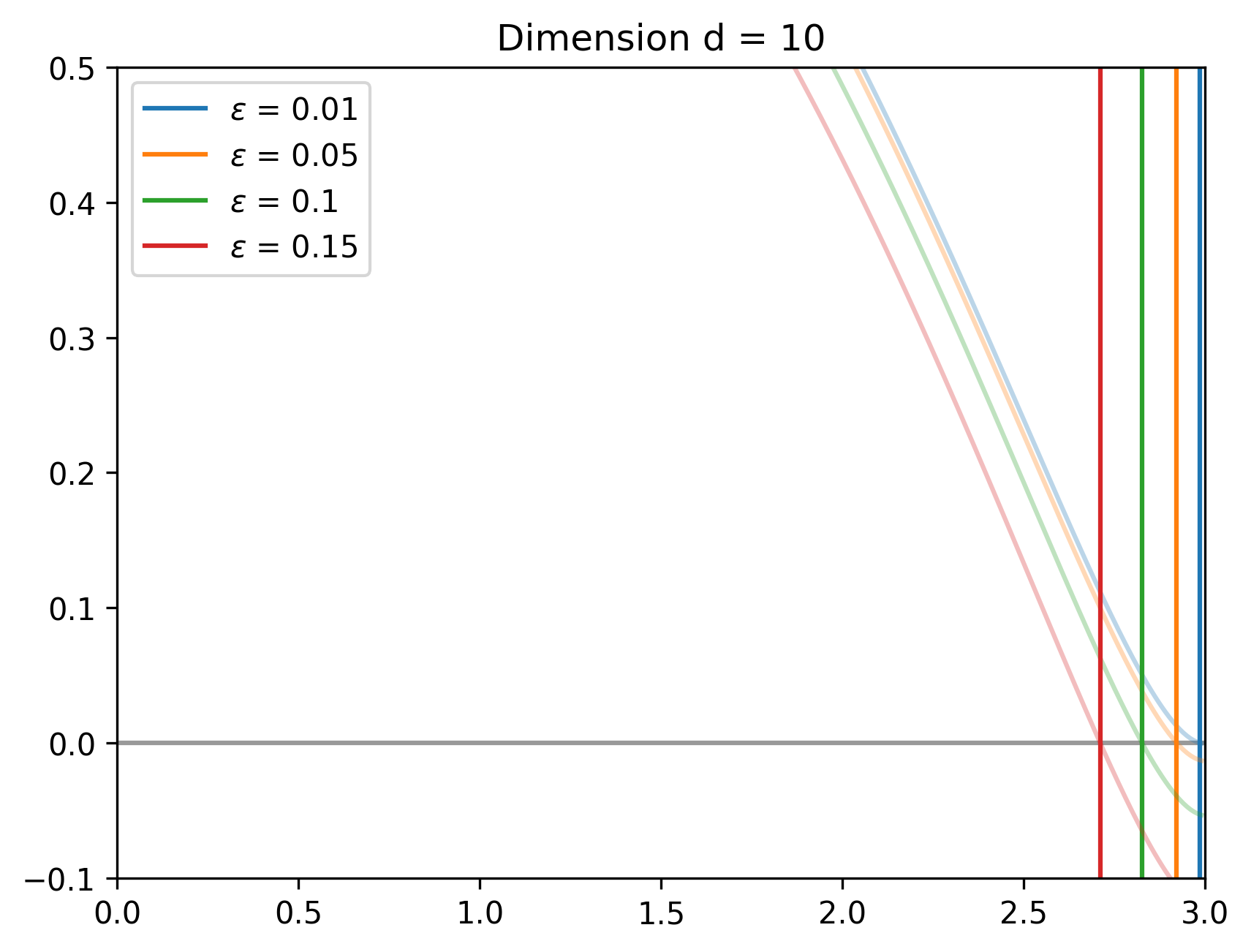}\hfill
\caption{
The auxiliary function $\xi_\eps$ for fixed $\eps$ and varying dimension (left two) and fixed dimension, and varying $\eps$ (right two). The vertical lines represent $r_i$, the intersection with the $x$-axis.
\label{figure phi}
}
\end{figure}

Knowing $r_i$, also $r_o$ can be found e.g.\ as $\sqrt[d]{2r^d -r_i^d}$, and the coefficients $a, b, c, e$ can be computed:
\[
u(s) = \begin{cases} 1 &s<r_i\\ 1+ \frac{r_i^2}{2(d-2)\eps^2} - \frac{r_i^d}{d(d-2)\eps^2}\,s^{2-d} - \frac1{2d\,\eps^2}s^2 & r_i < s<r\\
	-1- \frac{r_o^2}{2(d-2)\eps^2} + \frac{r_o^d}{d(d-2)\eps^2}\,s^{2-d} + \frac1{2d\,\eps^2}s^2 & r<s<r_o\\ -1 & s>r_o.\end{cases}
\]
In particular, $u$ is monotone decreasing in $s$, concave on $(r_i, r)$ and convex on $(r, r_o)$ -- see also Figure \ref{figure obstacle}.
Notably, there is no difference in the solutions to the time-stepping scheme between the Dirichlet boundary condition $-1$ and the homogeneous Neumann boundary conditions if $R> r_o$. Moreover, the containing domain $\Omega$ makes no difference and does not have to be radial, as long as $\overline{B_{r_o}}\subseteq\Omega$. 

Our primary interest is to find $r_{new} = u^{-1}(0)$ to see how quickly the circle of radius $r$ shrinks. Clearly, $r_{new} \in (r_i, r)$, so if $r_i$ is close to $r$, then so is $r_{new}$. It seems plausible that this should be the case, given that $\lim_{\eps\to 0^+} \phi_\eps(r) =0$, suggesting heuristically that $r_{i,\eps}\to r$ as $\eps\to 0$. However, it is not immediately clear which exponent $m$ governs the speed of convergence $r_{i,\eps} - r = O(\eps^m)$ (if any).

Note that
\begin{equation}\label{eq obstacle derivative at zero}
u'(s) = \frac{1}{d\,\eps^2}\left(\left(\frac{r_i}{s}\right)^{d} - 1\right)s\qquad \forall\ s\in (r_i, r)
\end{equation}
and the expression on the right is non-positive for all $s\geq r_i$. Hence
\begin{align*}
2 &\geq u(r_i) - u(r)
	=-\int_{r_i}^{r}u'(s)\ds
	= r_i^2 \int_{r_i}^r \frac{1}{d\,\eps^2}\left(1-\left(\frac{r_i}{s}\right)^{d} \right)\frac s{r_i}\,\frac1{r_i}\ds
	= \frac{r_i^2}{d\eps^2}\int_1^{r/r_i}\left(1-\sigma^{-d}\right)\sigma\d\sigma.
\end{align*}
Observe that the integrand $\psi_d(\sigma)= (1-\sigma^{-d})\sigma$ satisfies
\[
\psi_d(1) =0, \qquad \psi_d'(\sigma) = d\,\sigma^{-d-1}\sigma  + (1-\sigma^{-d}), \qquad \psi_d'(1) = d,
\]
so 
\[
2 \geq \frac{r_{i,\eps}^2}{d\eps^2}\, \int_1^{r/r_{i,\eps}}d(\sigma-1) + O\big( (\sigma-1)^2\big)\d\sigma \geq\frac{r_{i,\eps}^2}{d\eps^2}\,\frac d2 \left(\frac r{r_{i,\eps}}-1 + err\right)^2 = \frac{(r- r_{i,\eps})^2}{2\eps^2} + err
\]
where $err$ is an error term which we may neglect for small $\eps$ at the cost of slightly more restrictive constants. We thus achieve the estimate $r-r_{i,\eps} \leq 2\eps$, up to a small error. The estimate is fairly close. The optimal profile $\phi$ for $W$ satisfies
\[
\frac{(\phi')^2}2 = 1-|\phi| \qquad \Ra\quad \phi' = \sqrt{2 (1-\phi)} \qquad\Ra \quad -\frac{d}{dx} \sqrt{2(1-\phi)} =  \frac{\phi'}{\sqrt{2(1-\phi)}} =1
\]
for $x>0$. Since $\phi(0) = 0$, we have
\[
\sqrt{2(1-\phi(x))} = \sqrt{2} -x \qquad\Ra\quad 1-\phi(x) = \left(1- \frac x{\sqrt 2}\right)^2 \qquad \Ra\quad \phi(x) = 1 - \left(1- \frac x{\sqrt 2}\right)^2 = \sqrt{2}x - \frac{x^2}2
\]
close to zero. Specifically, the solution
\[
\phi(x) = \begin{cases} \sqrt 2\,x- \sign(x)\,\frac{x^2}2 & |x|\leq \sqrt 2\\ \sign(x) & |x|\geq \sqrt 2\end{cases}
\]
is $C^{1,1}$-smooth on $\R$ and transitions between the potential wells on the finite segment $[-\sqrt 2, \sqrt 2]$, so the `optimal' $u$ should transition between the potential wells on a segment of width $2\sqrt 2\eps$. We compare the solution $u$ to an optimal transition layer centered at the same radius $r_{new}$ with length-scale $\eps$ in Figure \ref{figure comparison to optimal}.

Since $r- 2\eps < r_{i,\eps} < r_{new,\eps} < r$, this yields the additional guarantee that $|r_{new,\eps}-r| < 2\eps$, i.e.\ the motion is $\eps$-slow. No better result can be expected from estimating $r_{i,\eps}$ since the transition between the potential wells $1,-1$ must happen on a length-scale $r_{o,\eps} - r_{i,\eps}\sim\eps$ for phase-fields of moderate energy. However, we see in Figure \ref{figure radii varying eps} that also here, the correct scaling for the time step size should be $O(\eps^2)$, not $O(\eps)$. 

The strategy is similar as in Section \ref{section conclusion}, using (radial) {\em inner} variations of the optimal function. Here, these arguments can be made rigorous more easily. Specifically, we use that $u\equiv 1$ on $[0, r_i]$ and $u\equiv -1$ on $[r_o, \infty)$, meaning that we can easily `shift' the transition to the left or right by small amounts without violating boundary conditions. Since $u$ is smooth enough, the inner variation induces a smooth curve of $H^1$-functions taking values in $[-1,1]$.

Consider $w$ to be the function of shape $u$ with zero level set centered at $r$ and $w_t = w(s+t)$ in radial variables. Since $\sign(u_0)w_t \equiv 1$ outside of $B_{r_o-t}\setminus B_{r_i-t}$, the energy of the time-step functional obeys
\begin{align*}
\int_\Omega \frac\eps2 &\,\|\nabla w_t\|^2 + \frac{1-\sign(u_0)\,w_t}\eps\dx \\
	&= d\,\omega_d\int_{r_i-t}^{r_o-t}\left(\frac\eps2\,\big|w'(s+t)\big|^2 + \frac{1 - \sign(u_0(s))\,w(s+t)}\eps\right)s^{d-1}\ds+\big|\big(B_R\setminus B_{r_o-t}\big) \cup B_{r_i-t}\big|\\
	&= d\,\omega_d \int_{r_i}^{r_o} \left(\frac\eps2 \,\big|w'(s)\big|^2 + \frac{1-\sign(u_0(s-t))\, w(s)}\eps \right)(s-t)^{d-1}\ds+\omega_d\,\frac{R^d + (r_i-t)^d-(r_o - t)^{d}}\eps
\end{align*}
where $\omega_d$ is the Lebesgue volume of the $d$-dimensional unit ball. We have $u = w_t$ for some $t\in (-2\eps, 0)$ where the derivative of the expression is zero, i.e.\
\begin{align*}
0 &= d\omega_d \int_{r_i}^{r_o} \left(\frac\eps2 \,\big|w'(s)\big|^2 + \frac{1-\sign(u_0(s-t)) \,w(s)}\eps\right)(d-1)\,(s-t)^{d-2}\ds\\
	&\hspace{2cm}+2d\omega_d\,\frac{w(r+t)}{\eps} + d\omega_d\,\frac{(r_o-t)^{d-1}-(r_i-t)^{d-1}}\eps.
\end{align*}
The factors $d\omega_d$ in all terms can be cancelled, so
\[
 \frac{w(r+t)}\eps = - \frac{d-1}2\int_{r_i}^{r_o} \left(\frac\eps2 \,\big|w'(s)\big|^2 + \frac{1-\sign(u_0(s-t)) \,w(s)}\eps\right)\,(s-t)^{d-2}\ds - \frac{(r_o-t)^{d-1}-(r_i-t)^{d-1}}{2\eps}.
\]
It is easy to see that the expression on the right hand side is uniformly bounded independently of $\eps$ in terms of the energy and $(r_o-r_i)/\eps$. Accordingly, also the expression on the left must be uniformly bounded. From the explicit expression \eqref{eq obstacle derivative at zero}, we can see that $w'(s) = O(1/\eps)$ with a lower bound of the same order for $s$ close to $r$, so $w(r+t)/\eps = O(t/\eps^2)$ with a lower bound of the same order. It follows that $t=O(\eps^2)$.

\section{Numerical approximation of Allen-Cahn equation with singular potential}\label{appendix singular numerics}
Let us briefly discuss how we numerically approximate the Allen-Cahn equation
\[
	\partial_t u = \Delta u - \frac{W'(u)}{\eps^2}, \qquad W(u) = \infty \cdot 1_{\{|u|>1\}} + W_{conc}(u) = \infty \cdot 1_{\{|u|>1\}} + 1-u^2
\]
on the unit square with periodic boundary conditions. The convex-concave splitting algorithm corresponds to iteratively minimizing
\[
\int_{(0,1)^2} \frac12\,\|\nabla u\|^2 + \frac{W_{conc}'(u_n)}{\eps^2}\,u+ \frac1{2\tau}\,(u-u_n)^2\dx
\]
under the constraint $-1\leq u\leq 1$. 
We use the Operator Splitting Quadratic Program (OSQP) algorithm \cite{Stellato2020}, which is designed to solve linearly constrained quadratic optimization problems of the form
\[
	\min_{x\in\R^N} \frac{1}{2} x^T P x + q^T x\text{ s.t. } L \preceq Ax \preceq U.
\]
The matrix $P \in \R^{N\times N}$ and the vector $q\in \R^N$ define a quadratic problem and $L,U\in \R^M$ and $A\in \R^{M\times N}$ define linear constraints. In our case, the quadratic contribution $x^TPx/2$ corresponds to a discretization of
\[
\frac12\int \|\nabla u\|^2 + \frac1\tau\,u^2 \dx = \frac12\int \left(-\Delta u + \frac1\tau\,u\right)u \dx
\]
on a grid $u = (u_{ij})_{i,j=1}^m$ and the linear contribution $q^Tx$ comprises a discrete approximation 
\[
\frac1{m^2}\sum_{i, j=1}^m \left(\frac{W_{conc}'(u_{n, ij})}{\eps^2}- \frac{u_{n,ij}}\tau\right)u_{ij}
 \qquad\text{of}\quad \int \left(\frac{W_{conc}'(u_n)}{\eps^2} - \frac1\tau\,u_n\right)u\dx.
\]
The constraint matrix $A$ is an identity matrix and the vectors $L, U$ have constant entries $-1$ and $1$ respectively, i.e.\ $ -1 \leq u_{ij}\leq 1$ for all $i, j=1,\dots, m$.

We use our own implementation of the OSQP algorithm where we use the indirect
method described in \cite[Section 3.1]{Stellato2020} to solve the linear
systems that arise in the iterations of the algorithm as they can be
diagonalized efficiently using the FFT algorithm. 
The key contribution of our own implementation compared to available solvers is that we do not require a $N\times N$-matrix $P$ (with $N=m^2$ and rearrangement of the square array $u$), but only a linear operator $x\mapsto Px$ which can be implemented efficiently by the fast Fourier transform (FFT) for the map $u\mapsto -\Delta u + \frac1\tau\,u$.

Also in this experiment, we selected the spatial resolution $512\times 512$.

\bibliographystyle{alpha}
\bibliography{./bibliography.bib}

\end{document}